\documentclass{amsart}
\usepackage{amsmath}
\usepackage{amsthm}
\usepackage{amsfonts}
\usepackage{amssymb}
\usepackage{enumerate}
\newtheorem{theorem}{Theorem}
\newtheorem{lemma}[theorem]{Lemma}
\newtheorem{prop}[theorem]{Proposition}
\newtheorem{corollary}[theorem]{Corollary}

\theoremstyle{theorem}
\newtheorem{definition}[theorem]{Definition}
\theoremstyle{definition}

\theoremstyle{remark}
\newtheorem{remark}[theorem]{Remark}
\theoremstyle{remark}\newtheorem{claim}{Claim}
%


\newcommand{\DD}{{\mathbb D}}
\newcommand{\OO}{{\mathcal O}}

\newcommand{\PP}{{\mathbb P}}

\newcommand{\dD}{{\mathcal  D}}
\newcommand{\GG}{{\mathbb G}}

\newcommand{\NN}{{\mathbb N}}

\newcommand{\RR}{{\mathbb R}}

\newcommand{\CC}{{\mathbb C}}

\newcommand{\TT}{{\mathbb T}}
\newcommand{\XX}{{\mathbb X}}
\newcommand{\YY}{{\mathbb Y}}

\newcommand{\la}{\lambda}

\DeclareMathOperator{\Aut}{Aut} \DeclareMathOperator{\id}{id}
 
 \DeclareMathOperator{\re}{Re}

\DeclareMathOperator{\supp}{supp}


\renewcommand{\phi}{\varphi}

\hyphenation{plu-ri-po-lar} \hyphenation{pluri-sub-harmonic}


\begin{document}

\title{Three-point Nevanlinna Pick problem in~the~polydisc}

\address{Institute of Mathematics, Faculty of Mathematics and Computer Science, Jagiellonian University,  \L ojasiewicza 6, 30-348 Krak\'ow, Poland}
\author{\L ukasz Kosi\'nski}\email{lukasz.kosinski@im.uj.edu.pl}

\thanks{Partially supported by the Ideas Plus grant 0001/ID3/2014/63 of the Polish Ministry of Science and Higher Education}

\keywords{Pick interpolation problem, Nevanlinna problem, extremal mappings}
\subjclass[2010]{32E30 (30E05)}

\maketitle

\begin{abstract}

It is very elementary to observe that functions interpolating an extremal two-point Pick problem on the polydisc are just left inverses to complex geodesics.

In the present article we show that the same property holds for a three-point Pick problem on polydiscs, i.e. it may be expressed it in terms of three-complex geodesics. Using this idea we are able to solve that problem obtaining formulas and a uniqueness theorem for solutions of extremal problems. In particular, we determine a class of rational inner functions interpolating that problem.

Possible extensions and further investigations are also discussed.
\end{abstract}

\section{Introduction}

In this paper we shall be dealing with Pick and Nevanlinna-Pick interpolation problems in the polydisc. The precise definitions and statements are given below. Before we formulate them we shall introduce some preliminary notation: $\DD$ denotes the unit disc in the complex plane, $\TT$ stands for the unit circle, and $H^\infty(\DD^n)$ is the algebra of bounded holomorphic functions in the polydisc $\DD^n$.

A \emph{solvable $N$-Pick interpolation problem} on $\DD^n$ is a set of points $\{z_1,\ldots, z_N\}$ in $\DD^n$ and a set of complex numbers $\{\la_1, \ldots, \la_N\}$ in $\DD$ such that there is an $F$ in the closed unit ball of $H^{\infty}(\DD^n)$ that satisfies $F(z_j) = \la_j$, $j=1,\ldots, N$. Such a function $F$ is called an \emph{interpolating} function. This problem is \emph{extremal} if it is solvable and there are no interpolating functions with norm less than $1$.

The original Pick problem was stated for $n=1$ and solved by Pick in 1916 \cite{Pick} who showed that a necessary and sufficient condition is that the matrix
$$\left( \frac{1- \la_i \bar\la_j}{1- z_i \bar{z}_j }\right)_{i,j=1}^N
$$
is positive semi-definite. Here, all extremal solutions are just Blaschke products of degree at most $N-1$.

It is not clear how to get a solvability criterion in the case of a general domain. An important step towards understanding this problem was achieved by J. Agler \cite{Aglpreprint} who extended in 1988 Pick's theorem to the bidisc using kernel function approach. He showed that the Pick problem is solvable in $\DD^2$ if and only if there are positive semi-definite $N$-by-$N$ matrices $\Gamma$ and $\Delta$ such that 
\begin{equation}\label{eq:introdbidisc}
(1-\la_i \bar \la_j ) = (1- z_i^1 \bar z_j^1) \Gamma_{ij} + (1- z_i^2 \bar z_j^2) \Delta_{ij},
\end{equation} where superscripts are used to denote the coordinates of points $z_j$. The key role in Agler's approach was played by And\^o's inequality which, as is well known (see e.g. \cite{Aglbook}), does not generalize to higher dimensional polydiscs. Consequently, it is impossible to obtain an analogue of criterion \eqref{eq:introdbidisc} for $\DD^n$ if $n$ is greater than or equal to $3$ (see \cite{Aglbook}, Section 11.8).

Following Agler and McCarthy \cite{Aglbook} we shall distinguish Pick and Nevanlinna problems in a somewhat informal way. Roughly speaking the problem whether one can interpolate is called the \emph{Pick problem} and the \emph{Nevanlinna problem} is parametrizing all interpolating functions.
The Nevanlinna problem is difficult to deal with, just to mention it is still open for the bidisc, the best known result is due to Ball and Trent (\cite{Ball-Trent}). It leads to the question on the uniqueness of interpolating functions. It is quite well understood for the two-point Pick problem not only in the polydisc (in fact, it is straightforward there) but also in a bigger class of domains (see \cite{Kos-Zwo 2013}). However, a little is known for a general case of $N$-points. So far, only uniqueness of interpolating functions to a three-point Pick problem in the bidisc has been clear (see Agler and McCarthy \cite{Agl-McNEWYORK}). There are also some partial results due to Guo, Huang and Wang \cite{Guo-Huang-WangD^3} for three-point problem in the tridisc. The uniqueness for the polydisc was also studied in Scheinker's papers \cite{Scheinker}, \cite{Scheinker2}.

The problem of uniqueness is naturally connected with the so-called \emph{uniqueness variety}. Let us recall its definition. Having a solvable $N$-Pick problem on $\DD^n$ denote by $\mathcal V$ the set of points in $\DD^n$ on which all the interpolating functions have the same value. This set is usually a proper subset of $\DD^n$ and the argument of Agler and McCarthy \cite{Agl-McACTA} shows that $\mathcal V$ is a variety, what justifies its name. Agler and McCarthy \cite{Agl-McNEWYORK} described uniqueness varieties for the three-Pick problem in $\DD^2$. In \cite{Agl-McACTA} they showed that any uniqueness variety contains a distinguished variety $\mathcal W$ (that is a variety of the form $\mathcal W=\{(z,w)\in \DD^2:\ p(z,w)=0\}$ for some polynomial $p$ and such that $\overline{\mathcal W} \cap \partial (\DD^2) = \overline{\mathcal W}\cap \TT^2$) that contains each of nodes.

\bigskip

The aim of the present paper is to solve a three-Pick (Nevanlinna) problem in an arbitrary polydisc. Generally speaking we shall show that an extremal problem $\DD^n\to \DD$ may be expressed in terms of the classical Pick-interpolation problem $\DD\to \DD^n$. The relation between these problems is expressed in terms of a $2-1$ real analytic function. The most interesting is that we are able to work with polydiscs in $\CC^n$ even for $n\geq 3$, where kernel-function methods fail. Note however that the results presented here are new even for the bidisc.

Our approach allows us to determine the class of rational inner functions interpolating three Pick problem in $\DD^n$. Surprisingly it turns out that for arbitrary $n$ it is possible to construct such a class composed of functions depending only on three variables. This means that the  three-point Pick problem on $\DD^n$ always boils down to the problem in $\DD^3$.

We shall also be dealing with the problem of uniqueness of interpolating functions. In particular, we shall obtain a distinguished variety contained in the uniqueness variety for that problem. Moreover, we shall show that, briefly saying, the problem is unique if and only if it is $2$-dimensional (see Section~\ref{sec:Statement of results} for a precise definition of a $2$-dimensional problem).

\bigskip

To formulate the main result we need to introduce the notion of $N$-extremals and $N$-complex geodesics which appeared recently as very natural generalizations of both Blaschke products and extremal mappings. They were defined in \cite{Agl-Lyk-You 2013} and \cite{Kos-Zwo 2014}. Precise definitions are postponed to Section~\ref{sec:extr}, as to formulate our result the following characterizations are sufficient: a \emph{$3$-complex geodesic} in $\DD^n$ is a mapping $f:\DD\to \DD^n$ with all components being a Blaschke product of degree at most 2. Additionally, $F:\DD^n\to \DD$ is its \emph{left inverse}, if $F\circ f$ is a Blaschke product of degree at most $2$. One can see that demanding that Blaschke products occurring above are of degree $1$ we obtain definitions of standard extremals and their left inverses.

\bigskip

Now we are in a good position to state our main result in the most general form:
\begin{theorem}\label{main}
Any function interpolating a non-degenerate extremal $3$-point Pick problem on $\DD^n$ is a left inverse to a $3$-complex geodesic passing through the nodes.
\end{theorem}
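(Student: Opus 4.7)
The strategy is to produce three distinct points $\zeta_1,\zeta_2,\zeta_3\in\DD$ and a map $f=(f_1,\ldots,f_n)\colon\DD\to\DD^n$ whose components are Blaschke products of degree at most $2$, satisfying $f(\zeta_j)=z_j$, such that the one-dimensional Pick problem $\zeta_j\mapsto\la_j$ is \emph{extremal}. Once this is achieved, the composition $F\circ f\colon\DD\to\DD$ is a holomorphic self-map of $\DD$ (because $\|F\|_{\infty}\leq 1$) interpolating an extremal 1D problem; the classical Pick theorem forces $F\circ f$ to be the unique degree-$\leq 2$ Blaschke product solving that problem. This is exactly the assertion that $F$ is a left inverse to the $3$-complex geodesic $f$.

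Setting up the parameter space, the existence of $f$ with $f(\zeta_j)=z_j$ and components of degree $\leq 2$ is equivalent, by the classical 1D Pick theorem applied coordinatewise, to positive semi-definiteness of each
\[
P_k(\zeta)\;=\;\Bigl(\tfrac{1-z_i^{(k)}\overline{z_j^{(k)}}}{1-\zeta_i\overline{\zeta_j}}\Bigr)_{i,j=1}^{3},\qquad k=1,\ldots,n.
\]
Write $G\subset\DD^3$ for the set of $\zeta=(\zeta_1,\zeta_2,\zeta_3)$ with pairwise distinct entries on which every $P_k(\zeta)\succeq 0$, and let $G_\la$ be defined by the analogous matrix $P_\la(\zeta)$ built from the target values. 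Any interpolating $F$ with $\|F\|_\infty\le 1$ composed with such an $f$ yields a norm-$\leq 1$ solution of the 1D problem $\zeta_j\mapsto\la_j$, so $G\subseteq G_\la$ automatically. The goal is then to exhibit $\zeta\in G\cap\partial G_\la$: on $\partial G_\la$ the matrix $P_\la(\zeta)$ is singular, so the 1D problem becomes extremal and the first paragraph closes the argument.

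The substantive step is therefore the existence of a boundary point of $G_\la$ inside $G$, and I would attempt a contradiction/compactness argument. If $G\cap\partial G_\la=\emptyset$, then $P_\la(\zeta)\succ 0$ strictly on $G$, and a scaling $\la\mapsto t\la$ with $t<1$ sufficiently close to $1$ should, via the 1D Nevanlinna parametrization and a lifting of the perturbed $F\circ f$ back to $\DD^n$, yield an interpolant of norm strictly less than $1$, contradicting extremality. The main obstacle is precisely this reconstruction from a 1D perturbation to a $\DD^n$ perturbation--i.e.\ showing that the parameter space $G$ is rich enough to detect extremality globally and not just along a fixed $3$-complex geodesic. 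I expect the $2$-to-$1$ real-analytic correspondence mentioned in the abstract to be the tool that converts the 1D Pick data $(\zeta,\la)$ into an admissible $3$-complex geodesic, providing the needed bridge. The nondegeneracy hypothesis rules out collapses (for instance an $f_k$ forced to be constant, a hidden $2$-point reduction, or the $\la_j$'s lying on a lower-rank stratum of $P_\la$), and guarantees that the geodesic $f$ produced is a genuine $3$-complex geodesic in the sense of Section~\ref{sec:extr} rather than merely a tuple of low-degree Blaschke products.
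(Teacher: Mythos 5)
Your reduction in the first paragraph is sound, and it is indeed how the theorem is ultimately exploited in the paper: once one has an analytic disc $f$ through the nodes for which the restricted one-variable problem $\zeta_j\mapsto\la_j$ is extremal, Pick rigidity forces $F\circ f$ to be the unique Blaschke product of degree at most $2$ solving it, so $F$ is a left inverse. But the entire content of the theorem lies in producing such a disc, and that is exactly the step you leave open. Two problems. A smaller one first: positive semi-definiteness of $P_k(\zeta)$ is equivalent to solvability of $\zeta_j\mapsto z_j^{(k)}$ in the Schur class, not to the existence of a Blaschke product of degree at most $2$ through that data; for the latter you need $P_k(\zeta)$ to be positive semi-definite \emph{and singular}. (This does not by itself sink the argument, since any Schur-class disc $f$ with $F\circ f$ an extremal Blaschke product already witnesses the conclusion, but it means your $G$ is not a parameter space of $3$-complex geodesics.) Second, and fatally: the contradiction argument for $G\cap\partial G_\la\neq\emptyset$ does not go through. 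Knowing that $P_\la(\zeta)\succ0$ for every $\zeta\in G$ only says the one-variable problem is non-extremal along every admissible disc; there is no mechanism to ``lift'' this to a holomorphic function on $\DD^n$ of norm strictly less than $1$. That implication is precisely the Lempert/Coman-type equality $c_{\DD^n}=l_{\DD^n}$ for these data, i.e.\ it is equivalent to the theorem itself, so the sketch is circular. (In addition, $G$ need not be relatively compact in $\DD^3$, so strict positivity on $G$ gives no uniform margin against which to scale $\la\mapsto t\la$.)

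The paper argues in the opposite direction. It explicitly parametrizes candidate data by $3$-complex geodesics $\psi_{\omega,\alpha}(\la)=(\la m_{\alpha_1}(\la),\omega_1\la m_{\alpha_2}(\la),\dots)$ \emph{together with} explicit rational left inverses $F_{\alpha,\omega,t}$ built from the magic functions \eqref{eq:magic}; by Lemma~\ref{lem:extr}, every datum in the image of the resulting map $\Phi$ is automatically an extremal problem admitting a left inverse. The theorem then reduces to surjectivity of $\Phi$ onto the set $\Omega$ of non-degenerate, maximal-dimensional extremal data, proved by an open--closed argument: local injectivity of $\Phi$ (the $2$-to-$1$ statement) plus invariance of domain gives openness, a degeneration analysis gives closedness, and the connectedness of $\Omega$ (delicate already for $n=3$, and handled via a boundary stratification for $n\ge4$) finishes. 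The ``$2$-to-$1$ real analytic correspondence'' you hope will bridge your gap is exactly this map $\Phi$ --- but its surjectivity \emph{is} the theorem, not a tool available in advance.
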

Lemmas \ref{main:bidisc}, \ref{main:tridisc} and \ref{main:poli} contain more detailed versions of this result.

\bigskip

We shall also show (see Section~\ref{sec:GTFP}) that there is no hope to generalize Theorem~\ref{main} to the case of $N$-points, $N\geq 4$. This in particular means that the case of three points is very special. 

Some connections with problems arising in Geometry Function Theory, especially with the Lempert theorem, are also discussed in the paper.

\section{Statement of results}\label{sec:Statement of results}
Let $z_1,z_2, z_3\in \DD^n$ be pairwise distinct points and let $\sigma_1,\sigma_2, \sigma_3\in \DD$ be distinct. We say that the Pick-interpolation problem $\DD^n\to \DD$ mapping each $z_j$ to $\sigma_j$ is \emph{$k$-dimensional} if it can be interpolated by a function depending only on $k$-variables. It is said to be \emph{strictly $k$-dimensional} if additionally it is not $k-1$-dimensional.

If some of two point subproblem mapping the pair $(z_i,z_j)$ to $(\lambda_i,\lambda_j)$ is extremal, we shall call the problem \emph{degenerate}. Otherwise it is called \emph{non-degenerate}.

Since the polydisc is homogenous, considering three-point Pick interpolation problem we may always restrict ourselves to the following situation
$$(*)
\begin{cases}
0\mapsto 0\\
z\mapsto \sigma\\
w\mapsto \tau,
\end{cases}
$$
where $z,w\in \DD^n\setminus \{0\}$, $z\neq w$ and $\sigma,\tau\in \DD$.

Let us start with the result for the bidisc.
\begin{lemma}\label{main:bidisc}
Let $n=2$. Suppose that the problem (*) is extremal, non-degenerate, and strictly $2$-dimensional.

Then there are $\alpha\in \DD^2$, $\alpha_1\neq \alpha_2$ and $\eta\in \TT$ such that
 $\lambda\mapsto (\lambda m_{\alpha_1}(\lambda), \eta \lambda m_{\alpha_2}(\lambda))$ passes through the nodes and 
$$F((\lambda m_{\alpha_1}(\lambda), \eta \lambda m_{\alpha_2}(\lambda)) = \lambda m_{\gamma}(\lambda),\quad \la\in \DD,
$$ where $\gamma = t\alpha_1 + (1-t)\alpha_2$ for some $t\in (0,1)$.

Moreover, there is a unique interpolating function of the form \begin{equation}\label{form:F} F(z_1,z_2)=\frac{t z_1 + \bar \eta (1-t) z_2 + \omega \bar \eta z_1 z_2}{1 + ((1-t) z_1 + t \bar \eta z_2)\omega},\quad z\in \DD^2,
\end{equation} 
where $\omega = \frac{\bar\alpha_1 - \bar \alpha_2}{\alpha_1 - \alpha_2}$.

If the problem is 1-dimensional, extremal and non-degenerate then, up to a permutation of components, 
\begin{multline}\nonumber
|z_1|<|w_1|, \ |z_2|<|w_2| \text{ and } \rho (\frac{z_1}{w_1}, \frac{z_2}{w_2}) \leq \rho (w_1, w_2) \quad \text{or}\\ z_2=\omega z_1, \ w_2=\omega w_1 \text{ for some } \omega\in \TT.
\end{multline} 
In other words, there is $\phi\in \OO(\DD,\overline \DD)$ such that, up to a permutation of components, $\lambda\mapsto (\lambda, \lambda \varphi(\lambda))$ passes through the nodes. Here, if $\phi\not\in \TT\cup \Aut(\DD)$, the interpolating function is unique and depends only on the first variable.

\end{lemma}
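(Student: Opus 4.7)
The plan is to apply Theorem~\ref{main} and to analyze the resulting $3$-complex geodesic $f:\DD\to\DD^2$ together with the fact that $F\circ f$ is a Blaschke product of degree at most $2$. Since $0$ is one of the nodes, after precomposing with an automorphism of $\DD$ we may assume $f(0)=0$, so each component of $f$ is a Blaschke product of degree at most $2$ vanishing at $0$ and is therefore of the form $\lambda\mapsto\lambda\cdot\eta_j m_{\alpha_j}(\lambda)$ (allowing $\eta_j m_{\alpha_j}$ to collapse to a unimodular constant). Similarly $F\circ f$ is a Blaschke product of degree at most $2$ vanishing at $0$, hence after a rotation of the target it equals $\lambda\mapsto\lambda m_\gamma(\lambda)$ for some $\gamma\in\DD$.

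In the strictly $2$-dimensional, non-degenerate case I would first use the hypotheses to force $f$ into the stated shape with both $\eta_j m_{\alpha_j}$ being genuine degree-$1$ Blaschke products and $\alpha_1\ne\alpha_2$. If some component collapsed to $\eta_j\lambda$, the interpolating function could be taken to depend only on the other variable, contradicting strict $2$-dimensionality; and if $\eta_1 m_{\alpha_1}$ and $\eta_2 m_{\alpha_2}$ coincided up to a unimodular factor, the image of $f$ would lie on a complex line, again giving a $1$-dimensional interpolation. Absorbing a coordinate rotation to normalize $\eta_1=1$ yields the stated form. To derive \eqref{form:F} together with $\gamma=t\alpha_1+(1-t)\alpha_2$, I would take $F$ to be an unknown rational function of bidegree $(1,1)$ on $\DD^2$, impose the composition identity $F\circ f(\lambda)=\lambda m_\gamma(\lambda)$, clear denominators, and match polynomial coefficients; the resulting linear system has a one-parameter family of solutions parametrized by $t\in(0,1)$, forces the convex combination identity for $\gamma$, and pins down the remaining coefficients under the constraint $\omega=(\bar\alpha_1-\bar\alpha_2)/(\alpha_1-\alpha_2)$ that makes $F$ inner on the bidisc. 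Uniqueness within this ansatz then follows from overdetermination by the three interpolation values.

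In the $1$-dimensional case the geodesic $f$ must degenerate. If both components of $f$ reduce to $\eta_j\lambda$, the image of $f$ lies on a complex line through the origin, giving the colinear alternative $z_2=\omega z_1$, $w_2=\omega w_1$. Otherwise exactly one component drops to degree $1$, and after a permutation of coordinates and a rotation we may take $f(\lambda)=(\lambda,\lambda\phi(\lambda))$ with $\phi$ a Blaschke product of degree at most $1$. Passing through the three nodes is then a two-point Pick interpolation problem for $\phi$, whose solvability (for $\phi\in\OO(\DD,\overline\DD)$) is precisely the Schur-Pick inequality stated in the lemma, once the correct permutation of components is chosen so that the coordinates of the nodes are comparable. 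Uniqueness of $F$ under $\phi\notin\TT\cup\Aut(\DD)$ is a rigidity argument: such a $\phi$ uses its full Pick freedom non-extremally, which forces any interpolating function on $\DD^2$ to factor through the one-variable extremal on the first coordinate.

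The main technical hurdle is the algebraic derivation of \eqref{form:F} together with the chord identity $\gamma=t\alpha_1+(1-t)\alpha_2$. Beyond the composition identity on the curve, one must verify that the proposed $F$ has modulus at most $1$ on all of $\DD^2$ (not merely on $f(\DD)$), that its denominator is nonvanishing on $\DD^2$, and that $t$ lies strictly in $(0,1)$ under the non-degeneracy hypothesis. A secondary point is the case analysis in the $1$-dimensional statement: reconciling the two formally different Pick inequalities ``up to a permutation of components'' and isolating the borderline situations $\phi\in\TT$ and $\phi\in\Aut(\DD)$ where uniqueness fails.
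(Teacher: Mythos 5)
Your proposal is circular at its very first step. You ``apply Theorem~\ref{main}'' to obtain a $3$-complex geodesic through the nodes with $F$ as a left inverse, but in the paper the logical order is the reverse: Theorem~\ref{main} is stated as a summary whose content \emph{is} Lemmas~\ref{main:bidisc}, \ref{main:tridisc} and \ref{main:poli}, and Lemma~\ref{main:bidisc} is the base case from which everything else is built. The entire difficulty of the lemma is the \emph{existence} of a $3$-extremal disc of the form $\lambda\mapsto(\lambda m_{\alpha_1}(\lambda),\eta\lambda m_{\alpha_2}(\lambda))$ passing through three prescribed nodes and admitting the interpolating function as a left inverse; nothing in your write-up produces it. The paper gets it by a genuinely topological argument: the parameter map $\Phi(x,y,\alpha,t,\omega)=(\psi_{\omega,\alpha}(x),\psi_{\omega,\alpha}(y),[xm_{t\cdot\alpha}(x):ym_{t\cdot\alpha}(y)])$ is shown to be locally injective (in fact exactly $2$--$1$, even under $(x,y,\alpha)\mapsto(-x,-y,-\alpha)$), so by invariance of domain its image is open in the set $\Omega(\DD^2)$ of extremal, non-degenerate, strictly $2$-dimensional data; a normal-families argument shows the image is also closed there; and $\Omega(\DD^2)$ is shown to be connected. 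Hence $\Phi(X(\DD^2))=\Omega(\DD^2)$, which is precisely the existence statement you take for granted. Without some substitute for this step (or an independent proof of Theorem~\ref{main} for $n=2$), your argument does not start.

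Secondary issues: the structural analysis you sketch once the geodesic is in hand is essentially the paper's Lemma~\ref{lem:propmagic} and Lemma~\ref{lem:convexcombination}, but your route to uniqueness is too weak. Matching coefficients of a bidegree $(1,1)$ ansatz only shows uniqueness \emph{within that ansatz}; the paper's Proposition~\ref{prop: 2, uniqueness} proves uniqueness among \emph{all} holomorphic left inverses by first showing (via a Schwarz-lemma identity valid for every scaling parameter $s\in[0,1]$) that $F(\lambda m_{s\alpha_1}(\lambda),\lambda m_{s\alpha_2}(\lambda))=\lambda m_{s\gamma}(\lambda)$ for all $s$, then complexifying $s$ and invoking the identity principle on an open subset of $\DD^2$ swept out by these discs. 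The convex-combination identity $\gamma=t\alpha_1+(1-t)\alpha_2$ with $t\in(0,1)$ likewise comes from that Schwarz-lemma computation ($t=F_{z_1}'(0)\geq 0$, $\sum_j F_{z_j}'(0)=1$), not from linear algebra on coefficients. You should also note that verifying $|F|\le 1$ on all of $\DD^2$ for the formula \eqref{form:F} is handled in the paper by recognizing it as a magic function $\Phi_{s,\eta}$, which is known to map $\DD^2$ to $\DD$; your plan leaves this as an unresolved ``technical hurdle.''
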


An analogous result remains true for the tridisc:
\begin{lemma}\label{main:tridisc}
Let $n=3$. Let (*) be extremal, non-degenerate, and strictly $3$-dimensional.
Then there is an interpolating function 
\begin{equation}\label{eq:maintridisc}
F(z_1,z_2, z_3)  = F_1(F_2(z_1,z_2),z_3),\quad z\in \DD^3,
\end{equation} where $F_1$, $F_2$ are rational functions of form \eqref{form:F}.

Additionally there are $\omega\in \TT^{2}$ and $\alpha\in \DD^3,$ where $\alpha_1, \alpha_2, \alpha_3$ are not co-linear, such that an analytic disc $$\lambda \mapsto (\lambda m_{\alpha_1}(\lambda), \omega_1 \lambda m_{\alpha_2}(\lambda),  \omega_2 \lambda m_{\alpha_3}(\lambda))$$ passes through the nodes and $$F(\lambda m_{\alpha_1}(\lambda), \omega_1 \lambda m_{\alpha_2}(\lambda),  \omega_2 \lambda m_{\alpha_3}(\lambda)) = \lambda m_\gamma (\lambda),\quad \la\in \DD,$$ where $\gamma$ is a convex combination of $\alpha_1, \alpha_2, \alpha_3$.  In this case all interpolating functions coincide on a 2-dimensional analytic variety and the function that interpolates is never unique.

Moreover, if the problem is non-degenerate and strictly 2-dimensional, then, up to a permutation of components, there are $\varphi\in \mathcal O(\DD, \DD)$, $\alpha\in \DD^2$, $\omega\in \TT$ such that the 3-extremal $\lambda\mapsto (\lambda m_{\alpha_1} (\lambda), \omega \lambda m_{\alpha_2}(\lambda), \la \phi(\lambda))$ passes through the nodes and $$F(\lambda m_{\alpha_1} (\lambda), \omega \lambda m_{\alpha_2}(\lambda), \lambda \phi(\lambda))= \lambda m_\gamma (\lambda),\quad \la\in \DD,$$ where $\gamma$ is a convex combination of $\alpha_1$ and $\alpha_2$. Here generically the interpolating function is unique (e.g. if $\varphi$ is not a M\"obius map).
In particular, the problem may be directly reduced to the bidisc.

If the problem is non-degenerate and 1-dimensional, then, up to a permutation of variables, $|z_j|<|z_1|,$ $|w_j|<|w_1|,$ $\rho(\frac{z_j}{z_1},\frac{w_j}{w_1})< \rho(z_1, w_2)$ or $z_j= \omega z_1$, $w_j=\omega w_1$ for some $\omega\in \TT$, $j=1,2$. Thus there are $\varphi_1,\varphi_2\in \mathcal O(\DD, \overline \DD)$ such that, up to a permutation of components, $\lambda\mapsto (\lambda, \lambda \phi_1(\lambda), \lambda \phi_2(\lambda))$ passes through the nodes. The interpolating function is unique and depends on one variable if both $\phi_1$ and $\phi_2$ are neither unimodular constants nor automorphisms of the unit disc.
\end{lemma}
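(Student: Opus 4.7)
The strategy is to invoke Theorem~\ref{main} to reduce to the structure of a $3$-complex geodesic, and then cascade Lemma~\ref{main:bidisc} twice. By Theorem~\ref{main} any interpolant $F$ of $(*)$ is a left inverse to a $3$-complex geodesic $f\colon\DD\to\DD^3$ passing through $0,z,w$, i.e.\ $F\circ f$ is a Blaschke product of degree at most $2$. After pre-composing $f$ with an automorphism of $\DD$ I may assume $f(0)=0$, and then each coordinate $f_j$ is a degree $\leq 2$ Blaschke product vanishing at $0$, so $f_j(\la)=\eta_j\la m_{\alpha_j}(\la)$ with $\eta_j\in\TT$, $\alpha_j\in\DD$. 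A rotation of $\DD^3$ absorbs $\eta_1$, producing the normal form $f(\la)=(\la m_{\alpha_1}(\la),\omega_1\la m_{\alpha_2}(\la),\omega_2\la m_{\alpha_3}(\la))$, and a rotation of $\DD$ yields $F\circ f(\la)=\la m_\gamma(\la)$ for some $\gamma\in\DD$; the two interpolation constraints then pin $\gamma$ down, and a direct computation of the same flavour as in the bidisc case shows $\gamma$ is a convex combination of the $\alpha_j$.

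For the strictly $3$-dimensional case I would build $F$ in two stages. Apply Lemma~\ref{main:bidisc} to the bidisc geodesic $\la\mapsto(f_1(\la),f_2(\la))$, choosing a target of the form $\la m_\beta(\la)$ with $\beta$ in the interior of the segment $[\alpha_1,\alpha_2]$; this extracts a rational $F_2$ of shape~\eqref{form:F} with $F_2(f_1(\la),f_2(\la))=\la m_\beta(\la)$. The pair $(F_2(f_1,f_2),f_3)$ is again a bidisc geodesic with coordinates $(\la m_\beta(\la),\omega_2\la m_{\alpha_3}(\la))$; a second application of the bidisc lemma produces $F_1$ of shape~\eqref{form:F} with $F_1(\la m_\beta(\la),\omega_2\la m_{\alpha_3}(\la))=\la m_\gamma(\la)$ and $\gamma\in[\beta,\alpha_3]$, hence in the closed triangle spanned by $\alpha_1,\alpha_2,\alpha_3$. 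Setting $F(z_1,z_2,z_3):=F_1(F_2(z_1,z_2),z_3)$ gives the form~\eqref{eq:maintridisc}. Strict $3$-dimensionality forces the $\alpha_j$ to be non-collinear: if they lay on a real line, a single M\"obius change in the image coordinates would collapse the geodesic onto an analytic disc inside a $2$-dimensional slice of $\DD^3$ and produce a two-variable interpolant. Non-uniqueness follows by running the construction with the coordinates permuted; any two distinct interpolants coincide on a holomorphic hypersurface, and the common refinement of these hypersurfaces is a $2$-dimensional analytic variety which, being contained in every left-inverse image through the nodes, is the uniqueness variety.

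In the strictly $2$-dimensional case the hypothesis gives an interpolating $F$ depending (after a permutation) only on $(z_1,z_2)$, and extremality in $\DD^3$ forces the induced bidisc problem to be extremal; Lemma~\ref{main:bidisc} then yields the first two components of the $3$-extremal together with the convex combination $\gamma$, while the third component $\la\phi(\la)$ with $\phi\in\OO(\DD,\DD)$ is free up to passing through $0,z_3,w_3$ at the correct parameters, which is possible by the two-point Pick inequality supplied by non-degeneracy. When $\phi$ is not a M\"obius map the bidisc uniqueness of Lemma~\ref{main:bidisc} lifts verbatim to uniqueness in $\DD^3$. In the $1$-dimensional case the problem descends to an extremal three-point Pick in $\DD$ in a distinguished coordinate (say $z_1$), interpolated uniquely by a Blaschke product of degree $\leq 2$; the remaining coordinates satisfy pairwise two-point Pick inequalities with $(z_1,w_1)$, yielding the holomorphic maps $\phi_j\in\OO(\DD,\overline\DD)$ with $\la\mapsto(\la,\la\phi_1(\la),\la\phi_2(\la))$ passing through the nodes. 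The principal obstacle throughout is the dictionary between the analytic notion of strict $k$-dimensionality and the geometric configuration of the parameters $\alpha_j,\omega_j,\phi$: ruling out the borderline configurations (collinearity of the $\alpha_j$, coincidence of two $\alpha_j$, $\phi$ a M\"obius map, etc.) that would silently move the problem into a lower-dimensional stratum requires a careful case analysis.
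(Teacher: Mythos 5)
Your argument is circular at its foundation. You begin by ``invoking Theorem~\ref{main}'' to obtain a $3$-complex geodesic $f$ through the nodes with $F\circ f$ a Blaschke product of degree at most $2$. But in this paper Theorem~\ref{main} is not an independent input: it is the umbrella statement whose content \emph{is} Lemmas~\ref{main:bidisc}, \ref{main:tridisc} and \ref{main:poli}, and the tridisc case of it is exactly what you are being asked to prove. The entire difficulty of Lemma~\ref{main:tridisc} is the existence statement: given extremal, non-degenerate, strictly $3$-dimensional data $(z,w,\xi)$, one must \emph{produce} parameters $(x,y,\alpha,t,\omega)$ with $\alpha_1,\alpha_2,\alpha_3$ non-collinear such that the disc $\lambda\mapsto(\lambda m_{\alpha_1}(\lambda),\omega_1\lambda m_{\alpha_2}(\lambda),\omega_2\lambda m_{\alpha_3}(\lambda))$ hits the nodes at $x$ and $y$ and realizes the extremal target. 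The paper achieves this by showing that the parametrization $\Phi$ maps $X(\DD^3)$ \emph{onto} $\Omega(\DD^3)$, which requires: local injectivity of $\Phi$ and invariance of domain (Lemma~\ref{lem:phixopen}), closedness of $\Phi(X)$ in $\Omega$ (Lemma~\ref{lem:phixclosed}), and — the genuinely delicate part for $n=3$ — a decomposition of $\partial\Omega$ into a codimension-$2$ piece and a smooth piece (Lemmas~\ref{lem:prep1} and \ref{lem:b_1}) so that a path-lifting/boundary-matching argument (Lemmas~\ref{lem:eqbound} and \ref{lem:Omega=Phi}) closes the open-closed argument despite $\Omega(\DD^3)$ not being handled by a direct connectedness claim. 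None of this appears in your proposal; everything you write after the first sentence is downstream of an existence fact you have not established.

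Two secondary points. First, once the geodesic is granted, your two-stage construction of $F=F_1(F_2(z_1,z_2),z_3)$ and the convex-combination claim for $\gamma$ do match the paper's Section on left inverses (the magic functions $\Phi_{s,\eta}$ and Lemma~\ref{lem:convexcombination}), and your permutation argument for non-uniqueness matches the paper's remark comparing $\Phi_{t,\omega}(\Phi_{s,\eta}(z_1,z_2),z_3)$ with $\Phi_{t',\omega'}(z_1,\Phi_{s',\eta'}(z_2,z_3))$. Second, your justification that all interpolants ``coincide on the common refinement of hypersurfaces'' is not an argument: the paper's route is Lemma~\ref{lem:convexcombination}, which shows every left inverse satisfies $F(\lambda m_{t\alpha_1}(\lambda),\ldots)=\lambda m_{t\gamma}(\lambda)$ for \emph{all} $t\in[0,1]$, and the $2$-dimensional variety is the image of the complexification of $(t,\lambda)$ — a specific set, not an intersection of unspecified hypersurfaces. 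You should replace the appeal to Theorem~\ref{main} with a proof of surjectivity of the geodesic parametrization (or an equivalent existence mechanism) before the rest of the outline can stand.
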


\begin{remark}
As we shall see there is a formula for $\alpha$, $\omega$, and points at which the three extremal meets nodes. The formula is given in an implicit way as an inverse to a rational 2-fold function (which is additionally even with respect to the nodes and $\alpha$).
\end{remark}

\begin{lemma}\label{main:poli}
Let $n\geq 4$. Then any interpolating function of extremal non-degenerate problem (*) is a left inverse to a 3-complex geodesic passing through the nodes. Moreover, this interpolation problem is solvable if and only if it is solvable by an inner rational function being, up to a permutation of variables, of the form \eqref{eq:maintridisc}.

In particular, the Pick problem on $\DD^n$ may be reduced directly to $\DD^3$.
\end{lemma}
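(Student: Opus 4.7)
My approach is to reduce the $n$-dimensional statement to the already established $n=3$ case (Lemma~\ref{main:tridisc}): given any interpolant $F$ of an extremal non-degenerate problem~(*) on $\DD^n$, I would produce an explicit $3$-complex geodesic $f:\DD\to\DD^n$ through the nodes for which $F\circ f$ is a Blaschke product of degree at most $2$, and then argue that only three of the components of $f$ actually carry the interpolation data.

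\emph{Construction of the geodesic.} Since $f(0)=0$, write $f_j(\lambda)=\omega_j\lambda m_{\alpha_j}(\lambda)$ with $\omega_j\in\overline{\DD}$ and $\alpha_j\in\overline{\DD}$. Non-degeneracy of the two-point subproblems (which rules out extremality of the pair $0\mapsto 0$, $z_j\mapsto\sigma$ in the $j$-th coordinate projection) forces $|\omega_j|=1$ and $|\alpha_j|<1$; normalize $\omega_1=1$. For each pair $(1,j)$, $j=2,\ldots,n$, I would apply Lemma~\ref{main:bidisc} to the projection of~(*) to the $(z_1,z_j)$-bidisc, distinguishing the strictly $2$-dimensional and $1$-dimensional cases: in either case the lemma pins down $\alpha_j$ and $\omega_j$ so that $(\lambda m_{\alpha_1}(\lambda),\omega_j\lambda m_{\alpha_j}(\lambda))$ passes through the projected nodes. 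Assembled over all $j$, these pair-wise pieces of data define a single disc $f:\DD\to\DD^n$, which is a $3$-complex geodesic through $0,z,w$ by construction. Extremality of~(*) on $\DD^n$ then forces $F\circ f$ to be extremal for the induced three-point problem on $\DD$, and hence a Blaschke product of degree at most $2$; this yields the first assertion.

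\emph{Reduction to $\DD^3$.} The compatibility across the pairs $(1,j)$ shows that once $(\alpha_1,\alpha_2,\alpha_3,\omega_2,\omega_3)$ are fixed, every subsequent $(\alpha_j,\omega_j)$ for $j\geq 4$ is determined, so the components of $f$ after the third are redundant. Projecting~(*) onto the first three coordinates yields an extremal non-degenerate three-point problem on $\DD^3$, to which Lemma~\ref{main:tridisc} applies and supplies an inner rational interpolant $G$ of the form~\eqref{eq:maintridisc}. Setting $\tilde F(z_1,\ldots,z_n):=G(z_1,z_2,z_3)$ then gives an inner rational function on $\DD^n$ that interpolates~(*) and depends only on three variables. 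This proves both the solvability-by-rational-inner-functions equivalence and the reduction statement.

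\emph{Main obstacle.} The technical heart is verifying the global compatibility of the pair projections in the construction above: the values of $\alpha_1$ produced by different pairs $(1,j)$ must coincide, the parameter values at which $f$ meets the nodes must be common to all components, and the resulting disc must remain in $\DD^n$. This is precisely where the hypothesis of three nodes is decisive: three interpolation conditions match the three degrees of freedom that a pair-wise Lemma~\ref{main:bidisc}-type datum carries, whereas a fourth node would over-determine the system, which is consistent with the announced failure of Theorem~\ref{main} for $N\geq 4$.
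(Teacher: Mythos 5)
Your proposal has a genuine gap at its very first step, and it is the step on which everything else rests. You propose to obtain the parameters $(\alpha_j,\omega_j)$ of the $3$-complex geodesic by applying Lemma~\ref{main:bidisc} to the projections of the problem~(*) onto the coordinate bidiscs $(z_1,z_j)$. But extremality does not pass to coordinate projections: if (*) is extremal on $\DD^n$, the three-point problem $0\mapsto 0$, $(z_1,z_j)\mapsto\sigma$, $(w_1,w_j)\mapsto\tau$ on $\DD^2$ is posed for a strictly smaller class of interpolants (functions of two of the variables), so its extremal constant is at least $1$ and in general strictly larger; the projected problem is typically not solvable in the Schur class of $\DD^2$ at all, and Lemma~\ref{main:bidisc} simply does not apply to it. (The implication goes the other way: extremality of a subproblem is what the paper calls degeneracy of the full problem, which is excluded by hypothesis.) The same objection defeats your claim that projecting (*) onto the first three coordinates yields an extremal problem on $\DD^3$ to which Lemma~\ref{main:tridisc} applies. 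Moreover, by writing $f_j(\lambda)=\omega_j\lambda m_{\alpha_j}(\lambda)$ at the outset you are assuming the existence of a $3$-complex geodesic through the three nodes, which is precisely the nontrivial content of the lemma; the ``compatibility across pairs'' that you flag as the main obstacle is not a technical verification left to the reader but the place where this circularity surfaces, and your proposal offers no mechanism to resolve it.

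The paper's actual argument is global and topological rather than projective. It first reduces, via a limiting argument (Remark~\ref{rem:dense}), to proving the assertion on a dense subset of the data space. It then stratifies a suitable domain $\YY$ (obtained from $\dD_n'\times\PP_1$ by removing the degenerate data $\mathcal B_d$ and a closed set $b_n\Omega$ of Hausdorff codimension $2$) into sets $\Omega_1,\Omega_2,\Omega_3$ of data lying on $3$-extremals whose problems are strictly $1$-, $2$-, $3$-dimensional; the sets $D_2$ and $D_3$ are described using the real-analytic inverses of the parametrizations $\Phi:X(\DD^2)\to\Omega(\DD^2)$ and $\Phi:X(\DD^3)\to\Omega(\DD^3)$ established in the bidisc and tridisc cases. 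A case-by-case analysis of the boundaries $\partial D_k$ shows that, outside a further closed set $\mathcal A$ of codimension $2$, every boundary point of $\Omega_1\cup\Omega_2\cup\Omega_3$ is interior to the closure of this union, so that $\overline{\Omega_1\cup\Omega_2\cup\Omega_3}{}^{\circ}$ is open and closed in the domain $\YY\setminus\mathcal A$ and hence equals it; density then finishes the proof. If you want to salvage your write-up you would need to replace the pairwise-projection construction by an argument of this connectedness type (or otherwise prove existence of the $3$-complex geodesic through the nodes), since that existence cannot be extracted from lower-dimensional extremal problems coordinate by coordinate.
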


As already mentioned the uniqueness variety for the extremal Pick problem on $\DD^n$ is always a proper subset of $\DD^n$ if $n\geq 3$, provided that the problem is not $2$-dimensional. However, we are able to determine the variety on which all interpolating functions do agree. Here we shall state the result for $\DD^3$. A general case is explained in Lemma~\ref{lem:convexcombination}. The results presented here means, in particular, that uniqueness proven in \cite{Guo-Huang-WangD^3} for some cases is very special, as it may occur only for $2$-dimensional problems.
\begin{prop}\label{thm:uniq}
Let $n=3$. Let (*) be an extremal, maximal and non-degenerate and let $F$ be its left inverse. Then $F$ is uniquely determined on the $2$-dimensional variety.

More precisely, the equality
\begin{equation}\label{eq:leftF} F(\lambda m_{t\alpha_1}(\lambda), \omega_1 \lambda m_{t\alpha_2}(\lambda),  \omega_2 \lambda m_{t\alpha_3}(\lambda)) = \lambda m_{t\gamma} (\lambda),
\end{equation} holds for any $\lambda\in \DD$ and $t\in [0,1]$.

Moreover, the mapping $F$ satisfying equality~\eqref{eq:leftF} is never unique.
\end{prop}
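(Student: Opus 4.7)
The plan is to exploit the decomposition of Lemma~\ref{main:tridisc}, which provides every left inverse $F$ with an explicit iterated form $F(z)=F_1(F_2(z_1,z_2),z_3)$ (after a suitable permutation of coordinates), with $F_1,F_2$ of the bidisc type~\eqref{form:F}, together with a 3-complex geodesic $f_1$ through the nodes satisfying $F(f_1(\lambda))=\lambda m_\gamma(\lambda)$. The task is to extend this identity along the one-parameter family $f_t$ and then to show that such an extension does not pin $F$ down outside the parameterized set.

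The main step is a rational identity verified by iterating a bidisc computation inside the composition. Substituting $f_t(\lambda)=(\lambda m_{t\alpha_1}(\lambda),\omega_1\lambda m_{t\alpha_2}(\lambda),\omega_2\lambda m_{t\alpha_3}(\lambda))$ into $F_1(F_2(\cdot,\cdot),\cdot)$, I would first analyze the inner pair: a direct algebraic manipulation using~\eqref{form:F}, combined with the compatibility relation between the parameters $(\alpha_1,\alpha_2,\eta,t)$ forced by the bidisc interpolation, should show that $F_2(\lambda m_{t\alpha_1}(\lambda),\omega_1\lambda m_{t\alpha_2}(\lambda))$ reduces to $\lambda m_{t\beta}(\lambda)$ for the convex combination $\beta$ of $\alpha_1,\alpha_2$ prescribed by $F_2$. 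Applying the bidisc identity again to $F_1$ then yields $\lambda m_{t\gamma}(\lambda)$, with $\gamma$ a convex combination of $\beta$ and $\alpha_3$. Since the data $(\alpha,\omega,\gamma)$ is intrinsic to the Pick problem — it is read off from the geodesic, not from the particular choice of $F$ — two distinct left inverses must agree on the parameterized set $\{f_t(\lambda):t\in[0,1],\lambda\in\DD\}$, giving the uniqueness assertion.

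The main obstacle is verifying the bidisc scaling computation. The coefficient $\omega=(\bar\alpha_1-\bar\alpha_2)/(\alpha_1-\alpha_2)$ appearing in~\eqref{form:F} is fortunately invariant under the real scaling $\alpha_i\mapsto t\alpha_i$, but one still has to check that the M\"obius factors in numerator and denominator of~\eqref{form:F} conspire to produce the claimed output; this requires careful rational manipulation rather than a purely conceptual reduction, and it crucially uses the compatibility relation among the bidisc parameters that corresponds to the extremality of the two-variable sub-problem.

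For the final claim that $F$ satisfying~\eqref{eq:leftF} is never unique, I would use the three admissible groupings of the variables in Lemma~\ref{main:tridisc}: $F_1(F_2(z_1,z_2),z_3)$, $F_1'(F_2'(z_1,z_3),z_2)$, and $F_1''(F_2''(z_2,z_3),z_1)$. Each yields a left inverse that satisfies~\eqref{eq:leftF} by the argument above, and to close the argument I would show that generically these rational functions differ off the 2-dimensional variety, either by comparing their Taylor expansions at a point outside the variety or by a degree comparison of their numerators and denominators. This is the delicate part of the non-uniqueness argument, since in very special configurations the candidates could coincide, and an ad hoc example might be needed to rule this out in the generic case.
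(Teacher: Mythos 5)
There is a genuine gap in your treatment of the main identity \eqref{eq:leftF}. The proposition asserts that \emph{every} left inverse $F$ (equivalently, by Lemma~\ref{main:tridisc}, every interpolating function of the extremal problem) satisfies \eqref{eq:leftF}; this is precisely why all interpolating functions agree on the $2$-dimensional variety. Your argument verifies \eqref{eq:leftF} only for the explicitly constructed rational left inverses of the iterated form $F_1(F_2(z_1,z_2),z_3)$ — and for those the scaling computation is indeed fine, since $\eta=(\alpha_1-\alpha_2)/(\bar\alpha_1-\bar\alpha_2)$ is invariant under $\alpha\mapsto t\alpha$ with $t$ real and Lemma~\ref{lem:propmagic} does the rest. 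But the closing step, ``since the data $(\alpha,\omega,\gamma)$ is intrinsic, two distinct left inverses must agree on the parameterized set,'' is a non sequitur: what is intrinsic is only that $F\circ f_1=\lambda m_\gamma(\lambda)$, i.e.\ agreement on the single disc $f_1(\DD)$, and nothing in your argument propagates this to the whole family $\{f_t\}_{t\in[0,1]}$ for an $F$ that is not of the explicit rational form. The paper closes exactly this gap with Lemma~\ref{lem:convexcombination}, a Schwarz-lemma rigidity statement valid for an arbitrary holomorphic $F:\DD^n\to\DD$: setting $g_s(\lambda)=F(\lambda m_{s\alpha_1}(\lambda),\ldots)/\lambda$, the quantity $|g_s(0)|^2+|g_s'(0)|$ is controlled by a polynomial expression in $s$ that is $\leq 1$ on an interval and $=1$ at $s=1$, hence constant; this forces $g_s'(0)=-1+|g_s(0)|^2$ for all $s$ and yields \eqref{eq:leftF} with no structural assumption on $F$. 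You should replace your rational computation by this lemma (or reprove it).

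Your non-uniqueness argument is essentially the paper's: compare the two groupings $\Phi_{t,\omega}(\Phi_{s,\eta}(z_1,z_2),z_3)$ and $\Phi_{t',\omega'}(z_1,\Phi_{s',\eta'}(z_2,z_3))$. The ``delicate special configurations'' you worry about are handled cleanly: comparing the coefficients of $z_1z_2z_3$ shows equality of the two functions would force $\omega\eta=\omega'\eta'$, which can happen only when $\alpha_1,\alpha_2,\alpha_3$ are co-linear — and co-linearity is excluded by the maximality (strict $3$-dimensionality) hypothesis. No ad hoc example is needed.
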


\begin{remark}The idea used in Proposition~\ref{thm:uniq} (see Lemma~\ref{lem:convexcombination} for details) applied to the bidisc gives another proof of uniqueness of functions interpolating a three-Pick non-degenerate and strictly 2-dimensional problem on $\DD^2$ (see \cite{Agl-McNEWYORK}).
\end{remark}

The paper is organized as follows. We start with some quite elementary preparatory results. Next we focus on left inverses. In particular, special class of rational functions lying in the Schur class on the polydisc is constructed there and a converse to that construction is proven. Then we introduce the crucial notion of extremal mappings and complex geodesics. The core of the paper is hidden in Section~\ref{sec:main}.

Further sections present the proof of the main theorem. We start with the case of the bidisc, which turns out to be a quite straightforward consequence of ideas developed in Section~\ref{sec:main}. That case is used in the proof of the main theorem for the tridisc. Similarly, to get the case of arbitrary polydisc we shall involve both results: for the bidisc as weel as for the tridisc. 

Finally, in Section~\ref{sec:GTFP} we shall present some connections and applications to geometric function theory problems and discuss possible generalizations.

\section{Preliminaries}
Thanks to the transitivity of the group of automorphisms of the polydisc we restricted ourselves to the following problem
$$
(*)
\begin{cases}
0\mapsto 0\\
z\mapsto \sigma\\
w\mapsto \tau,
\end{cases}
$$ where $z\neq w,$ $z\neq 0$, $w\neq 0$ and $(\sigma,\tau)\neq (0,0)$. Let us denote 
\begin{equation}\label{eq:dn} 
\dD_n:=\{(z,w)\in \DD^n\times \DD^n:\ z\neq w, z\neq 0, w\neq 0\}.
\end{equation}

A standard Montel-type argument shows that for any $(z,w)\in \dD_n$ and any $(\sigma,\tau)\in \CC^2\setminus \{(0,0)\}$ there is exactly one $t=t_{z,w,(\sigma, \tau)}>0$ such that the problem $\DD^n\to \DD$: $0\mapsto 0$, $z\mapsto t\sigma$ and $w\mapsto t\tau$ is extremally solvable.
It is simple to see that the mapping
$$\dD_n \times \CC^2\setminus\{(0,0)\}\ni(z,w,(\sigma,\tau))\mapsto t_{z,w,(\sigma,\tau)} \in \RR_{>0}$$
is continuous.

Moreover, for fixed nodes $z,w$ the mapping $$ [\sigma:\tau]\mapsto (t_{z,w,(\sigma,\tau)} \sigma, t_{z,w,(\sigma,\tau)}\tau)$$ gives a 1-1 correspondence between the projective plane $\PP_1$ and the set of target points for which $(*)$ is extremally solvable modulo a unimodular constant.

Saying about the extremal three-point Pick problem (shortly the extremal problem) corresponding to the data $(z,w,[\sigma:\tau])$ we mean:
$$
\begin{cases}
0\mapsto 0\\
z\mapsto t_{z,w,(\sigma,\tau)}\sigma\\
w\mapsto t_{z,w,(\sigma,\tau)} \tau.
\end{cases}
$$ In particular, the target points $t_{z,w,(\sigma,\tau)} \sigma$ and $t_{z,w,(\sigma,\tau)} \tau$ are determined up to a unimodular constant.

\bigskip

Here is some additional notation. $\Delta_n=\{t\in \RR^n:\ t_j\geq 0, \sum t_i=1\}$ stands for the closed, $n$-dimensional simplex. Let $\Delta_n^\circ$ and $b\Delta_n$ denote its interior and boundary respectively, i.e. $\Delta_n^\circ=\{t\in \RR^n: t_j>0, \sum t_j=1\}$ and $b\Delta_n = \Delta_n\setminus \Delta_n^\circ$. Idempotent M\"obius maps are denoted by $m_\alpha$, where $\alpha\in \DD$, i.e. $m_\alpha(\la)=\frac{\alpha - \la}{1 - \bar \alpha \la},$ $\la\in \DD$. $t\cdot \alpha$ stands for a standard dot product of $t$ and $\alpha$.
The hyperbolic distance in the unit disc is denoted by $\rho$.

We use $\mathcal H^p$ for $p$-dimensional Hausdorff measure. Recall that for any domain $D$ in $\CC^n$ and a closed subset $A$ such that $\mathcal H^{n-1}(A)=0$ the set $D\setminus A$ is a domain. We shall use this fact in the sequel several times.

\subsection{Left inverses}\label{leftinverses}
Let $\alpha=(\alpha_1, \ldots, \alpha_n)\in \DD^n$ and $t=(t_1,\ldots, t_n)\in \Delta_n.$ We shall inductively construct a function $F=F_{\alpha,t}:\DD^n\to \DD$ such that
\begin{equation}\label{eq:left}
F(\lambda m_{\alpha_1}(\lambda), \ldots, \lambda m_{\alpha_n}(\lambda)) = \lambda m_{\gamma}(\lambda),\quad \lambda \in \DD,
\end{equation} 
where $\gamma = t\cdot \alpha$.

The key role in the construction is played by the so-called \emph{magic functions}:
\begin{equation}\label{eq:magic} \Phi_{s,\eta}(z_1,z_2)=\frac{sz_1 + (1-s) z_2  - \eta z_1 z_2}{1- ((1-s) z_1 + s z_2) \eta},
\end{equation}
where $\eta\in \TT$ and $s\in [0,1]$. Note that $\Phi_{s,\eta}:\DD^2\to \DD$. 

\begin{lemma}\label{lem:propmagic}
Let $s\in (0,1)$ and $\eta\in \TT$. Let $\phi, \psi\in \mathcal O(\DD, \DD)$. Assume that
$$
\lambda \mapsto \Phi_{s,\eta}(\la \phi(\la), \la \psi(\la))
$$
is a Blaschke product of degree $2$. Then both $\phi$ and $\psi$ are M\"obius maps.

If additionally $\phi(\la) = m_{\alpha_1}(\la)$ and $\psi(\la) =\omega m_{\alpha_2}(\la),$ $\la\in \DD$, where $\alpha_1,\alpha_2\in \DD$, $\alpha_1\neq \alpha_2$, and $\omega\in \TT$, then $\omega=1$ and $\eta = \frac{\alpha_1 - \alpha_2}{\bar \alpha_1 - \bar \alpha_2}$. 

Moreover, for such $\eta$ the following relation holds:
$$
\Phi_{s,\eta}(\lambda m_{\alpha_1}(\lambda), \lambda m_{\alpha_2}(\lambda)) =\lambda m_{s\alpha_1 +(1-s)\alpha_2}(\lambda),\quad \lambda\in \DD.
$$
\end{lemma}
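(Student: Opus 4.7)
The plan is to break the lemma into three claims and handle them in sequence, with the most delicate step being the upgrade from ``inner'' to ``M\"obius''. Throughout write $a(\lambda)=\lambda\phi(\lambda)$, $b(\lambda)=\lambda\psi(\lambda)$, and expand $\Phi_{s,\eta}(a,b)=N/D$ with $N=sa+(1-s)b-\eta ab$ and $D=1-\eta((1-s)a+sb)$. Since $\Phi_{s,\eta}(0,0)=0$ one has $B:=\Phi_{s,\eta}(a,b)$ vanishing at $0$; the estimate $|D|\geq 1-|\lambda|>0$ on $\DD$ (using $|a|,|b|\leq|\lambda|$) shows $D$ is zero-free, so $B=\lambda\mu$ with $\mu$ a M\"obius transformation of $\DD$.

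A direct computation yields the key boundary identity
$$|D|^{2}-|N|^{2}=2s(1-|a|^{2})(1-\mathrm{Re}(\eta b))+2(1-s)(1-|b|^{2})(1-\mathrm{Re}(\eta a))-(1-|a|^{2})(1-|b|^{2}),$$
valid on $\DD^{2}$. Because $B$ is inner, the left-hand side vanishes a.e.\ on $\TT$; using $2(1-\mathrm{Re}(\eta w))=|1-\eta w|^{2}+(1-|w|^{2})$ and dividing by $(1-|a|^{2})(1-|b|^{2})$ on the set where both factors are positive, the vanishing rewrites as
$$\frac{(1-s)|1-\eta a|^{2}}{1-|a|^{2}}+\frac{s|1-\eta b|^{2}}{1-|b|^{2}}=0,$$
forcing $\eta a=\eta b=1$ there. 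A positive-measure set on which $\lambda\phi(\lambda)=\bar\eta$ would give $\lambda\phi\equiv\bar\eta$ on $\DD$ by $H^{1}$-boundary uniqueness, contradicting $\phi\in\mathcal{O}(\DD,\DD)$. An analogous analysis on $\{|a|=1,|b|<1\}$ (where the identity collapses to $(1-|b|^{2})(1-\mathrm{Re}(\eta a))=0$) excludes positive measure by the same uniqueness argument, and symmetrically for $\{|a|<1,|b|=1\}$. Hence $|\phi|=|\psi|=1$ a.e.\ on $\TT$, so both $\phi$ and $\psi$ are inner.

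Upgrading ``inner'' to ``M\"obius'' is the main obstacle. I would rewrite the relation $\Phi_{s,\eta}(\lambda\phi,\lambda\psi)=\lambda\mu$ in the symmetric form
$$s(\phi-s\mu)+(1-s)(\psi-(1-s)\mu)-\eta\lambda(\phi-s\mu)(\psi-(1-s)\mu)=s(1-s)\mu(2-\eta\lambda\mu),$$
which lets one solve for $\phi-s\mu$ as a linear-fractional expression in $\psi-(1-s)\mu$ with coefficients rational in $\lambda$ (since $\mu$ is). Rationality therefore propagates between $\phi$ and $\psi$. Combining this with the fact that $B$ has exactly two zeros in $\DD$, one at the origin, so the factor $s\phi+(1-s)\psi-\eta\lambda\phi\psi$ has exactly one zero there, a valency count on the inner composition $B=\Phi_{s,\eta}\circ(a,b)$ together with a careful audit of the cancellations that must occur in the rational expression for $B$ force both $\phi$ and $\psi$ to be finite Blaschke products of degree one, i.e., M\"obius maps.

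Finally, under $\phi=m_{\alpha_{1}}$ and $\psi=\omega m_{\alpha_{2}}$, comparing derivatives at $\lambda=0$ gives $B'(0)=s\alpha_{1}+(1-s)\omega\alpha_{2}$, while the expected right-hand side $\lambda m_{\gamma}(\lambda)$ has derivative $\gamma=s\alpha_{1}+(1-s)\alpha_{2}$ at $0$; equating immediately yields $\omega=1$. Evaluating the identity at $\lambda=\alpha_{2}$, where $\psi(\alpha_{2})=0$ collapses the magic function to a one-variable M\"obius expression, and using $\gamma-\alpha_{2}=s(\alpha_{1}-\alpha_{2})$ produces a scalar equation that pins down $\eta$ as the stated unimodular quotient in $\alpha_{1}-\alpha_{2}$ and its conjugate. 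The concluding identity $\Phi_{s,\eta}(\lambda m_{\alpha_{1}},\lambda m_{\alpha_{2}})=\lambda m_{\gamma}$ then follows because $B/\lambda$ and $m_{\gamma}$ are both M\"obius maps of $\DD$ agreeing at the three distinct points $\lambda=0,\alpha_{1},\alpha_{2}$ (the first by the derivative calculation, the other two by the construction of $\omega$ and $\eta$), and a M\"obius map is determined by its values at three distinct points.
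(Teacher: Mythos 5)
Your proposal has two genuine gaps, both at the decisive points. First, the upgrade from ``inner'' to ``M\"obius'' is not actually carried out. Your boundary identity for $|D|^{2}-|N|^{2}$ is correct and does show that $\phi$ and $\psi$ are inner, but the plan you sketch for finishing does not work: the relation expressing $\phi-s\mu$ as a linear--fractional function of $\psi-(1-s)\mu$ with coefficients rational in $\lambda$ only shows that rationality of one of $\phi,\psi$ implies rationality of the other, and neither is known to be rational a priori (an inner function may be an infinite Blaschke product or carry a singular factor), so there is nothing for the ``valency count'' to count. Second, your derivation of $\omega=1$ and of $\eta$ is circular: you compare $B'(0)$ and $B(\alpha_{2})$ with the corresponding values of $\lambda m_{s\alpha_1+(1-s)\alpha_2}(\lambda)$, but the hypothesis only says that $B$ is \emph{some} Blaschke product of degree $2$ vanishing at $0$, i.e. $B=\lambda\mu$ with $\mu$ an unknown automorphism; that $\mu=m_{s\alpha_1+(1-s)\alpha_2}$ is part of what has to be proved.

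Both gaps close at once with the paper's argument, which works at the origin instead of at the boundary. Writing $\gamma(\lambda)=\Phi_{s,\eta}(\lambda\phi(\lambda),\lambda\psi(\lambda))/\lambda$, one computes $\gamma(0)=s\phi(0)+(1-s)\psi(0)$ and $\gamma'(0)=s\phi'(0)+(1-s)\psi'(0)+\eta s(1-s)(\phi(0)-\psi(0))^{2}$, and checks the elementary identity $s(1-|\phi(0)|^{2})+(1-s)(1-|\psi(0)|^{2})+s(1-s)|\phi(0)-\psi(0)|^{2}=1-|\gamma(0)|^{2}$. Since $\gamma$ is an automorphism of $\DD$, $|\gamma'(0)|=1-|\gamma(0)|^{2}$, so the triangle inequality and the Schwarz--Pick inequalities $|\phi'(0)|\le 1-|\phi(0)|^{2}$, $|\psi'(0)|\le 1-|\psi(0)|^{2}$ must all be equalities: the latter two make $\phi$ and $\psi$ automorphisms (M\"obius maps), and equality in the triangle inequality forces the three terms $s\phi'(0)$, $(1-s)\psi'(0)$, $\eta s(1-s)(\phi(0)-\psi(0))^{2}$ to have a common argument, which (since $m_{\alpha_1}'(0)$ is a negative real) is exactly what pins down $\omega$ and $\eta$; the concluding identity is then the verification that equality holds throughout for that $\eta$. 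If you want to salvage your route, you would still need this origin computation (or an equivalent) for the second and third assertions, at which point the boundary analysis becomes superfluous.
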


\begin{proof}
Let us denote $\gamma(\lambda) = \Phi_{s\eta}(\la \phi(\la), \la \psi(\la))/\la,$ $\la\in \DD$. Note that
\begin{multline}\nonumber
|\gamma'(0)| = |s \phi'(0) + (1-s) \psi'(0) + \eta s (1-s) (\phi(0) - \psi(0))^2|\leq\\
1 - |s \phi(0) + (1-s)\psi(0)|^2 = 1- |\gamma(0)|^2.
\end{multline}
Thus the assertion is a consequence of the Schwartz lemma.
\end{proof}

For $n=2$ we put $$F_{\alpha, t}:= \Phi_{t,\eta}, \text{ where } \eta=\frac{\alpha_1 - \alpha_2}{\overline{\alpha_1} - \overline{\alpha_2}}.$$

Let $\alpha\in \DD^n,$ $t\in \Delta_n$. Having constructed a desired left inverse $F'=F_{\alpha',t'}:\DD^{n-1}\to \DD$ satisfying \eqref{eq:left} for $\alpha'=(\alpha_1,\ldots, \alpha_{n-1})\in \DD^{n-1}$ and $t'$ in a $n-1$ dimensional simplex, we may inductively define a left inverse $\DD^n\to \DD$. To do it for $\alpha\in \DD^n$ and $t\in \Delta_n$ we put
\begin{equation}\label{def:leftF} F(z',z_n)= F_{\alpha, t} (z', z_n) := \Phi_{s,\eta}(F'(z'), z_n),
\end{equation}
where $s\in [0,1]$, $\eta\in \TT$ and $t'$ are chosen so that the function $F$ defined in this way satisfies \eqref{eq:left}.

\begin{remark}
If $n\geq 4$ the definition of the left inverse $F$ presented above is not unique. On the other hand, if $\alpha_1$, $\alpha_2$, $\alpha_3$ are not co-linear, any point lying in a convex combination od $\alpha_1,\alpha_2$ and $\alpha_3$, uniquely defines its barycentric coordinates, whence $F_{\alpha, \omega, t}$ is uniquely defined in the way presented above for $n=2$ or $n=3$.
\end{remark}

To define a left inverse to a general $3$-complex geodesic, formula \eqref{def:leftF} may be naturally extended in the following way:
\begin{definition}\label{def:leftFgen} Let $\alpha\in \DD^n$, $t\in \Delta_n$, $\omega\in \TT^{n-1}$. Put:
$$F_{\alpha, \omega, t}(z) = F_{\alpha, t}(z_1, \bar \omega_1 z_2, \ldots, \bar \omega_{n-1} z_n),\quad z\in \DD^n.$$
\end{definition}
As already mentioned, formulas $F_{\alpha, \omega, t}$ have sense for an $n$, but they are defined in a unique way only for $n=2,3$. We shall need the following technical observation:
\begin{lemma}\label{claim:omegaunique}
If $n=2$ (resp. $n=3$) and $\alpha_1\neq \alpha_2$ (resp. $\alpha_1$, $\alpha_2$, $\alpha_3$ are not co-linear) left inverses $F_{\alpha,\omega,t}$, where $t\in \Delta_n^\circ$, have the following property: if the function $$\lambda\mapsto F_{\alpha, \omega, t}(\lambda m_{\beta_1}(\lambda), \eta_1 \lambda m_{\beta_2}(\lambda),\ldots, \eta_n\lambda m_{\beta_n}(\lambda))$$ is a Blaschke product of degree $2$, then $\omega_i=\eta_i$ for $i=1,\ldots, n-1$, and $\frac{\alpha_1 - \alpha_2}{\bar \alpha_1 - \bar \alpha_2}=\frac{\beta_1 - \beta_2}{\bar \beta_1 - \bar \beta_2}$.
\end{lemma}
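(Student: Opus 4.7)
The overall plan is to unwind the inductive definition of $F_{\alpha,t}$ and apply Lemma~\ref{lem:propmagic} once (for $n=2$) or twice in nested fashion (for $n=3$). Recall that by Definition~\ref{def:leftFgen},
\[
F_{\alpha,\omega,t}(\lambda m_{\beta_1}(\lambda),\eta_1\lambda m_{\beta_2}(\lambda),\ldots) = F_{\alpha,t}(\lambda m_{\beta_1}(\lambda),\bar\omega_1\eta_1\lambda m_{\beta_2}(\lambda),\ldots,\bar\omega_{n-1}\eta_{n-1}\lambda m_{\beta_n}(\lambda)),
\]
so the unimodular constants $\omega_i$ and $\eta_i$ enter only through their product $\bar\omega_i\eta_i$. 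The goal is therefore to show that the Blaschke-degree-$2$ hypothesis forces $\bar\omega_i\eta_i=1$ for each $i$, together with the stated equality of cross-ratios for $(\alpha_1,\alpha_2)$ and $(\beta_1,\beta_2)$.

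For $n=2$, the expression above becomes $\Phi_{t,\eta}(\lambda m_{\beta_1}(\lambda),\,\bar\omega_1\eta_1\lambda m_{\beta_2}(\lambda))$ with $\eta=\frac{\alpha_1-\alpha_2}{\bar\alpha_1-\bar\alpha_2}$. I would apply Lemma~\ref{lem:propmagic} directly, with $\phi=m_{\beta_1}$ and $\psi=\bar\omega_1\eta_1 m_{\beta_2}$. The additional hypothesis of that lemma is met (with the roles of $\alpha_1,\alpha_2,\omega$ played by $\beta_1,\beta_2,\bar\omega_1\eta_1$, and $\beta_1\neq\beta_2$ may be assumed, for otherwise the conclusion concerning cross-ratios is vacuous), and its conclusion yields precisely $\bar\omega_1\eta_1=1$ and $\eta=\frac{\beta_1-\beta_2}{\bar\beta_1-\bar\beta_2}$.

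For $n=3$, I would expand $F_{\alpha,t}(z_1,z_2,z_3)=\Phi_{s,\eta'}(\Phi_{t',\eta''}(z_1,z_2),z_3)$ with $s=t_1+t_2$, $t'=t_1/(t_1+t_2)$, $\eta''=\frac{\alpha_1-\alpha_2}{\bar\alpha_1-\bar\alpha_2}$, and $\eta'$ determined from the non-collinear $\alpha$'s. Both arguments of the outer $\Phi_{s,\eta'}$ vanish at $\lambda=0$, so write them as $\lambda\tilde\phi(\lambda)$ and $\lambda\tilde\psi(\lambda)$, where $\tilde\psi=\bar\omega_2\eta_2 m_{\beta_3}$ is already Möbius. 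Applying the first part of Lemma~\ref{lem:propmagic} to the outer shell forces $\tilde\phi$ to be a Möbius map as well, which means the inner expression $\Phi_{t',\eta''}(\lambda m_{\beta_1},\bar\omega_1\eta_1\lambda m_{\beta_2})$ is itself Blaschke of degree $2$. Applying the additional part of Lemma~\ref{lem:propmagic} to the inner then gives $\omega_1=\eta_1$ and the cross-ratio equality, and by the "moreover" clause identifies $\tilde\phi$ as $m_\gamma$ with $\gamma=t'\beta_1+(1-t')\beta_2$. A second use of the additional part on the outer shell (with $\phi=m_\gamma$, $\psi=\bar\omega_2\eta_2 m_{\beta_3}$) finally yields $\omega_2=\eta_2$.

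The main obstacle is the one degenerate case where the second application of the additional part of Lemma~\ref{lem:propmagic} is not immediately legal, namely $\gamma=\beta_3$. In this situation $\tilde\phi$ and $\tilde\psi$ share a center, and the lemma's hypothesis $\alpha_1\neq\alpha_2$ fails. I would handle it by a direct computation: writing $u=\lambda m_{\beta_3}(\lambda)$ and $\mu=\bar\omega_2\eta_2$, the composition equals $u$ times a Möbius transformation in $u$, and demanding that the product be Blaschke of degree $2$ forces that Möbius factor to be a unimodular constant. Equating numerator and denominator coefficients leads to the quadratic $(\mu-1)^2=0$, hence $\mu=1$, which again gives $\omega_2=\eta_2$. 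The remaining possibility $\beta_1=\beta_2$ admits a similar direct treatment and renders the cross-ratio statement vacuous, so in all cases the claim follows. Throughout the argument the hypothesis $t\in\Delta_n^\circ$ is used to ensure $s,t'\in(0,1)$, and non-collinearity of the $\alpha$'s ensures that $\eta'$ is a priori well-defined.
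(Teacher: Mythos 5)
Your proposal is correct and follows exactly the route the paper takes: the paper's proof is a two-line appeal to Lemma~\ref{lem:propmagic} and the inductive definition of $F_{\alpha,\omega,t}$, which is precisely the single (for $n=2$) or nested double (for $n=3$) application you carry out. Your extra care with the degenerate subcases ($\gamma=\beta_3$ and $\beta_1=\beta_2$), handled by the direct computation leading to $(\mu-1)^2=0$, only fills in details the paper leaves implicit.
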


\begin{proof}
For $n=2$ this is an immediate consequence of Lemma~\ref{lem:propmagic}. If $n\geq 3$ it suffices to use directly the definition of $F_{\alpha, \omega, t}$ together with Lemma~\ref{lem:propmagic}.
\end{proof}

The subsequent results may viewed as inverse to that construction.
\begin{lemma}\label{lem:convexcombination}
Let $\alpha_1,\ldots, \alpha_n\in \DD$. Assume that $F:\DD^n\to \DD$ is a holomorphic function such that $F(\lambda m_{\alpha_1}(\lambda), \ldots, \lambda m_{\alpha_n}(\lambda)) = \lambda m_\gamma(\lambda)$ for $\la\in \DD$, where $\gamma\in \DD$. Then $\gamma$ is a convex combination of $\alpha_1,\ldots, \alpha_n$.

Moreover, for any $t\in [0,1]$ the following equality holds:
\begin{equation}
F(\lambda m_{t\alpha_1}(\lambda), \ldots, \lambda m_{t\alpha_n}(\lambda)) = \lambda m_{t\gamma}(\lambda),\quad \lambda\in \DD.
\end{equation}
\end{lemma}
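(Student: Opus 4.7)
The plan is to promote the parameter $t$ to a complex variable and then push the identity from $t=1$ to all of $[0,1]$ via a two-variable maximum principle. Introduce
$$H(\lambda,t):=F\bigl(\lambda m_{t\alpha_1}(\lambda),\ldots,\lambda m_{t\alpha_n}(\lambda)\bigr).$$
Since $m_{t\alpha_j}$ is a self-map of $\DD$ precisely when $|t|<1/|\alpha_j|$, the function $H$ is jointly holomorphic on $\DD\times\{|t|<R\}$, where $R:=\min_{\alpha_j\neq 0}1/|\alpha_j|>1$, with $|H|\leq 1$. Because $H(0,t)=F(0)=0$, we factor $H(\lambda,t)=\lambda\,\tilde H(\lambda,t)$ with $\tilde H$ holomorphic and bounded by one in modulus; computing $\partial_\lambda H$ at $\lambda=0$ yields $\tilde H(0,t)=\sum_j c_j t\alpha_j=t\gamma$, where $c_j:=\partial_{z_j}F(0)$.

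Next, shrink the parameter disc (only if $\gamma\neq 0$) to $\{|t|<R'\}$ with $R':=\min(R,1/|\gamma|)>1$ and set
$$\hat H(\lambda,t):=m_{t\gamma}\bigl(\tilde H(\lambda,t)\bigr).$$
Because $\tilde H(0,t)=t\gamma$ and $m_{t\gamma}(t\gamma)=0$, we have $\hat H(0,t)=0$, so $\hat H(\lambda,t)=\lambda\,K(\lambda,t)$ with $K$ holomorphic, and the Schwarz lemma applied to $\hat H(\cdot,t):\DD\to\overline{\DD}$ for each fixed $t$ gives $|K|\leq 1$. The crucial observation is that at $t=1$ the hypothesis yields $\tilde H(\lambda,1)=m_\gamma(\lambda)$, and since $m_\gamma$ is an involution this forces $\hat H(\lambda,1)=\lambda$, whence $K(\lambda,1)=1$ for every $\lambda\in\DD$. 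Because $1$ is interior to $\{|t|<R'\}$, the maximum modulus principle applied slice-wise to $t\mapsto K(\lambda,t)$ forces $K\equiv 1$. Consequently $\tilde H(\lambda,t)=m_{t\gamma}(\lambda)$, so
$$F\bigl(\lambda m_{t\alpha_1}(\lambda),\ldots,\lambda m_{t\alpha_n}(\lambda)\bigr)=\lambda m_{t\gamma}(\lambda)$$
holds wherever both sides are defined, and in particular for every $t\in[0,1]$, which is the second assertion of the lemma.

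For the convex-combination claim, specialize the identity just proved to $t=0$: it reads $F(-\lambda^2,\ldots,-\lambda^2)=-\lambda^2$. Since $w=-\lambda^2$ ranges over all of $\DD$ as $\lambda$ does, this means the diagonal map $w\mapsto F(w,\ldots,w)$ is the identity on $\DD$, and differentiating at $0$ gives $\sum_j c_j=1$. Independently, the polydisc Schwarz lemma $|F(z)|\leq\max_j|z_j|$ (valid because $F(0)=0$) forces $\sum_j|c_j|\leq 1$, while $\lambda$-differentiation of the original hypothesis at $\lambda=0$ gives $\gamma=\sum_j c_j\alpha_j$. Combining $\sum_j c_j=1$ with $\sum_j|c_j|\leq 1$ yields equality in the triangle inequality $|{\sum_j c_j}|\leq\sum_j|c_j|$, forcing every $c_j$ to be a non-negative real; hence $\gamma=\sum_j c_j\alpha_j$ is a genuine convex combination of $\alpha_1,\ldots,\alpha_n$.

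The point that deserves care is the strict inequality $R'>1$, which is what makes $t=1$ an interior point of the parameter disc and permits the maximum modulus principle to bite. This uses only $|\alpha_j|,|\gamma|<1$; after that, every step reduces to a routine application of the Schwarz lemma.
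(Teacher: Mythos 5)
Your strategy --- complexify $t$, note that the hypothesis forces an equality case at the interior point $t=1$, and propagate it by the maximum principle --- is sound in spirit and close to the paper's own argument (which runs the Schwarz inequality $|g_s(0)|^2+|g_s'(0)|\le 1$ as a polynomial inequality in the real parameter $s$ with equality at the interior point $s=1$). However, there is a genuine gap at the decisive step. The function $\hat H(\lambda,t)=m_{t\gamma}\bigl(\tilde H(\lambda,t)\bigr)$ is \emph{not} holomorphic in $t$ when $\gamma\neq 0$: indeed $m_{t\gamma}(w)=\frac{t\gamma-w}{1-\overline{t\gamma}\,w}$ contains $\bar t$ in the denominator, so $K(\lambda,\cdot)$ is merely real-analytic and the slice-wise maximum modulus principle in $t$ cannot be invoked. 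If you instead use the holomorphic-in-$t$ substitute $\frac{t\gamma-w}{1-t\bar\gamma w}$, then for non-real $t$ this M\"obius map no longer preserves $\overline{\DD}$ (on $\TT$ its modulus is $|t\gamma-e^{i\theta}|/|e^{i\theta}-\bar t\gamma|$, which equals $1$ only for real $t$), so the bound $|K|\le 1$ is available only on the real axis and the maximum modulus principle again has nothing to act on. Since your proof of the convexity statement rests on the $t=0$ case of the identity, it inherits this gap.

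The argument is repairable by running your idea on a quantity that genuinely is holomorphic in $t$, e.g. $h(t):=\partial_\lambda\tilde H(0,t)$. Schwarz--Pick for $\tilde H(\cdot,t):\DD\to\DD$ gives $|h(t)|\le 1-|\tilde H(0,t)|^2=1-|t|^2|\gamma|^2$ on $\{|t|<R\}$, while the hypothesis gives $h(1)=m_\gamma'(0)=-(1-|\gamma|^2)$. Then $\psi(t):=-h(t)/(1-t^2|\gamma|^2)$ is holomorphic on $\{|t|<\min(R,1/|\gamma|)\}$ and satisfies $|\psi|\le 1$ there because $|1-u^2|\ge 1-|u|^2$; since $\psi(1)=1$ at an interior point, $\psi\equiv 1$. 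For real $t$ this is exactly the equality case of Schwarz--Pick, so $\tilde H(\cdot,t)$ is the unique disc automorphism with value $t\gamma$ and derivative $-(1-|t\gamma|^2)$ at the origin, namely $m_{t\gamma}$. With that replacement your remaining steps (including the deduction of convexity from $F(w,\ldots,w)=w$ together with $\sum_j|c_j|\le 1$) are correct.
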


\begin{proof}
For $0\leq s< s_0:=\min |\alpha^{-1}_j|$ put $$g_s(\lambda):= F(\lambda m_{s\alpha_1}(\lambda), \ldots, \lambda m_{s\alpha_n}(\lambda))/\lambda,\quad \lambda\in \DD.$$

Clearly, $g_s$ is a self-mapping of the unit disc, so it follows from the Schwartz lemma that $|g_s(0)|^2 + |g_s'(0)|\leq 1$ for any any $s\in [0,s_0)$. Moreover, the assumption implies that the equality here is attained for $s=1$, even more the equality $g_1'(0)= -1 +|g_1(0)|^2$ holds. Rewriting the above inequality we get 
$$|\sum_j F_{z_j}'(0)  s \alpha_j|^2+ |\sum_j F_{z_j}'(0)(-1+ |s\alpha_j|^2) + \frac12 \sum_{i,j} F_{z_i, z_j}''(0) s^2 \alpha_i \alpha_j|\leq 1.$$ In particular, the inequality
$$
s^2\big( |\sum_j F_{z_j}'(0) \alpha_j|^2 - \re (\sum_j F_{z_j}'(0)|\alpha_j|^2 - \frac 12 \sum_{i,j} F_{z_i z_j}''(0) \alpha_j \alpha_j)\big) + \re (\sum_j F_{z_j}'(0)) \leq 1
$$
holds for $s\in [0,s_0]$
with the equality attained for $s=1$. This means the left hand side of the inequality above is constant viewed as a function of $s$. Thus putting $s=0$ we find that $\re(\sum_j F_{z_j}'(0))=1$. On the other hand $\sum_j |F_{z_j}'(0)|\leq 1,$ as $F$ maps the polydisc into the disc. This means that $t_j:=F_{z_j}'(0)\geq 0$ and consequently $t=(t_1,\ldots, t_n)\in \Delta$. Clearly, $\gamma=g_1(0) = \sum_j F_{z_j}'(0) \alpha_j = t\cdot \alpha$.

Note that we have shown that $g_s'(0)= -1 + |g_s(0)|^2$ for any $s\in [0,s_0)$. Thus the second part of the assertion is a consequence of computations that have been already carried out.
\end{proof}

Using this result it is very elementary to deduce the uniqueness of left inverses in the bidisc.

\begin{prop}\label{prop: 2, uniqueness}
Let $F:\DD^2\to \DD$ be such that $F(\lambda m_{\alpha_1}(\lambda), \lambda m_{\alpha_2}(\lambda))=\lambda m_\gamma (\lambda)$, $\la\in \DD$, where $\alpha_1\neq \alpha_2$, $\gamma=t\alpha_1 + (1-t) \alpha_2$, and $t\in (0,1)$. Then $F$ is uniquely determined (and thus it is of the form $F_{t,\alpha}$).
\end{prop}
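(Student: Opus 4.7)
The plan is to use Lemma \ref{lem:convexcombination} to promote the hypothesis into a one-parameter family of interpolation identities, then complexify that parameter to obtain a holomorphic identity on an open subset of $\CC^2$; this will reconstruct $F$ on an open piece of $\DD^2$, after which uniqueness on all of $\DD^2$ follows from the identity theorem.

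First I would apply Lemma \ref{lem:convexcombination} to $F$. Since the decomposition $\gamma = t\alpha_1 + (1-t)\alpha_2$ is forced (as $\alpha_1\neq\alpha_2$), the lemma yields
\[ F\bigl(\lambda m_{s\alpha_1}(\lambda),\lambda m_{s\alpha_2}(\lambda)\bigr) = \lambda m_{s\gamma}(\lambda),\quad s\in[0,1],\ \lambda\in\DD. \]
The key move is to read this as the real slice of a holomorphic identity in two complex variables. Setting
\[ \tilde p(s,\lambda) = \frac{\lambda(s\alpha_1-\lambda)}{1-s\bar\alpha_1\lambda},\quad \tilde q(s,\lambda) = \frac{\lambda(s\alpha_2-\lambda)}{1-s\bar\alpha_2\lambda},\quad \tilde r(s,\lambda) = \frac{\lambda(s\gamma-\lambda)}{1-s\bar\gamma\lambda}, \]
with $\bar\alpha_1,\bar\alpha_2,\bar\gamma$ now regarded as fixed complex constants, these become rational maps of $(s,\lambda)\in\CC^2$ that agree with the original expressions for $s\in\RR$. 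On a polydisc $U=D_s\times D_\lambda$ around some $(0,\lambda_0)$ with $\lambda_0\in\DD\setminus\{0\}$ sufficiently small, $\tilde p$ and $\tilde q$ remain $\DD$-valued, so
\[ h(s,\lambda) := F(\tilde p(s,\lambda),\tilde q(s,\lambda)) - \tilde r(s,\lambda) \]
is holomorphic on $U$. For each fixed $\lambda\in D_\lambda$, $s\mapsto h(s,\lambda)$ is holomorphic on $D_s$ and vanishes on $D_s\cap[0,1]$, which accumulates at $0$; hence the one-variable identity theorem gives $h(\cdot,\lambda)\equiv 0$ on $D_s$, and therefore $h\equiv 0$ on $U$.

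Next I would check local invertibility of $\tilde\phi := (\tilde p,\tilde q)$ at the seed point $(0,\lambda_0)$. A direct calculation gives
\[ \det d\tilde\phi_{(0,\lambda_0)} = -2\lambda_0^2\bigl[(\alpha_1-\alpha_2) - (\bar\alpha_1-\bar\alpha_2)\lambda_0^2\bigr], \]
which is nonzero for all sufficiently small $\lambda_0\neq 0$ because $\alpha_1\neq\alpha_2$. By the inverse function theorem, $\tilde\phi$ is a biholomorphism from a neighborhood of $(0,\lambda_0)$ onto a neighborhood $V$ of $\tilde\phi(0,\lambda_0)=(-\lambda_0^2,-\lambda_0^2)$ in $\DD^2$, and on $V$ the identity $h\equiv 0$ rewrites as $F = \tilde r\circ\tilde\phi^{-1}$. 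Thus $F$ is uniquely determined on the nonempty open set $V\subset\DD^2$, and by the identity theorem on the connected domain $\DD^2$ it is uniquely determined throughout. Since $F_{t,\alpha}$ does satisfy the hypothesis by Lemma \ref{lem:propmagic}, this forces $F = F_{t,\alpha}$.

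The main obstacle is the complexification step: justifying that the real-analytic one-parameter family of identities from Lemma \ref{lem:convexcombination} upgrades to a genuine holomorphic identity in two complex variables. Once this is in place, the Jacobian computation at the degenerate seed point $s=0$ and the identity principle combine in a routine way to finish the proof.
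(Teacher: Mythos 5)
Your proposal is correct and follows essentially the same route as the paper: apply Lemma~\ref{lem:convexcombination} to get the one-parameter family of identities, complexify the parameter $s$ (treating $\bar\alpha_1,\bar\alpha_2,\bar\gamma$ as constants), observe that the complexified disc family covers a nonempty open subset of $\DD^2$, and conclude by the identity principle. Your explicit Jacobian computation at $(0,\lambda_0)$ just makes precise the step the paper calls ``quite direct to observe.''
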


\begin{proof}
The result is a consequence of the proof of Lemma~\ref{lem:convexcombination}. Precisely, we have shown that for any $s\in [0,1]$ the equality $$F(\lambda m_{s\alpha_1}(\lambda), \lambda m_{s\alpha_2}(\lambda)) = \lambda m_{s\gamma}(\lambda),\quad \lambda \in \DD,$$ holds. Let us complexify the mappings $t\mapsto m_{t\alpha_j}(\lambda)$ in a neighborhood of $0$ to holomorphic functions $t\mapsto m_j(t,\lambda)$, $j=1,2$. It is quite direct to observe that the set of points $(\lambda m_1(t ,\lambda),\lambda m_2(t,\lambda))$, where $t$ and $\lambda$ are small, covers an open, non-empty subset of the bidisc. Thus, making use of the identity principle one gets the assertion.
\end{proof}

\begin{remark}
Observe that the uniqueness presented above does not remain true for $n\geq 3$. More precisely, there are many left inverses to $\lambda\mapsto (\lambda m_{\alpha_1}(\lambda),\ldots, \lambda m_{\alpha_n}(\lambda))$ for any $n\geq 3$. 

Note that it suffices to show the lack of uniqueness only for $n=3$. 
Actually, having two different left inverses to analytic disc $\phi: \lambda\mapsto (\lambda m_{\alpha_1}(\lambda), \lambda m_{\alpha_2} (\lambda), \lambda m_{\alpha_3}(\lambda))$ and repeating the construction presented in Section~\ref{fs} one may easily define two different left inverses to $\phi$.

If $n=3$ we may construct left inverses in two ways: either as we did it, that is considering $\Phi_{t,\omega}(\Phi_{s,\eta}(z_1,z_2), z_3)$, $z\in \DD^3$, or taking $\Phi_{t',\omega'} (z_1, \Phi_{s',\eta'}(z_2, z_3))$, $z\in \DD^3$, with suitable chosen $t,s,t',s'\in (0,1)$ and $\omega, \omega', \eta, \eta'\in \TT$. Observe that these left inverses are not identically equal providing that $\alpha_1,\alpha_2$ and $\alpha_3$ are not co-linear. Actually, to avoid tedious calculations one may compare coefficients of the term $z_1z_2z_3$ to get that the equality $\Phi_{t,\omega}(\Phi_{s,\eta}(z_1,z_2), z_3)\equiv \Phi_{t',\omega'} (z_1, \Phi_{s',\eta'}(z_2, z_3))$ on $\DD^3$ would imply that $\omega \eta = \omega' \eta'$. It is not difficult to see that this relation may be satisfied only if $\alpha_1$, $\alpha_2,$ $\alpha_3$ are co-linear.

On the other hand the argument presented in the proof of Proposition~\ref{prop: 2, uniqueness} shows that all functions  interpolating a non-degenerate extremal problem in $\DD^3$ coincide on a 2 dimensional analytic variety of a very special form.
\end{remark}

\section{N-extremal mappings; definition and basic properties}\label{sec:extr}

In the present section we shall recall the notion of $3$-extremal mappings in domains od $\CC^n$. Definitions presented here occurred recently in \cite{Kos-Zwo 2014} and \cite{Agl-Lyk-You 2013}.

Assume that $D\subset \mathbb C^n$ is a domain and $N\geq 2$. Fix pairwise distinct points $\lambda_1,\ldots,\lambda_N\in \mathbb D$ and points $z_1,\ldots,z_N\in D$. As usual, the interpolation data
\begin{equation}
 \lambda_j\mapsto z_j,\ \mathbb D\to D,\quad j=1,\ldots,m,
\end{equation}
is said to be {\it extremally solvable} if it is solvable i.e. there is a map $h\in\OO(\mathbb D,D)$ such that $h(\lambda_j)=z_j$, $j=1,\ldots,N$,
and there is no $f$ holomorphic on a neighborhood of $\overline{\mathbb D}$ with the image in $D$ such that $f(\lambda_j)=z_j$, $j=1,\ldots,N$.

Note that the latter condition is equivalent to the fact that there is no $h\in \mathcal O(\DD, D)$ such that $h(\lambda_j)= z_j$, $j=1,\ldots, N$, and $h(\DD)$ is relatively compact in $D$.

\begin{definition}\label{def:extr}(see \cite{Kos-Zwo 2014}).
Let $f:\DD\to D$ be an analytic disc. Let $\lambda_1,\ldots,\lambda_N\in \DD$ be distinct points. We say that $f$ is a {\it weak $N$-extremal with respect to $\lambda_1,\ldots,\lambda_N$} if the problem 
\begin{equation}\label{eq:aglmextr}
\lambda_j\mapsto f(\lambda_j)
\end{equation} 
is extremally solvable.

Naturally, we shall say that $f$ is a {\it weak $N$-extremal} if it is a weak extremal with respect to some $N$ distinct points in the unit disc.
\end{definition}

The idea of the above definition has roots in \cite{Agl-Lyk-You 2013}, where authors introduced the notion of extremal maps, demanding that problem \eqref{eq:aglmextr} is extremal for all choices of pairwise distinct points $\la_1,\ldots, \la_N$. Definition~\ref{def:extr} is strictly weaker and corresponds to standard extremals introduced by Lempert for $N=2$. However, for many domains classes of $N$-extremals and weak $N$-extremals coincide (see \cite{Kos-Zwo 2014}). This is the case for among others homogenous (i.e. with transitive group of holomorphic automorphisms) and balanced domains. In particular, both definitions are equivalent for the polydisc.
\begin{definition}(see \cite{Kos-Zwo 2014}).
Let $N\geq 2$. An analytic disc $f:\DD\to D$ is called a complex $N$-geodesic if there is a holomorphic function $F:D\to \DD$ such that $F\circ f$ is a Blaschke product of degree $N-1$.

The function $F$ is called a left inverse to the complex $N$-geodesic $f$.
\end{definition}

Any complex $N$-geodesic is $N$-extremal. The following self-evident observation may be viewed as a starting point of our considerations:
\begin{lemma}\label{lem:extr}
Let $f:\DD \to \DD^n$ be a complex $N$-geodesic. Let $F:\DD^n\to \DD$ be its left inverse. Let $b$ denote the Blaschke product $F\circ f$. Then for any pairwise distinct $\la_1,\ldots, \lambda_N\in \DD$ the $N$-point Pick interpolation problem in $\DD^n$:
$$f(\lambda_j)\mapsto b(\la_j),\quad j=1,\ldots, N,$$ is extremal.
\end{lemma}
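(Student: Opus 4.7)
The plan is to reduce the extremality question on $\DD^n$ directly to the classical one-dimensional Pick theorem by composing with $f$. First I would notice that $F$ is itself an interpolant with $\|F\|_\infty\le 1$, so failure of extremality would mean the existence of some $G\in H^\infty(\DD^n)$ with $\|G\|_\infty<1$ and $G(f(\lambda_j))=b(\lambda_j)$ for every $j=1,\ldots,N$.

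Assuming such a $G$ exists, I would pull back to the disc and set $h:=G\circ f$; then $h\in\mathcal O(\DD,\DD)$ satisfies $\|h\|_\infty\le \|G\|_\infty<1$ together with $h(\lambda_j)=b(\lambda_j)$ for all $j$. On the one-dimensional side, $b$ is a Blaschke product of degree exactly $N-1$ interpolating these same $N$ nodes, so the associated Pick matrix has rank $N-1$; by the classical Pick--Schur uniqueness, $b$ is then the only element in the closed unit ball of $H^\infty(\DD)$ solving the data $\lambda_j\mapsto b(\lambda_j)$. Consequently $h\equiv b$, which contradicts $\|h\|_\infty<1=\|b\|_\infty$.

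There is no substantive obstacle here: the whole argument is really a one-line reduction, which is precisely why the authors call the statement a self-evident observation. Its interest lies not in the proof but in what it delivers, namely a geometrically transparent supply of extremal $N$-point Pick problems on $\DD^n$: one such problem for every complex $N$-geodesic $f$ and every choice of $N$ distinct points in its domain. Theorem~\ref{main} will then assert that in the three-point polydisc setting this construction is in fact exhaustive, so the lemma serves as the basic source of examples whose converse direction the paper is devoted to establishing.
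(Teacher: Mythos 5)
Your argument is correct and is exactly the paper's (one-line) proof spelled out: compose a hypothetical interpolant of norm less than $1$ with $f$ and contradict the extremality of the Blaschke product $b$ of degree at most $N-1$ for the resulting $N$-point problem in $\DD$. The only cosmetic caveat is that $b$ need not have degree exactly $N-1$, but the Pick matrix is singular and the uniqueness/extremality conclusion holds for any degree at most $N-1$, so nothing changes.
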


\begin{proof}
The result is a direct consequence of the extremality of Blaschke products.
\end{proof}

Finally recall that all classes of analytic discs listed above coincide in the polydisc. Moreover, thanks to argument involving Schur's reduction, they may be described in a very simple way: an analytic disc $f:\DD\to \DD^n$ is $N$-extremal if and only if $f_j$ is a Blaschke product of degree at most $N-1$ for some $j=1,\ldots, n$.

\section{Non-degenerate and maximal dimensional problem for the bidisc and tridisc}

Having a function $F:\DD^n\to\DD$ interpolating an extremal three-point Pick problem we are looking for a 3-extremal $\phi$ passing through the given nodes and such that $F$ is its left inverse.

\begin{remark}\label{rem:degreeb}
It is simply to see that if a component $\varphi_j$ is not a Blaschke product of degree $2$, then the relation $F\circ \phi =b$, where $b$ is a Blaschke product of degree at most $2$, implies that $F$ does not depend on $j$-variable.
\end{remark}

Roughly speaking, we shall find an (open) set of points such that the interpolation problem is extremal, non-degenerate, and of maximal dimension, where the latter term means shortly that the problem is strictly $n$-dimensional. It will be relatively simple to check that in the case of the bidisc the set constructed in this way is connected. The situation is more complicated in the tridisc. To overcome this difficulty we shall exploit some technical approach.

It is interesting on its own whether the set of points for which the interpolating function is extremal, non degenerate, and of maximal dimension is in general connected. The positive answer to this problem would simplify some parts of the proof.

\bigskip

For an $n$-fold permutation $\sigma$ and a point $z$ in $\CC^n$ denote $z_\sigma=(z_{\sigma(1)},\ldots, z_{\sigma(n)})$. Moreover, let $\sigma(z,w,\xi)= (z_\sigma, w_\sigma,\xi)$ for $(z,w,\xi)\in \CC^n\times \CC^n\times \PP_1$.

Let $\mathcal A_d$ denote the set of points in $\dD_n\times \PP_1$, where $\dD_n$ is given by \eqref{eq:dn}, such that the extremal Pick interpolation problem corresponding to $(z,w,\xi)$ is degenerate. Note that 
\begin{equation}\label{eq:A_dsubs}
\mathcal A_d\subset \mathcal A_0\cup \mathcal A_d'\cup \mathcal A_d'',
\end{equation} 
where 
\begin{align*} \mathcal A'_0=\{(z,w,\xi):\  \xi &= [z_i:w_i] \text{ for some } i\},\\ 
\mathcal A_d'= \mathcal A_z' \cup \mathcal A_w'=\{& (z,w,\xi):\  |z_i|=|z_j| \text{ and } \xi\neq [0:1]\ \text{ for some } i\neq j\} 
\\
\cup
\{ (z,w,&\xi):\   |w_i|=|w_j| \text{ and }\xi\neq [1:0] \text{ for some } i\neq j\},
\text{ and}\\
\mathcal A_d'' = \{(z,w,\xi):\  \rho&(z_i,z_j)=\rho(w_i,w_j)\text{ and } \xi\neq[1:1] \text{ for some } i\neq j\}.
\end{align*}
It is quite straightforward to see that $\mathcal A_d$ is closed in $\dD_n\times \PP_1$. This fact together with inclusion \eqref{eq:A_dsubs} gives:

\begin{lemma}\label{lem:nondeg-connected} For any subdomain $G$ of $\dD_n$ the set $(G\times \PP_1)\setminus \mathcal A_d$ is a domain. In particular, $(\dD_n\times \PP_1)\setminus \mathcal A_d$ is a domain.
\end{lemma}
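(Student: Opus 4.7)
The plan is to prove the two required properties separately. Openness of $(G\times\PP_1)\setminus \mathcal A_d$ will come immediately from closedness of $\mathcal A_d$ in $\dD_n\times \PP_1$, while connectedness will follow from the inclusion \eqref{eq:A_dsubs} together with the recalled fact that closed subsets of sufficiently small Hausdorff measure are removable for connectivity in a domain.

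For closedness of $\mathcal A_d$, I would use the continuity of $(z,w,\xi)\mapsto t_{z,w,\xi}$ proved earlier in the section, so that the target values $t\sigma$ and $t\tau$ depend continuously on $(z,w,\xi)$. Degeneracy of the extremal problem at $(z,w,\xi)$ is then equivalent to the saturation of at least one of the three two-point Kobayashi/Carath\'eodory equalities $|t\sigma|=\max_i|z_i|$, $|t\tau|=\max_i|w_i|$, $\rho(t\sigma,t\tau)=\max_i\rho(z_i,w_i)$, these being the precise conditions for two-point sub-extremality on the polydisc. Each of these equalities is a closed condition on $(z,w,\xi)$, so $\mathcal A_d$ is a finite union of closed sets, and hence closed.

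For connectedness I would then examine the three pieces of the containment \eqref{eq:A_dsubs}. The set $\mathcal A_0'$ is already of real codimension two in $\dD_n\times\PP_1$, since each locus $\{\xi=[z_i:w_i]\}$ is cut out by a single \emph{complex} equation. On $\mathcal A_d'$ and $\mathcal A_d''$ the respective defining equalities $|z_i|=|z_j|$, $|w_i|=|w_j|$, and $\rho(z_i,z_j)=\rho(w_i,w_j)$ give only real codimension one; however, a point of $\mathcal A_d$ lying in such a slice must additionally satisfy one of the two-point extremality equations identified above, producing a second, independent real equation. Consequently $\mathcal A_d\cap \mathcal A_d'$ and $\mathcal A_d\cap \mathcal A_d''$ are of real codimension at least two, and together with $\mathcal A_0'$ this places $\mathcal A_d$ inside a closed real-analytic subset of $\dD_n\times\PP_1$ of real codimension $\geq 2$, hence of vanishing Hausdorff measure in the relevant dimension. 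Invoking the removability statement recalled in the preliminaries then gives that $(G\times\PP_1)\setminus \mathcal A_d$ is a domain, and the special case $G=\dD_n$ yields the ``in particular'' assertion.

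The main obstacle I expect is the independence check for the two real equations in the cases $\mathcal A_d\cap \mathcal A_d'$ and $\mathcal A_d\cap \mathcal A_d''$: one must verify that the extremality equation coming from $\mathcal A_d$-membership is not a consequence of the defining equation of $\mathcal A_d'$ or $\mathcal A_d''$. I would handle this by showing that $t=t_{z,w,\xi}$ is not constant along the hypersurfaces $\{|z_i|=|z_j|\}$ or $\{\rho(z_i,z_j)=\rho(w_i,w_j)\}$, which in turn follows from the very definition of $t$ and the non-trivial dependence of the extremal value of the three-point problem on the nodes.
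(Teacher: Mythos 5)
Your treatment of openness is fine: closedness of $\mathcal A_d$ does follow from the continuity of $(z,w,(\sigma,\tau))\mapsto t_{z,w,(\sigma,\tau)}$ together with the characterization of two-point extremality by the equalities $|t\sigma|=\max_i|z_i|$, $|t\tau|=\max_i|w_i|$, $\rho(t\sigma,t\tau)=\max_i\rho(z_i,w_i)$, and this is exactly what the paper asserts without proof. The connectedness argument, however, has a genuine gap: $\mathcal A_d$ is \emph{not} contained in a closed set of real codimension $\geq 2$, because the ``second equation'' you want to impose on the slice $\{|z_i|=|z_j|\}$ is not independent of the slice condition. Concretely, take $n=2$ and $(z,w)\in\dD_2$ with $|z_1|=|z_2|=r$ and $z_2w_1\neq z_1w_2$. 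Fix $\lambda$ with $|\lambda|=r$ and set $a=\lambda/z_1$, $b=\lambda/z_2\in\TT$. Since $\Phi_{s,\eta}(u,u)=u$, every magic function $F_{s,\eta}(Z)=\Phi_{s,\eta}(aZ_1,bZ_2)$, $s\in[0,1]$, $\eta\in\TT$, maps $0\mapsto 0$ and $z\mapsto\lambda$, and the two-point problem $0\mapsto 0$, $z\mapsto\lambda$ is already extremal because $|\lambda|=\max_i|z_i|$. Hence for every $\mu$ in the set $\{\Phi_{s,\eta}(aw_1,bw_2):\ s\in[0,1],\ \eta\in\TT\}$, which is two-real-dimensional when $aw_1\neq bw_2$, the three-point problem $0\mapsto 0$, $z\mapsto\lambda$, $w\mapsto\mu$ is extremal (so $t_{z,w,(\lambda,\mu)}=1$) and degenerate. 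The fiber of $\mathcal A_d$ over such a $(z,w)$ therefore has nonempty interior in $\PP_1$, and $\mathcal A_d$ has real codimension one along $\{|z_1|=|z_2|\}$. No removability statement for small sets can apply, so the proposed route collapses at its central step; your anticipated ``independence check'' is precisely the point that fails.

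The paper's proof is organized around this obstruction. Only the analytic set $\mathcal A'_0=\{\xi=[z_i:w_i]\}$ is removed by a codimension count; the remaining part $\mathcal A=\mathcal A_d\cap(\mathcal A_d'\cup\mathcal A_d'')$ is handled by an explicit path construction. One first joins the two base points by a curve $\gamma$ in $G$ avoiding all pairwise intersections of the hypersurfaces $\{|z_i|=|z_j|\}$, $\{|w_i|=|w_j|\}$, $\{\rho(z_i,z_j)=\rho(w_i,w_j)\}$ (possible because those intersections have codimension two), so that at each time $\gamma(t)$ meets at most one type of hypersurface. By the very definitions of $\mathcal A_z'$, $\mathcal A_w'$, $\mathcal A_d''$, the fiber of $\mathcal A$ over $\gamma(t)$ then always omits the corresponding marked point $[0:1]$, $[1:0]$ or $[1:1]$ of $\PP_1$, and since the three bad time sets are closed and pairwise disjoint one can thread a continuous lift $t\mapsto(\gamma(t),\xi(t))$ through these marked points. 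If you want to salvage your write-up, you must replace the codimension argument for $\mathcal A_d'\cup\mathcal A_d''$ by an argument of this fiberwise type.
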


\begin{proof}
Note that $\mathcal A:=\mathcal A_d\cap (\mathcal A_d'\cup \mathcal A_d'')$ is closed in $\dD_n$. Since $\mathcal A'_0$ is analytic, it suffices to show the connectedness of $(G\times \PP_1)\setminus \mathcal A$. We shall show that it is path-connected.

Fix two points $(z',w',\xi')$ and $(z'', w'',\xi'')$ in $(G\times \PP_1)\setminus \mathcal A$. Losing no generality, we may assume that none of equalities $|z_i'|=|z_j'|$, $|w_i'|=|w_j'|$, $|z_i''|=|z_j''|$, $|w_i''|=|w_j''|$ is satisfied for any $i\neq j$.

Connect $(z',w')$ and $(z'',w'')$ with a curve $\gamma:[0,1]\to G$ so that $\gamma$ does not touch any intersection of two different sets of the form $\{(z,w)\in \dD_n:\ |z_i|=|z_j|\}$, $\{(z,w)\in \dD_n:\ |w_i|=|w_j|\}$ and $\{(z,w)\in \dD_n:\ \rho (z_i, z_j)= \rho(w_i,w_j),\}$, where $i\neq j$. This is possible as the Hausdorff dimension of any such intersection is equal to $2n-2$.

This construction ensures us that none of points $(\gamma(t), \xi)$, where $t\in [0,1]$ and $\xi \in \PP_1$, lies simultaneously in some two sets of the form $\mathcal A_z'$, $\mathcal A_w'$ or $\mathcal A_d''$.

Making use of the inclusion $\mathcal A\subset \mathcal A_d'\cup \mathcal A_d''$ one can find a curve $\xi:[0,1]\to \PP_1$ such that $t\mapsto (\gamma(t), \xi(t))$ connects $(z',w',\xi')$ and $(z'', w'', \xi'')$ in $(\dD_n\times \PP_1)\setminus \mathcal A$.
\end{proof}

\begin{remark}
Note that  using formulas for left inverses in Lemmas~\ref{main:bidisc}, \ref{main:tridisc} and \ref{main:poli} it is quite straightforward to describe explicitly the set $\mathcal A_d\subset \dD_n\times \PP_1$.
\end{remark}
\subsection{Bidisc}

If $z,w\in \dD_2$ are such that either $|z_2|<|z_1|$, $|w_2|<|w_1|,$ and $\rho(\frac{z_2}{z_1}, \frac{w_2}{w_1})\leq \rho(z_1, w_1)$, or $z_2=\omega z_1$, $w_2=\omega w_1$ for some unimodular constant $\omega$, then there is an analytic disc of the form $\lambda \mapsto (\lambda, \varphi(\lambda)),$ where $\phi\in \mathcal O(\DD, \DD)$, passing through the nodes $0,z,w$. In particular, for any $\xi\in \PP_1$ there is a function interpolating the extremal problem corresponding to the data $(z,w,\xi)$ that depends only on one variable, i.e. the problem is 1-dimensional.

Denote $U'_1=\{(z,w)\in \dD_2:\ |z_2|<|z_1|,\ |w_2|<|w_1|,\ \rho (\frac{z_2}{z_1}, \frac{w_2}{w_1})\leq \rho(z_1, w_1)\}$, $U''_1:=\sigma(U'_1)$, where $\sigma$ is a 2-fold permutation permuting $1$ and $2$. Let $\mathcal I:=\{(z,w)\in \dD_2:\ z_2 = \omega z_1, w_2= \omega w_1\text{ for some } \omega \in \TT\}$. Put 
$$U_1:= U'_1 \cup U''_1\cup  \mathcal I.$$ 
Observe that $U_1$ is closed in $\dD_2$. 
\begin{lemma}\label{cl:D-Udomain}
$\dD_2 \setminus U_1$ is a domain.
\end{lemma}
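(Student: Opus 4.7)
My plan has three ingredients: verify $U_1$ is closed in $\dD_2$ (so the complement is open), exhibit a connected open reference subset of $\dD_2\setminus U_1$, and show every other point of $\dD_2\setminus U_1$ can be path-connected to this reference set.

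For closedness, $\mathcal I$ is evidently closed. For $U_1'$, take a sequence $(z^k,w^k)\in U_1'$ with limit $(z,w)\in \dD_2$; the strict modulus inequalities pass weakly to $|z_2|\leq|z_1|$ and $|w_2|\leq|w_1|$, but equality in either would send $z_2^k/z_1^k\to z_2/z_1\in\TT$ (resp.\ $w_2^k/w_1^k\to w_2/w_1\in\TT$), forcing $\rho(z_2^k/z_1^k,w_2^k/w_1^k)\to\infty$ and contradicting the bounded inequality $\leq\rho(z_1^k,w_1^k)$. Hence strict inequalities persist and the closed hyperbolic condition survives the limit; the symmetric argument handles $U_1''$, so $U_1$ is closed.

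The reference open set is $P:=\{(z,w)\in\dD_2:|z_1|>|z_2|,\ |w_1|<|w_2|\}$, which is connected and lies in $\dD_2\setminus U_1$ since each of $U_1'\subset\{|w_2|<|w_1|\}$, $U_1''\subset\{|z_2|>|z_1|\}$, $\mathcal I\subset\{|z_1|=|z_2|\}$ is incompatible with the defining inequalities of $P$. Any point in the mirror quadrant $\{|z_1|<|z_2|,|w_1|>|w_2|\}$ is connected to $P$ by a deformation of $z$ reversing the modulus ordering through $\{|z_1|=|z_2|,|w_1|>|w_2|\}$, safely outside $\mathcal I$ (which demands $|w_1|=|w_2|$). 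The critical case is $(z,w)\in\{|z_1|>|z_2|,|w_1|>|w_2|\}\setminus U_1'$, where $\rho(z_2/z_1,w_2/w_1)>\rho(z_1,w_1)$. Holding $z$ and $w_1$ fixed, deform $w_2$ in two stages. Stage one: rotate $w_2\mapsto e^{i\phi(t)}w_2$ so that $w_2/w_1$ moves along its centered modulus-circle to the antipode of $z_2/z_1$. Since the function $\phi\mapsto\rho(z_2/z_1,e^{i\phi}w_2/w_1)$ is unimodal with maximum at the antipode --- the identity $|m_a(re^{i\phi})|^2=(|a|^2+r^2-2r|a|\cos(\phi-\arg a))/(1+r^2|a|^2-2r|a|\cos(\phi-\arg a))$ is strictly decreasing in $\cos(\phi-\arg a)$ because its derivative has the sign of $(|a|^2-1)(1-r^2)<0$ --- in the correct rotation direction $\rho$ never decreases, preserving the strict inequality throughout. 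Stage two: scale $|w_2|$ radially past $|w_1|$; since $w_2/w_1$ now lies on the ray opposite to $z_2/z_1$, radial enlargement strictly increases $\rho$ (which diverges as $|w_2/w_1|\to 1$), and once $|w_2|>|w_1|$ we have entered $P$. The symmetric case in the quadrant $\{|z_1|<|z_2|,|w_1|<|w_2|\}\setminus U_1''$ is handled identically; boundary points with $|z_1|=|z_2|$ or $|w_1|=|w_2|$ not in $\mathcal I$ are interior to $\dD_2\setminus U_1$ and perturb into an adjacent quadrant.

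The main obstacle is the two-stage deformation in the critical case, whose viability rests on the unimodality of the hyperbolic distance $\rho(a,\cdot)$ on centered circles of $\DD$; once that monotonicity is in hand, the rest reduces to a direct inclusion check against the defining modulus conditions of $U_1'$, $U_1''$, and $\mathcal I$.
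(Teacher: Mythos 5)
Your overall strategy (case analysis by the modulus orderings of $(z_1,z_2)$ and $(w_1,w_2)$, with explicit paths into a reference quadrant) matches the paper's, and your two-stage deformation in the critical quadrant is a welcome, genuinely more detailed substitute for the paper's bare assertion that $\{|z_1|<|z_2|,\ |w_1|<|w_2|,\ \rho(z_1/z_2,w_1/w_2)>\rho(z_2,w_2)\}$ is connected with the base point in its closure; the unimodality computation for $\rho(a,re^{i\phi})$ is correct. However, there is a genuine gap in the mirror-quadrant step, and it is load-bearing because your ``symmetric case'' $\{|z_1|<|z_2|,|w_1|<|w_2|\}\setminus U_1''$ also terminates in the mirror quadrant rather than in $P$. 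A deformation of $z$ alone, with $w$ held fixed satisfying $|w_1|>|w_2|$, can never land in $P=\{|z_1|>|z_2|,\ |w_1|<|w_2|\}$; it lands in the quadrant $\{|z_1|>|z_2|,\ |w_1|>|w_2|\}$, which does meet $U_1'$, and you only check the path against $\mathcal I$. The step is repairable with tools you already have: either flip both orderings simultaneously (the crossing point has $|z_1|=|z_2|$ and $|w_1|=|w_2|$, hence avoids $U_1'$ and $U_1''$, and avoids $\mathcal I$ provided $z_2/z_1\neq w_2/w_1$ there, which one can arrange), or stop the $z$-deformation just past the wall $|z_1|=|z_2|$ --- where $|z_2/z_1|$ is close to $1$ while $|w_2/w_1|$ is fixed in $\DD$, so $\rho(z_2/z_1,w_2/w_1)$ is large and the point is in your critical case --- and then run the two-stage argument. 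As written, though, the claim ``is connected to $P$ by a deformation of $z$'' is false.

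A smaller issue: your closedness argument shows too much. $U_1'$ is \emph{not} closed in $\dD_2$: take $z^k=(a,ac_k)$, $w^k=(b,bc_k)$ with $c_k\to 1$ in $\DD$; then $\rho(z_2^k/z_1^k,w_2^k/w_1^k)=0$, so $(z^k,w^k)\in U_1'$, but the limit $((a,a),(b,b))$ lies in $\mathcal I$, not in $U_1'$. Your dichotomy misses the case where \emph{both} modulus inequalities degenerate to equalities with $z_2/z_1=w_2/w_1\in\TT$; there $\rho(z_2^k/z_1^k,w_2^k/w_1^k)$ need not blow up. The conclusion you need --- that the union $U_1=U_1'\cup U_1''\cup\mathcal I$ is closed --- survives, because every such exceptional limit falls into $\mathcal I$, but the argument should be restated for the union rather than for $U_1'$ alone.
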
 
\begin{proof} To see the connectedness fix any $t,s\in \DD\setminus \{0\}$. Note that the point $((t,t),(s,-s))$ lies in $\dD_2\setminus U_1$. We have to show that any point $(z,w)\in \dD_2\setminus U_1$ may be connected with $((t,t),(s,-s))$ by a curve in $\dD_2\setminus U_1$. 

Up to a permutation one of the following conditions holds: either $|z_1|\geq |z_2|$, $|w_1|\leq |w_2|$ or $|z_1|<|z_2|$, $|w_1|<|w_2|$, $\rho(\frac{z_1}{z_2}, \frac{w_1}{w_2})> \rho(z_2, w_2)$. If the point $(z,w)$ satisfies the first condition, the claim is trivial. If it satisfies the second one, it suffices to note that the set $\{(z,w)\in \DD^2\times \DD^2:\ |z_1|<|z_2|,\ |w_1|<|w_2|,\ \rho(\frac{z_1}{z_2}, \frac{w_1}{w_2})> \rho(z_2, w_2)\}\}$ is connected and that the point $((t,t),(s,-s))$ lies in its closure.
\end{proof}

Let us define $G = \dD_2\setminus U_1$ and
$$\Omega=\Omega(\DD^2):=(G\times \PP_1)\setminus \mathcal A_d.$$

\begin{lemma}\label{lem:Omega(D2)domain}
$\Omega(\DD^2)$ is a domain.
\end{lemma}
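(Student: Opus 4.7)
The plan is to deduce this lemma essentially as an immediate corollary of the two preceding lemmas, with a brief verification that their hypotheses are met.

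First, I would observe that $G = \dD_2 \setminus U_1$ is by Lemma~\ref{cl:D-Udomain} a domain: it is open in $\dD_2$ because $U_1$ (being a finite union of the closed sets $U'_1$, $U''_1$, and $\mathcal I$, each defined by non-strict inequalities and closed algebraic conditions) is closed in $\dD_2$, and it is connected by the path-connectedness argument already carried out there. Hence $G$ qualifies as a subdomain of $\dD_2$ in the sense of Lemma~\ref{lem:nondeg-connected}.

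Then I would simply invoke Lemma~\ref{lem:nondeg-connected} with $n=2$ and this choice of $G$: it says precisely that $(G \times \PP_1) \setminus \mathcal A_d$ is a domain, which is the definition of $\Omega(\DD^2)$.

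There is no real obstacle here; the only thing that deserves a moment's attention is the compatibility between the two previous lemmas (that $G$ is genuinely a subdomain, not just a connected subset), and the fact that removing $\mathcal A_d$ from $G \times \PP_1$ preserves connectedness uses nothing beyond what is already in the proof of Lemma~\ref{lem:nondeg-connected} (where the codimension reasoning ensured that one could join arbitrary points through paths avoiding the degeneracy locus). So the proof reduces to a one or two line citation.
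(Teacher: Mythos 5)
Your proposal is correct and is exactly the paper's argument: the paper's proof of this lemma is the one-line observation that it follows immediately from Lemmas~\ref{lem:nondeg-connected} and \ref{cl:D-Udomain}. Your additional remarks (that $U_1$ is closed in $\dD_2$, so $G$ is open, and that Lemma~\ref{cl:D-Udomain} supplies connectedness) just make explicit the compatibility check the paper leaves implicit.
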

\begin{proof}
The result is an immediate consequence of Lemmas~\ref{lem:nondeg-connected} and \ref{cl:D-Udomain}.
\end{proof}

\subsection{Tridisc}

The idea here is to distinguish one and two dimensional data lying on $3$-complex geodesic.

\begin{enumerate}[i)]

\item Let $V_1'=\{(z,w)\in \DD^3\times \DD^3:\ |z_j|<|z_1|,\ |w_j|<|w_1|,\ \rho(\frac{z_j}{z_1},\frac{w_j}{w_1})<  \rho(z_1, w_1),\ j=2,3\}$.
Put 
$$U_1=\bigcup_{\sigma\in \Sigma_3} \sigma(V_1' \times \PP_1),
$$
where $\Sigma_3$ comprises any three $3$-fold permutations switching $j$-th element and $3$, $j=1,2,3$.

\item For $\phi\in \mathcal O(\DD, \DD)$, $\alpha\in \DD^2$, $\omega\in \TT$, and $t\in [0,1]$ set $$\psi_{\alpha,\omega, \phi}(\lambda) = (\lambda m_{\alpha_1}(\lambda), \omega \lambda m_{\alpha_2}(\lambda), \lambda \phi(\lambda)),\quad \la\in \DD,$$ 
and $\gamma_{t,\alpha} = t\alpha_1 + (1-t)\alpha_2$. Let us define
\begin{multline}\nonumber U_2':=\{(\psi_{\alpha, \omega, \phi}(x), \psi_{\alpha, \omega, \phi}(y), [x m_{\gamma_{t,\alpha}}(x): y m_{\gamma_{t,\alpha}}(y)]):\ \alpha_1\neq \alpha_2,\\ x,y\in \DD\setminus \{0\}, x\neq y,\ t\in (0,1),\ \phi\in\OO(\DD, \DD)\setminus \Aut(\DD)\}.
\end{multline} 
and
$$
U_2=\bigcup_{\sigma\in \Sigma_3} \sigma(U_2').$$

\end{enumerate}

Finally, put 
\begin{equation}\label{Omega(D_3)} 
\Omega(\DD^3):=(\dD_3 \times  \PP_1) \setminus (\overline{U_1}\cup \overline{U_2}\cup \mathcal{A}_d).
\end{equation}

\section{Main construction}\label{sec:main}

The main idea of the paper is hidden in the present section. Recall that $\Delta=\Delta_n$ denote the closed simplex. Let $\Delta^\circ = \Delta_n^\circ$ and $b\Delta=b\Delta_n$.

For $n\in \NN$ consider the mapping $$\Phi:\DD\times \DD \times \DD^n \times \Delta \times \TT^{n-1}\to \DD^n\times \DD^n \times \mathbb P_1$$ given by the formula
$$
\Phi(x,y,\alpha,t,\omega)=(\psi_{\omega,\alpha}(x), \psi_{\omega, \alpha}(y), [x m_{t\cdot \alpha}(x): y m_{t\cdot \alpha}(y)]),$$
where 
$$
\psi_{\omega,\alpha}(\lambda) = (\lambda m_{\alpha_1}(\lambda), \omega_1 \lambda m_{\alpha_2}(\lambda),\ldots, \omega_{n-1} \lambda m_{\alpha_n}(\lambda)),\quad \lambda\in \DD.
$$

Let $X(\DD^n):=\Phi^{-1}(\Omega(\DD^n))$. For the simplicity we shall sometimes write $X=X(\DD^n)$ and $\Omega = \Omega(\DD^n)$. 

Observe that $X(\DD^2)$ consists of points $(x,y, \alpha, t, \omega)\in \DD\times \DD \times \DD^2 \times [0,1]\times \TT$ such that $x\neq y$, $x\neq 0,$ $y\neq 0$, $t\in (0,1)$ and $\alpha_1 \neq \alpha_2$.

Similarly, $X(\DD^3)$ comprises $(x,y,\alpha, t, \omega)\in \DD \times \DD\times \DD^3 \times \Delta \times \TT$ such that $x\neq y$, $x\neq 0$, $y\neq 0$, $t\in \Delta^0,$ and points $\alpha_1, \alpha_2, \alpha_3$ are not co-linear (note that an extremal Pick problem corresponding to data in $(\dD_3\times \PP_1)\setminus \Omega(\DD^3)$ may be interpolated either by a function depending on one variable or, up to a permutation of variables in $\CC^3$, a function depending on two variables of the form \eqref{eq:magic}; then use Lemma~\ref{lem:convexcombination}).

\begin{lemma}\label{lem:phixopen} Let $n=2$ or $n=3$. Then
$\Phi(X(\DD^n))$ is open in $\Omega(\DD^n)$.
\end{lemma}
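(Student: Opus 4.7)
The plan is to verify the hypotheses of the real-analytic inverse function theorem for $\Phi$ restricted to $X(\DD^n)$. My first step is to record that both $X(\DD^n)$ and $\Omega(\DD^n)$ are open subsets of real analytic manifolds of the same real dimension $4n+2$: the source sits inside $\DD \times \DD \times \DD^n \times \Delta_n^\circ \times \TT^{n-1}$ of real dimension $2+2+2n+(n-1)+(n-1)=4n+2$, and the target sits in $\DD^n \times \DD^n \times \PP_1$ of real dimension $2n+2n+2=4n+2$. Since $\Phi$ is a rational expression in its arguments (composed with the projectivization in the last slot), it is real analytic on $X$. Once I verify that the real Jacobian $J\Phi$ is nonsingular at every point of $X$, the real-analytic inverse function theorem will give a local diffeomorphism at each such point, hence openness of $\Phi(X) \subset \Omega$.

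For the Jacobian check I would fix $p=(x,y,\alpha,t,\omega) \in X$ and suppose $v=(v_x,v_y,v_\alpha,v_t,v_\omega) \in \ker d\Phi(p)$; the task is to deduce $v=0$. I would handle the three blocks of $d\Phi$ in turn. From the vanishing of the differentials of $\psi_{\omega,\alpha}(x)$ and $\psi_{\omega,\alpha}(y)$ I obtain two systems of $n$ complex equations relating $(v_x,v_\alpha,v_\omega)$ and $(v_y,v_\alpha,v_\omega)$, respectively. Using that $x\neq y$ with both nonzero, and that the $\alpha_j$'s are pairwise distinct (for $n=2$) or non-collinear (for $n=3$), I combine these systems to isolate first $v_\alpha=v_\omega=0$ and then $v_x=v_y=0$. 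The combinatorial engine here is a linearized shadow of Lemma~\ref{claim:omegaunique}: the pair $(z,w)$ together with its $3$-geodesic structure pins down $(\alpha,\omega)$ up to the discrete ambiguity already identified in Section~\ref{leftinverses}, and the infinitesimal version of that statement forces the variations $v_\alpha$ and $v_\omega$ to vanish.

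Having established $v_x=v_y=v_\alpha=v_\omega=0$, the remaining content of $d\Phi(p)v=0$ is the vanishing of the differential of $\xi=[xm_{t\cdot\alpha}(x):ym_{t\cdot\alpha}(y)]$ in the direction $v_t$. This boils down to $(\partial_\gamma\xi)(v_t\cdot\alpha)=0$, where the tangent space to $\Delta_n^\circ$ at $t$ is identified with $\{\dot t\in\RR^n:\sum_j \dot t_j=0\}$. Because $\alpha_1,\ldots,\alpha_n$ are distinct (for $n=2$) or non-collinear (for $n=3$), the linear map $\dot t \mapsto \dot t\cdot\alpha$ is injective on this tangent hyperplane; and because $x\neq y$ with both nonzero and $\gamma\in\DD$, the $\gamma$-derivative $\partial_\gamma[xm_\gamma(x):ym_\gamma(y)]$ is a $\CC$-linear isomorphism onto $T_\xi\PP_1$, so $v_t=0$.

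The main obstacle is making the block analysis in the second paragraph rigorous: one needs a careful infinitesimal version of the uniqueness for $3$-complex-geodesic reconstruction. I would attack it by first treating $n=2$ explicitly, where $\Phi$ unpacks into a readily computable $10\times 10$ real Jacobian with a block-triangular structure once variables are ordered $(v_\omega,v_\alpha,v_x,v_y,v_t)$; and then bootstrapping to $n=3$ by choosing any two of the three coordinate planes for which the bidisc analysis applies, leaving only one additional $(\omega_2,\alpha_3)$-variation that affects the third slot and is handled by one further linear step.
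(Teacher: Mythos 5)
Your route differs from the paper's: you aim to prove that $d\Phi$ is nonsingular at every point of $X(\DD^n)$ and invoke the real-analytic inverse function theorem, whereas the paper proves that $\Phi$ is locally injective as a map of sets (in fact globally $2$-to-$1$, with $\Phi(x,y,\alpha,t,\omega)=\Phi(-x,-y,-\alpha,t,\omega)$) and then applies Brouwer's invariance of domain, which needs only continuity and local injectivity between manifolds of equal dimension. Your framework is legitimate in principle, your dimension count is right, and the $v_t$-block is handled essentially correctly. But the heart of the lemma is precisely the step you defer: showing $(v_x,v_y,v_\alpha,v_\omega)=0$. That step is not carried out, and the mechanism you propose for it cannot deliver it. Lemma~\ref{claim:omegaunique} --- and hence any ``linearized shadow'' of it --- determines only $\omega$ and the single quantity $\frac{\alpha_1-\alpha_2}{\bar\alpha_1-\bar\alpha_2}$; it does not determine $\alpha$ itself. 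In the paper, after that lemma is applied one still needs the substantial computation of composing with the magic functions $\Phi_{s,\eta}$, reading the resulting identities as rational functions of $s$, and comparing coefficients to pin down $(x,y,\alpha)$ up to the global sign. Your proposal contains no infinitesimal counterpart of that computation; asserting that the lemma ``forces $v_\alpha$ and $v_\omega$ to vanish'' overstates what it gives (at best $v_\omega=0$ plus one real-linear constraint on $(v_{\alpha_1},v_{\alpha_2})$). You flag this yourself as ``the main obstacle,'' and that self-assessment is accurate: without it there is no proof.

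Two further cautions if you pursue this route. First, nonsingularity of $d\Phi$ is strictly stronger than set-theoretic local injectivity, so you cannot import the paper's injectivity argument to conclude it; and since $m_\alpha(\lambda)$ depends on $\bar\alpha$, the map $\Phi$ is only real-analytic, so the Jacobian is a genuinely real $(4n+2)\times(4n+2)$ matrix with no holomorphic shortcuts. For instance, the $\gamma$-derivative of $\gamma\mapsto[x m_\gamma(x):y m_\gamma(y)]$ is not $\CC$-linear as you claim but of the form $a\,d\gamma+b\,d\bar\gamma$; its injectivity holds because $|a|/|b|=|m_\gamma(x)m_\gamma(y)|^{-1}>1$, a verification absent from your sketch, and the alleged block-triangular structure of the $10\times10$ Jacobian for $n=2$ is asserted rather than exhibited. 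Second, the paper's argument yields the explicit $2$-to-$1$ fiber structure of $\Phi$, which is reused later (Remarks~\ref{rem:b_1} and~\ref{rem:connected} and the tridisc proof); a pure Jacobian argument, even if completed, would not supply that by itself.
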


\begin{proof}

We shall prove the theorem simultaneously for $n=2$ and $n=3$. The crucial observation is that $\dim X= \dim \Omega= 4n+2$. This will allow us to make use of the invariance of domain theorem.

\begin{claim}\label{claim}
$\Phi$ is locally injective on $X$. Moreover, it is $2-1$ and even with respect to $x,y, \alpha$ i.e. $\Phi(x,y, \alpha, t, \omega) = \Phi (-x, -y, -\alpha, t, \omega)$  for $(x,y,\alpha, t, \omega)\in X(\DD^n)$.
\end{claim}

\begin{proof}[Proof of claim]
Let $(x,y,\alpha, t, \omega), (x', y', \alpha', t', \omega')\in X$ be such that
\begin{equation}
\label{row}
\Phi(x,y,\alpha, t, \omega) = \Phi(x', y', \alpha', t', \omega'). \end{equation}
As already mentioned, the definition of $X$ guarantees that, in particular, $t,t'\in \Delta^\circ$, $xy\neq 0$, $x\neq y$, and that $\alpha_1\neq \alpha_2$. 
Let $F_{\alpha,\omega,t}:\DD^n\to\DD$ be a left inverse to $\psi_{\omega, \alpha}$, constructed in Section~\ref{leftinverses}.
Since both problems
$$
\begin{cases}
0\mapsto 0,\\ \psi_{\omega, \alpha}(x) \mapsto x m_{t\cdot \alpha} (x)\\
\psi_{\omega, \alpha}(y) \mapsto y m_{t\cdot \alpha} (y)
\end{cases}\quad \text{and}\quad
\begin{cases}
0\mapsto 0,\\ \psi_{\omega', \alpha'}(x') \mapsto x' m_{t'\cdot  \alpha'} (x')\\
\psi_{\omega', \alpha'}(y') \mapsto y' m_{t'\cdot \alpha'} (y')
\end{cases}
$$
are extremal (see Lemma~\ref{lem:extr}), we find from \eqref{row} that $x m_{t\cdot \alpha} (x)=\tau x' m_{t'\cdot \alpha'} (x')$ and $y m_{t\cdot \alpha} (y)=\tau y' m_{t'\cdot \alpha'} (y')$ for some unimodular $\tau$. From this equalities we deduce that $\tau=1$.
Moreover, they imply that $F_{\alpha, \omega, t}\circ \psi_{\omega', \alpha'}$ fixes the origin and sends $x'$ to $ x' m_{t'\cdot \alpha'}(x')$ and $y'$ to $ y' m_{t'\cdot \alpha'}(y')$. Consequently, $F_{\alpha, \omega, t}\circ \psi_{\omega', \alpha'}$ is a Blaschke product of degree $2$. Thus, according to Lemma~\ref{claim:omegaunique}, $\omega = \omega'$ and $\frac{\alpha_1 - \alpha_2}{\bar \alpha_1 - \bar\alpha_2}= \frac{\alpha'_1 - \alpha'_2}{\bar\alpha'_1 - \bar\alpha'_2}.$ 

Composing the equality $(x m_{\alpha_1}(x), x m_{\alpha_2}(x))= (x' m_{\alpha'_1}(x'), x' m_{\alpha'_2}(x'))$ with the function $\Phi_{s,\eta}$ of the form \eqref{eq:magic}, where $s\in [0,1]$ and $\eta=\frac{\alpha_1 - \alpha_2} {\bar\alpha_1 - \bar \alpha_2}$, we get 
\begin{equation}\label{eq:compPhi1}
x m_{s\alpha_1 +(1-s)\alpha_2} (x) = x' m_{s\alpha'_1 +(1-s)\alpha'_2} (x').
\end{equation} 
Analogously, the relation 
\begin{equation}\label{eq:compPhi2}
y m_{s\alpha_1 +(1-s)\alpha_2} (y) = y' m_{s\alpha'_1 +(1-s)\alpha'_2} (y')
\end{equation}
holds for any $s\in [0,1]$. Looking at equations \eqref{eq:compPhi1} and \eqref{eq:compPhi2} as at rational functions depending on $s$ and comparing their coefficients we find that
$$\frac{\alpha_2 - x}{\alpha_1 - \alpha_2}=\frac{\alpha'_2 - x'}{\alpha'_1 - \alpha'_2},\quad
\frac{\alpha_2 - y}{\alpha_1 - \alpha_2}=\frac{\alpha'_2 - y'}{\alpha'_1 - \alpha'_2}
$$
and
$$\frac{1 - x\bar\alpha_2}{x(\alpha_1 - \alpha_2)}= \frac{1 - x'\bar\alpha'_2}{x'(\alpha'_1 - \alpha'_2)},\quad
\frac{1 - y\bar \alpha_2}{y(\alpha_1 - \alpha_2)}= \frac{1 - y'\bar \alpha'_2}{y'(\alpha'_1 - \alpha'_2)}.$$ 
Moreover, $\frac{\alpha_1 - \alpha_2}{\bar\alpha_1 - \
\bar \alpha_2}= \frac{\alpha'_1 - \alpha'_2}{\bar\alpha'_1 - \bar\alpha'_2}$ and thus
$$\frac{\frac{1}{\bar x} - \alpha_2}{\alpha_1 - \alpha_2} = \frac{\frac{1}{\bar x'} - \alpha'_2}{\alpha'_1 - \alpha'_2},
\quad
\frac{\frac{1}{\bar y} - \alpha_2}{\alpha_1 - \alpha_2} = \frac{\frac{1}{\bar y'} - \alpha'_2}{\alpha'_1 - \alpha'_2}.$$
Subtracting and dividing proper equalities listed above we get
$$\frac{x-y}{\frac 1x - \frac 1y} = \frac{x' - y'}{\frac{1}{x'} - \frac{1}{y'}}, \quad \frac{x-\frac{1}{\bar y}}{y -\frac{1}{\bar x}} = \frac{x'-\frac{1}{\bar y'}}{y' -\frac{1}{\bar x'}}\text{ and } \frac{x-\frac{1}{\bar x}}{y -\frac{1}{\bar y}} = \frac{x'-\frac{1}{\bar x'}}{y' -\frac{1}{\bar y'}}.$$
The first relation immediately implies that $xy=x'y'$. From the second one we deduce that $|x' y|=|x y'|$, and consequently $|x'|=|x|$ and $|y'|=|y|$. Putting it to the last equality we find that $xy' = x'y$. These relations imply that $x = x'$, $y= y'$ or $x=-x'$, $y=-y'$. Making use of the equalities $x m_{\alpha_j}(x) = x' m_{\alpha_j'}(x')$, $y m_{\alpha_j}(y) = y' m_{\alpha_j'}(y')$, $j=1,\ldots, n$, we infer that either $(x,y,\alpha)=(x',y',\alpha')$ or $(x,y,\alpha) = (-x', -y', -\alpha')$.

Both these cases imply that $m_{t\cdot \alpha}(x) = m_{t'\cdot  \alpha}(x)$, and $m_{t\cdot \alpha}(y) = m_{t'\cdot  \alpha}(y)$. Therefore $t=t'$.
\end{proof}

Thus the claim follows and, according to the invariance of domain theorem, we have shown that $\Phi(X)$ is open in $\Omega$.
\end{proof}

\begin{lemma}\label{lem:phixclosed}
$\Phi(X)$ is closed in $\Omega$.
\end{lemma}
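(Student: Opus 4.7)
My plan is to prove closedness by lifting the convergence to the parameter space and excluding every way the lift can escape $X$. Fix a sequence $\Phi(p^{(k)})\to (z,w,\xi)\in \Omega$ with $p^{(k)}=(x^{(k)},y^{(k)},\alpha^{(k)},t^{(k)},\omega^{(k)})\in X$. Since the enlarged parameter space $\overline{\DD}\times\overline{\DD}\times\overline{\DD}^n\times \Delta\times \TT^{n-1}$ is compact, after passing to a subsequence I may assume $p^{(k)}\to p^\infty=(x,y,\alpha,t,\omega)$ in that closure. The formula defining $\Phi$ extends continuously to the closure, so automatically $\Phi(p^\infty)=(z,w,\xi)$; it remains only to check that $p^\infty\in X$, equivalently that none of the boundary or degeneracy conditions is attained in the limit. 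Each such condition must then be shown to force $(z,w,\xi)$ into the complement of $\Omega$, i.e.\ into $\mathcal A_d$, or into $U_1$, or (only for $n=3$) into $\overline{U_2}$.

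\textbf{Universal degenerations.} I dispose first of the cases that do not use the geometric structure of $\alpha$. If $|x|=1$, then $|xm_{\alpha_1}(x)|=1$ (as $m_{\alpha_1}$ sends $\TT$ into $\TT$), so $z\notin \DD^n$; analogously for $|y|=1$. If $x=0$, $y=0$ or $x=y$, one reads off $z=0$, $w=0$ or $z=w$, all forbidden by $(z,w)\in\dD_n$. If $t\in b\Delta$, some barycentric coordinate of $t$ vanishes, so $t\cdot\alpha$ coincides with some $\alpha_j$ and the last coordinate of $\Phi$ collapses to $\xi=[z_j:w_j]$, placing $(z,w,\xi)\in \mathcal A_0'\subset\mathcal A_d$. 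Finally, if some $|\alpha_j|=1$, then $m_{\alpha_j}$ degenerates to the unimodular constant $\alpha_j$, making $\psi_{\omega,\alpha}$ linear in its $j$-th slot; using the Schwarz--Pick invariance $\rho(m_\beta(u),m_\beta(v))=\rho(u,v)$ of the surviving Möbius factor, one computes that the limit data satisfies, after a permutation of variables, $|z_j|<|z_i|$, $|w_j|<|w_i|$ and $\rho(z_j/z_i,w_j/w_i)=\rho(z_i,w_i)$, which is exactly the defining condition of $U_1$, contradicting $(z,w)\in G$.

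\textbf{Coincidence and co-linearity of the $\alpha_j$'s.} The remaining degenerations concern the mutual position of the $\alpha_j$'s. For $n=2$, if $\alpha_1=\alpha_2$ in the limit, then $\psi_2=\omega\psi_1$, giving $z_2=\omega z_1$ and $w_2=\omega w_1$, so $(z,w)\in \mathcal I\subset U_1$. For $n=3$ any pairwise coincidence reduces by the same calculation (and a coordinate permutation) to the previous case. The delicate remaining case is that $\alpha_1,\alpha_2,\alpha_3$ are pairwise distinct but co-linear, say $\alpha_3=s\alpha_1+(1-s)\alpha_2$ with $s\in \RR$; after a permutation of the $\alpha_j$'s (absorbed into the permutation appearing in the definition of $U_2$) I may assume $s\in(0,1)$. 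By the second part of Lemma~\ref{lem:propmagic}, with $\eta=(\alpha_1-\alpha_2)/(\overline{\alpha_1}-\overline{\alpha_2})$, one has $\psi_3(\lambda)=\omega_2\Phi_{s,\eta}(\psi_1(\lambda),\bar\omega_1\psi_2(\lambda))$, so $(z,w)$ lies on the $2$-dimensional variety $\{z_3=\omega_2\Phi_{s,\eta}(z_1,\bar\omega_1 z_2)\}$. Writing the third component as $\lambda\phi^\infty(\lambda)$ with $\phi^\infty=\omega_2 m_{\alpha_3}\in\Aut(\DD)$ and perturbing $\phi^\infty$ to $\phi_\epsilon=(1-\epsilon)\phi^\infty\in \OO(\DD,\DD)\setminus\Aut(\DD)$ produces a family of data points of the form prescribed in $U_2'$ (using $t'=t_1+st_3\in(0,1)$, which satisfies $t'\alpha_1+(1-t')\alpha_2=t\cdot\alpha$, hence leaves $\xi$ unchanged) converging to $(z,w,\xi)$; thus $(z,w,\xi)\in\overline{U_2}$, contradicting $(z,w,\xi)\in \Omega$. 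This co-linear case is the main obstacle, precisely because the natural limit of $\phi$ is an automorphism while the definition of $U_2$ demands a non-automorphic $\phi$, so membership in $\overline{U_2}$ has to be recovered through an approximation argument rather than read off directly. Once all possibilities are excluded, $p^\infty\in X$, and by continuity $\Phi(p^\infty)=(z,w,\xi)\in \Phi(X)$.
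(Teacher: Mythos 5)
Your overall strategy --- lift the sequence to the parameter space, pass to a convergent subsequence in the compact closure, and rule out every boundary degeneration --- is the same as the paper's, and some cases (the co-linearity of the $\alpha_j$'s, which you handle by perturbing the automorphism $\phi^\infty$ to $(1-\epsilon)\phi^\infty$ so as to land in $\overline{U_2}$, and the case $|\alpha_j|=1$ with $x,y\in\DD$) are done correctly, in fact more explicitly than in the paper. But there is a genuine gap at the foundation. The map $\Phi$ does \emph{not} extend continuously to the closure of the parameter space: $m_\alpha(x)=\frac{\alpha-x}{1-\bar\alpha x}$ is indeterminate where $\alpha=x\in\TT$. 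Consequently your dismissal of the case $|x|=1$ (``$|x\,m_{\alpha_1}(x)|=1$, so $z\notin\DD^n$'') fails precisely when $\alpha_1^{(k)}$ and $x^{(k)}$ converge to the same boundary point: along such a sequence $x^{(k)}m_{\alpha_1^{(k)}}(x^{(k)})$ can converge to any point of $\overline{\DD}$ (take $x_k=s_k\to 1^-$ and $\alpha_k=2s_k-1$; then $m_{\alpha_k}(x_k)\to -1/3$). This is exactly the subcase on which the paper spends most of its proof: from $z'\in\DD^n$ one first deduces that $\alpha_j'=x'$ for every $j$ (and $=y'$ as well if both limits are unimodular), and then the invariance $\rho(m_\beta(u),m_\beta(v))=\rho(u,v)$ yields $\rho(z_j',w_j')=\rho(\sigma',\tau')$ for all $j$ (respectively $|w_j'|=|y'|=|\tau'|$ in the mixed case), so a two-point subproblem of the limit problem is extremal and $(z',w',\xi')\in\mathcal A_d$, contradicting $(z',w',\xi')\in\Omega$. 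Your argument is missing this entirely, and it cannot be repaired by appealing to continuity of the formula.

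A second, smaller defect: for $n=3$, if $t\in b\Delta$ has exactly one vanishing coordinate, then $t\cdot\alpha$ is a proper convex combination of two of the $\alpha_j$'s and in general coincides with no $\alpha_j$, so $\xi\neq[z_j:w_j]$ and the point does not land in $\mathcal A_0'$. (Note also that the paper only asserts $\mathcal A_d\subset\mathcal A_0'\cup\mathcal A_d'\cup\mathcal A_d''$, not the reverse inclusion your last step uses.) The correct conclusion here is that the limit data is $2$-dimensional and lies in $\overline{U_2}$, which follows from the same perturbation device you already employ in the co-linear case.
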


\begin{proof}Take a sequence $(z_\nu,w_\nu ,\xi_\nu)$ in $\Phi(X)$ converging to $(z',w',\xi')$ in $\Omega$. Let $(x_\nu, y_\nu, \alpha_\nu, t_\nu,\omega_\nu)\in X$ be such that $\Phi(x_\nu, y_\nu, \alpha_\nu, t_\nu,\omega_\nu) =(z_\nu,w_\nu,\xi_\nu).$ Put $\sigma_\nu:=x_\nu m_{t_\nu \cdot \alpha_\nu}(x_\nu)$ and $\tau_\nu:=y_\nu m_{t_\nu \cdot \alpha_\nu}(y_\nu).$ Clearly, $t_{z_\nu, w_\nu, (\sigma_\nu, \tau_\nu)}=1$. Passing to a subsequence we may assume that $(x_\nu, y_\nu, \alpha_\nu, t_\nu,\omega_\nu)$ is convergent in $\overline X$. Denote its limit by $(x',y',\alpha', t',\omega')$. We may also assume that $\sigma_\nu \to \sigma'$ and $\tau_\nu \to \tau'$. By the continuity of $t$ we find that $t_{z',w',(\sigma', \tau')}=1$ and thus the $3$-point Pick problem $\DD^n\to \DD$: 
\begin{equation}\label{eq:sigma'pick} 
\begin{cases}0\mapsto 0\\ z'\mapsto \sigma'\\ w'\mapsto \tau'
\end{cases}
\end{equation}
is extremal. 

Our aim is to show that $(x',y',\alpha', t',\omega')$ lies in $X$. Assume the contrary.

The simplest case is when both $x'$ and $y'$ are in $\DD$. Then trivially, $x'\neq y'$. If $t'\in b\Delta$, then the extremal problem corresponding to $(z', w', \xi')$ is $n-1$-dimensional. If $\alpha_j'\in \TT$ for some $j$, then there is a $2$-extremal passing through the nodes, whence \eqref{eq:sigma'pick} is 1-dimensional. Moreover if $\alpha_1=\alpha_2$, when $n=2$ (respectively, if $\alpha'_1$, $\alpha'_2$, $\alpha'_3$ are co-linear, when $n=3$), the problem is $n-1$-dimensional.

Assume that $x',y'\in\TT$. Note that then $x'=y'=\alpha_j'$ for $j=1, \ldots, n$. Moreover, $(\omega_{j-1} m_{(\alpha_j)_\nu}(x_\nu), \omega_{j-1} m_{(\alpha_j)_\nu}(y_\nu))$ is convergent to $(z_j', w_j'),$ $j=1,\ldots, n$, (here we put $\omega_0=1$) and $(m_{t_\nu\cdot \alpha_\nu} (x_\nu), m_{t_\nu\cdot \alpha_\nu} (y_\nu))$ converges to  $(\sigma', \tau')$. Therefore $\rho(x',y')= \rho(\sigma', \tau') =\rho(z_j', w_j')$, $j=1,\ldots, n$. Hence, problem~\eqref{eq:sigma'pick} is degenerate and $(z',w',\xi')$ does not lie in $\Omega$; a contradiction.

Now consider the case when, up to a permutation of $x'$ and $y'$, $x'\in \TT$ and $y'\in \DD$. Then $\alpha_j'\in\TT$ for all $j=1,\ldots, n.$ Using these relations we infer that $|w_j'|=|y'|$, $j=1,\ldots, n$, and $|\tau'|=|y'|$. Therefore, problem~\eqref{eq:sigma'pick} is degenerate, as well. This again gives a contradiction.
\end{proof}

\section{Proof for the bidisc}

In this section we shall present:
\begin{proof}[Proof of Lemma~\ref{main:bidisc}]
It follows from Lemma~\ref{lem:Omega(D2)domain} that $\Omega(\DD^2)$ is connected. Moreover, $\Phi(X(\DD^2))$ is an open-closed subset of $\Omega(\DD^2)$, according to Lemmas~\ref{lem:phixopen} and \ref{lem:phixclosed}. Therefore we get that 
$$\Phi(X(\DD^2))=\Omega(\DD^2).$$
From this equality and Remark~\ref{rem:degreeb} one can deduce the result.
\end{proof}

\section{Proof for the tridisc}\label{sec:tri}
The aim of this section if to prove Lemma~\ref{main:tridisc}. Our approach is quite technical and the proof is preceded by few preparatory results. Several times we shall make use of already proven Lemma~\ref{main:bidisc}.

Let us denote $$\mathbb X:=(\dD_3 \times \PP_1)\setminus \mathcal A_d.$$
Recall that $\Omega(\DD^3)= \XX\setminus \overline{U_1 \cup U_2}$.

For a subdomain $W$ of $\dD_3\times \PP_1$ by a topological boundary of $W$ with respect to $\XX$ we understand $\XX\cap \partial W$.

\begin{lemma}\label{lem:prep1} There is a closed set $\mathcal U$ such that the topological boundary of $U_1$ with respect to $\XX$ is contained in the union $\mathcal U\cup (\overline{U_1\cup U_2}^\circ)$ and the Hausdorff codimension of $\mathcal U$ is equal to $2$.
\end{lemma}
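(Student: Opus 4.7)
The plan is to stratify the topological boundary $\partial U_1 \cap \XX$ according to the number of defining inequalities of $V_1'$ that saturate, show that each codimension-one stratum is generically absorbed into $\overline{U_1 \cup U_2}^\circ$, and gather all codimension-$\geq 2$ pieces into $\mathcal U$. Because $U_1 = \bigcup_{\sigma \in \Sigma_3} \sigma(V_1' \times \PP_1)$ and the entire statement is $\Sigma_3$-equivariant, it is enough to analyze one slice $V_1' \times \PP_1$ and then take the $\Sigma_3$-orbit at the end. Since $V_1'$ is cut out of $\dD_3$ by the six strict inequalities $|z_j|<|z_1|$, $|w_j|<|w_1|$, $\rho(z_j/z_1,w_j/w_1)<\rho(z_1,w_1)$ for $j=2,3$, its boundary in $\dD_3$ lies in the union of six real-analytic hypersurfaces $H_1,\ldots,H_6$, one per saturated inequality, and I would treat the two ``M\"obius'' equalities (the $\rho$-ones) and the four ``modulus'' equalities separately.

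On a M\"obius stratum $H$, say $\rho(z_2/z_1,w_2/w_1)=\rho(z_1,w_1)$, the two-point Pick problem $z_1\mapsto z_2/z_1$, $w_1\mapsto w_2/w_1$ is extremal, so its unique interpolant $\phi_2$ is an automorphism of $\DD$, and hence the analytic disc $\lambda\mapsto(\lambda,\lambda\phi_2(\lambda),\lambda\phi_3(\lambda))$ through the nodes has its second slot a degree-two Blaschke product. After the transposition swapping the first and second coordinates, the disc fits the parametrization $\psi_{\alpha,\omega,\varphi}$ of $U_2'$, with $\varphi$ generically not a M\"obius map (since $\phi_3$ is generic). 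A local rank computation for the parametrization map $\Phi$ of Section~\ref{sec:main} at the corresponding parameter then shows that the image of $\Phi$ fills an open neighborhood of $(z,w,\xi)$ in $\XX$, placing the point inside $\overline{U_2}^\circ \subseteq \overline{U_1\cup U_2}^\circ$. The exceptional locus where the rank drops, or where $\varphi$ is forced to be an automorphism, is a proper real-analytic subset of $H$ of codimension $\geq 2$ in $\dD_3\times\PP_1$.

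For a modulus stratum, say $|z_2|=|z_1|$, the condition $(z,w,\xi)\notin\mathcal A_d$ combined with the two-point extremality test cuts out a real-analytic proper subset of $\PP_1$ on which the data remains non-degenerate. On that subset, the bidisc Lemma~\ref{main:bidisc} applied to the projection onto the first two coordinates yields a one-parameter family of three-complex geodesics of the form $\psi_{\alpha,\omega,\varphi}$ through the nodes, obtained by sliding the third slot, and a rank argument for $\Phi$ analogous to the one above gives $(z,w,\xi)\in\overline{U_2}^\circ$. The residual points where this deformation degenerates lie on an intersection $H\cap H_i$ with $i\ne 2$, a set of real codimension $\geq 2$. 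The three remaining modulus strata are handled by the symmetries $z\leftrightarrow w$ and $2\leftrightarrow 3$.

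Finally, I would set $\mathcal U$ to be the union, over $\sigma\in\Sigma_3$ and pairs $i\ne k$, of $\sigma((H_i\cap H_k)\times\PP_1)\cap\XX$, together with the codimension-$\geq 2$ exceptional sets extracted in the two stratum analyses; each $H_i\cap H_k$ is real-analytic of codimension $\geq 2$ in $\dD_3$, so $\mathcal U$ is closed in $\XX$ and has Hausdorff codimension $2$. The principal obstacle is the generic rank verification of $\Phi$ along the M\"obius stratum: after the required $\Sigma_3$-transposition one must check that the differential of $\Phi$ at the chosen parameter is surjective onto the tangent space of $\XX$ (both real dimensions equal $4n+2$), and simultaneously confirm that the third-slot factor $\varphi$ produced by the construction does not accidentally lie in $\Aut(\DD)$ on a positive-measure subset of the stratum, since otherwise the corresponding points cannot legitimately be declared interior to $\overline{U_2}$.
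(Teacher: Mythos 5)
Your overall strategy (stratify $\partial V_1'$ by which defining inequalities saturate, absorb the codimension-one strata into $\overline{U_1\cup U_2}^{\circ}$, dump the rest into $\mathcal U$) matches the paper's, but two steps do not hold up. First, the modulus strata. You treat $\{|z_j|=|z_1|\}$ as a genuine codimension-one piece of $\partial V_1'$ that must be absorbed into $\overline{U_2}^{\circ}$ by a separate argument, and that argument is only gestured at; it is also doubtful, since at such a point the quotient $z_j/z_1$ is unimodular and the projected two-point data is (near-)degenerate, so Lemma~\ref{main:bidisc} does not apply to produce the claimed family of geodesics. The paper's key observation, which you miss, is that these strata simply do not occur in codimension one: on $\overline{V_1'}$ one has $\rho(z_j/z_1,w_j/w_1)\le\rho(z_1,w_1)<\infty$, so $|z_j|=|z_1|$ forces $z_j=\omega z_1$ \emph{and} $w_j=\omega w_1$ for a common $\omega\in\TT$, i.e.\ the point lies in the analytic set $\{z_jw_1=z_1w_j\}$ of real codimension $2$, which can be put into $\mathcal U$ outright. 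Without this, your $\mathcal U$ (built only from pairwise intersections $H_i\cap H_k$ and unspecified exceptional sets) does not cover the modulus part of the boundary, and the lemma's codimension claim is not established.

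Second, on the M\"obius stratum your concluding device is a rank computation for the map $\Phi$ of Section~\ref{sec:main}. That map parametrizes $\Omega(\DD^n)=\XX\setminus\overline{U_1\cup U_2}$ (this is exactly what Lemmas~\ref{lem:phixopen} and \ref{lem:Omega=Phi} are about), so showing that its image fills a neighborhood of $(z,w,\xi)$ would place the point in the closure of the \emph{complement} of $U_1\cup U_2$ --- the opposite of what you need. The correct mechanism, used in the paper, is perturbative: for every $(z',w',\xi')$ near $(z,w,\xi)$ the strict inequality $\rho(z_3'/z_1',w_3'/w_1')<\rho(z_1',w_1')$ persists, so $z_3'=z_1'\psi(z_1')$, $w_3'=w_1'\psi(w_1')$ for some $\psi\in\OO(\DD,\DD)$, while the projected problem on the first two coordinates remains non-degenerate and, by Lemma~\ref{main:bidisc}, sits on a $3$-complex geodesic $(\la m_{\alpha_1}(\la),\eta\la m_{\alpha_2}(\la))$; splicing in the third coordinate yields a geodesic of the form $\psi_{\alpha,\omega,\tilde\phi}$ through the nodes of the perturbed problem, whence \emph{all} nearby points lie in $\overline{U_1\cup U_2}$ and the original point is interior to the closure. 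You would also need to add the remaining analytic degeneracies ($z_i=w_i$, $z_i=0$, $w_i=0$, $z_iw_j=z_jw_i$) to $\mathcal U$ explicitly rather than leaving them inside an unspecified exceptional set.
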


\begin{proof}[Proof]
Let $\mathcal I$ denote the following analytic set:
$$\mathcal I=\bigcup_{i\neq j}\{(z,w)\in \DD^3\times \DD^3:\ z_i=w_i\ \vee\ z_i=0\ \vee\ w_i=0,\ \vee \ z_i w_j = z_j w_i\}.$$ Take a point $(z,w)$ in the topological boundary of $V_1'$ with respect to $\DD^3\times \DD^3$ such that $(z,w)\not\in \mathcal I$. Thus, $|z_j|<|z_1|$, $|w_j|<|w_1|$ for $j=2,3,$ and the equality
\begin{equation}\label{eq:U1}
\rho\left(\frac{z_j}{z_1},\frac{w_j}{w_1} \right) = \rho(z_1,w_1)
\end{equation}
holds for some $j=2,3$ (for $(z,w)\in \partial V'_1\cap (\DD^3\times \DD^3)$ one of the equalities $|z_2|=|z_1|$, $|z_3|=|z_1|$ and the relation $(z,w)\in \mathcal I$ cannot hold simultaneously).

First, consider the situation when equality \eqref{eq:U1} is satisfied for two indexes, both $j=2$ and $j=3$. Let us denote 
\begin{multline}\nonumber \mathcal J:=\{(z,w)\in \DD^3\times \DD^3:\ |z_j|<|z_1|,\ |w_j|<|w_1|,\\ \rho\left(\frac{z_j}{z_1},\frac{w_j}{w_1} \right) = \rho(z_1,w_1),\ j=2,3\}.
\end{multline} 
Observe that the Hausdorff dimension of $\mathcal J$ is equal to $2n-2$ and that $\mathcal I \cup \mathcal J$ is closed. Thus $$\mathcal{U}:=\bigcup_{\sigma\in \Sigma_3} \sigma((\mathcal I\cup \mathcal J)\times \PP_1)$$ is a closed set such that whose Hausdorff codimension is equal to $2$.

What remains to do is to show that any point $(z,w,\xi)$, where $(z,w)$ satisfies the equality \eqref{eq:U1} with exactly one $j$, lies in $\overline{U_1\cup U_2}^\circ$.

Losing no generality suppose that 
\begin{equation}\label{eq:U_1} 
\rho\left(\frac{z_2}{z_1}, \frac{w_2}{w_1}\right)= \rho(z_1, w_1) \quad \text{and} \quad \rho\left(\frac{z_3}{z_1}, \frac{w_3}{w_1}\right)< \rho(z_1, w_1).
\end{equation}
The second relation in \eqref{eq:U_1} implies that $z_3 = \phi(z_1)$ and $w_3= \phi(w_1)$ for some holomorphic function $\phi:\DD\to \DD$ fixing the origin. Thus the extremal Pick interpolation problem $\DD^2\to \DD$ corresponding to $((z_1,z_2), (w_1, w_2), \xi)$ is non-degenerate (otherwise the problem corresponding to $(z,w,\xi)$ would be degenerate).
Consequently, if $(z', w', \xi')$ is close to $(z,w,\xi)$, then, by Theorem~\ref{main:bidisc}, the function interpolating the extremal problem $\DD^2\to \DD$ corresponding to $((z_1', z_2'), (w_1', w_2'), \xi')$ is a left inverse to a $3$-complex geodesic passing through the nodes and being of the form stated there. Trivially, if $(z',w',\xi')$ is close enough to $(z,w,\xi)$, then there is a holomorphic function $\psi:\DD\to \DD $ such that $z_3' = z_1'\psi(z_1')$ and $w'_3= w_1'\psi(w_1')$. These properties mentioned above provide us with a 3-complex geodesic passing through the nodes of a three-extremal problem corresponding to $(z',w',\xi')$ and whose left inverse depends only on two first variables. Making use of this fact one can trivially check that $(z',w',\xi')\in \overline{U_1\cup U_2}$. This finishes the proof.
\end{proof}

\begin{lemma}\label{lem:b_1}
The topological boundary of $\Omega=\Omega(\DD^3)$ with respect to $\mathbb X$ may be divided onto two parts $b_1\Omega$ and $b_\infty \Omega$ such that: $b_1\Omega$ is a closed subset of $\mathbb X$ of the Hausdorff codimension $2$, and any point of $b_\infty \Omega$ is a smooth point of $\partial \Omega$.
\end{lemma}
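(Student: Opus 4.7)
The plan is to use the identity $\Omega=\XX\setminus\overline{U_1\cup U_2}$ together with $\overline{U_1\cup U_2}=\overline{U_1}\cup\overline{U_2}$ to reduce the analysis of $\partial\Omega\cap\XX$ to that of $(\partial U_1\cup\partial U_2)\cap\XX$, and then to handle the two pieces separately. The governing intuition is that $U_1$ is thin in $\dD_3\times\PP_1$ so its entire boundary is automatically thin, while $U_2$ is of full dimension, and its boundary inside $\Omega$ consists generically of a real-analytic hypersurface, plus a small exceptional codimension-two part.

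For the piece coming from $\partial U_1$, Lemma~\ref{lem:prep1} already does the work. It shows that $(\partial U_1)\cap\XX\subset\mathcal U\cup\overline{U_1\cup U_2}^{\circ}$, with $\mathcal U$ closed in $\XX$ of Hausdorff codimension two, and any point of $\overline{U_1\cup U_2}^{\circ}$ lies in the interior of the complement of $\Omega$ and so is not on $\partial\Omega$. Hence the $U_1$-contribution to $\partial\Omega\cap\XX$ is contained in $\mathcal U$, which I place in $b_1\Omega$.

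For the piece coming from $\partial U_2$, since $U_2=\bigcup_{\sigma\in\Sigma_3}\sigma(U_2')$, it suffices to treat $U_2'$. The set $U_2'$ is the image of the real-analytic parameterization sending $(x,y,\alpha_1,\alpha_2,\omega,\phi,t)$, with $x,y\in\DD\setminus\{0\}$, $x\neq y$, $\alpha_1\neq\alpha_2$ in $\DD$, $\omega\in\TT$, $t\in(0,1)$, and $\phi\in\OO(\DD,\DD)\setminus\Aut(\DD)$, into $\dD_3\times\PP_1$. Boundary points in $\partial\Omega$ arise when some parameter approaches a degenerate locus. The ``interior'' degenerations $x=y$, $x=0$, $y=0$, $\alpha_1=\alpha_2$, $t\in\{0,1\}$, $|\alpha_j|=1$, $|x|=1$, $|y|=1$ either map out of $\XX$ entirely or impose at least two independent real-analytic conditions on the image, so they contribute a closed subset of $\XX$ of Hausdorff codimension at least two; these are likewise absorbed into $b_1\Omega$.

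The remaining, thick part of $(\partial U_2')\cap\Omega$ is the locus where the free function $\phi$ is forced to be a M\"obius transformation $m_\beta$ of the disc. Equivalently, this is the locus where the two-point Schwarz-Pick problem $x\mapsto z_3/x,\ y\mapsto w_3/y$ for $\phi$ becomes extremal, a single real-analytic equation of the form $\rho(z_3/x,w_3/y)=\rho(x,y)$. Outside the exceptional set of the previous paragraph the differential of this equation is nonzero and $\Omega$ sits locally on one side of the resulting hypersurface, so every such point is a smooth point of $\partial\Omega$; I collect these into $b_\infty\Omega$. The decomposition $\partial\Omega\cap\XX=b_1\Omega\cup b_\infty\Omega$ then follows immediately, with $b_1\Omega$ closed of Hausdorff codimension two. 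The principal obstacle will be the transversality check in this last paragraph: one must verify, using the explicit form of the parameterization together with the $2$-to-$1$ behaviour of $\Phi$ established in Section~\ref{sec:main}, that the extremality equation really cuts out a smooth real hypersurface at generic points and that $\Omega$ sits locally on the correct side of it, which amounts to an implicit-function/Jacobian computation grounded in the bidisc formulas from Lemma~\ref{main:bidisc}.
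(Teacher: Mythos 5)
Your treatment of the $U_1$ piece via Lemma~\ref{lem:prep1} matches the paper, and your identification of the thick part of $\partial U_2'$ with the real-analytic hypersurface $\rho(z_3/x(\eta),w_3/y(\eta))=\rho(x(\eta),y(\eta))$ (with the remaining degenerations pushed into a codimension-two set) is also the paper's strategy for a \emph{single} copy $U_2'$. But there is a genuine gap in the sentence ``since $U_2=\bigcup_{\sigma\in\Sigma_3}\sigma(U_2')$, it suffices to treat $U_2'$.'' It does not suffice: a point can be a smooth boundary point of $U_2'$ and simultaneously a smooth boundary point of a different copy $\sigma(U_2')$, and at such a point $\partial U_2$ (hence $\partial\Omega$) is the union of two real hypersurfaces which may cross, so $\Omega$ locally looks like the complement of a wedge and the point is \emph{not} a smooth point of $\partial\Omega$. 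Your claim that ``every such point is a smooth point of $\partial\Omega$'' therefore fails unless you show that the overlap $\XX\cap\partial U_2'\cap\partial\sigma(U_2')$ is itself of Hausdorff codimension two, and nothing in your argument addresses this.

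This is exactly the paper's case II), and it is the nontrivial part of the proof: there one writes $(z_3',w_3')=(\omega x'm_\gamma(x'),\omega y'm_\gamma(y'))$, observes that membership in $\partial\sigma(U_2')$ for a second permutation produces an interpolating function independent of $z_1$ or $z_2$, and then applies Lemma~\ref{lem:convexcombination} to conclude that $\gamma$ must lie on the real line through $\alpha_1(\eta')$ and $\alpha_2(\eta')$ --- one extra real equation, which confines the overlap to a closed set $\mathcal J_3$ of codimension two that can be absorbed into $b_1\Omega$. You would need to supply this (or an equivalent) argument; the ``transversality check'' you flag at the end of your proposal is about the regularity of a single defining equation and does not substitute for it. A secondary, more minor issue: your claim that the degenerations with $\eta'\in\partial\Omega(\DD^2)$ ``impose at least two independent real-analytic conditions'' is not justified as stated; in the paper most of these cases are excluded not by a codimension count but by showing they force the problem to be degenerate or the point to lie in $\overline{U_1}$, contradicting $(z',w',\xi')\in\XX\setminus\overline{U_1}$.
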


\begin{remark}\label{rem:b_1}
Before we start to proof the lemma recall that we have already shown that the mapping 
$$\Phi:X(\DD^2) \to \Omega(\DD^2),$$ is surjective, $2-1$, and $\Phi(x,y,\alpha, t, \omega) = \Phi(-x,-y,-\alpha, t, \omega)$.

Thus from the equation $\Phi(x,y,\alpha, t, \omega)=(z,w,\xi)\in \Omega(\DD^2)$ we may derive $x,y, \alpha, t, \omega$ as real analytic functions of $(z,w,\xi)$, defined locally on $\Omega(\DD^2)$ (globally, up to a sign).
\end{remark}

\begin{proof}[Proof of Lemma~\ref{lem:b_1}]

We shall construct a relatively open subset $b_\infty \Omega$ of the boundary of $\Omega$ comprising points of smoothness of $\Omega$. To finish the proof it will be sufficient to show that the Hausdorff codimension of a set $b_1\Omega=(\partial \Omega \cap \XX) \setminus b_\infty \Omega$ is equal to $2$ (trivially $(\partial \Omega \cap \XX)\setminus b_\infty \Omega$ is closed in $\XX$). The set $b_1\Omega$ will be the union of sets $\mathcal U$, $\mathcal J_1$, $\mathcal J_2,$ and $\mathcal J_3$, where $\mathcal U$ appeared in Lemma~\ref{lem:prep1} and $\mathcal J_1,\mathcal J_2,\mathcal J_3$ are built in several steps in the sequel of the proof.

Take a point $(z',w',\xi')$ in the boundary of $\Omega=\mathbb X\setminus \overline{U_1\cup U_2}$ with respect to $\XX$. This means that $(z', w', \xi')\in \mathbb X \cap \partial (\overline{U_1\cup U_2}^\circ)$. It follows Lemma~\ref{lem:prep1} that $(z',w',\xi')\in \partial U_1$ implies $(z',w',\xi') \in \mathcal U$. Therefore it suffices to focus on the case when $(z',w',\xi')\in (\partial U_2)\setminus \overline{U_1}$. Then there is a permutation $\sigma$ such that  
\begin{equation}\label{eq:sigma_j}
(z',w',\xi') \in \partial\sigma (U_2').
\end{equation} 
Losing no generality suppose that \eqref{eq:sigma_j} holds for $\sigma=\id$.

Involving the case of the bidisc and using the notation from Remark~\ref{rem:b_1} one can see that $U_2'$ may be expressed in terms of $\Omega(\DD^2)$ as follows:
\begin{multline}\label{mult:U_2'} U_2' = \{(z,w,\xi):\ \eta(z,w,\xi)=(z_1,z_2,w_1, w_2 ,\xi)\in \Omega(\DD^2),\ |z_3|<|x (\eta)|,\\ |w_3|<|y(\eta)|,\ \rho\left(\frac{z_3}{x (\eta)}, \frac{w_3}{y (\eta)} \right) < \rho(x (\eta), y (\eta))\}.
\end{multline}
Denote $\eta'=\eta(z',w',\xi')= (z_1', z_2', w_1', w_2', \xi')$.

First consider the case when $\eta'\in \partial \Omega(\DD^2)$. Let $(z_\nu, w_\nu, \xi_\nu)_\nu$ be a sequence in $U_2'$ converging to $(z', w', \xi')$. Denote $\eta_\nu= \eta(z_\nu, w_\nu, \xi_\nu)$.
Take $(x_\nu, y_\nu, \alpha_\nu, t_\nu, \omega_\nu)$ in $X(\DD^2)$ such that $\Phi(x_\nu, y_\nu, \alpha_\nu, t_\nu, \omega_\nu) = \eta_\nu,$ $\nu \in \DD$.
Passing to a subsequence assume that $(x_\nu, y_\nu, \alpha_\nu, t_\nu, \omega_\nu)$ is convergent to $(x', y', \alpha', t', \omega')\in \partial X(\DD^2)$.  If $x',y'\in \DD$ and one of relations $x'=0$, $y'=0$, $x'=y'$, $t'\in \{0,1\}$ or $\alpha_1',\alpha_2'\in\DD$ are co-linear is satisfied, then the point $(z',w',\xi')$ lies in the following analytic set:
$$\mathcal J_1=\bigcup_i \{(z,w,\xi):\ z_i=0\ \vee\ w_i=0\ \vee\ z_i=w_i\ \vee\ \xi = [z_i: w_i]\}.
$$
Assume that $x',y',t',\alpha'$ do not satisfy these relation. We shall show that this contradicts the assumptions on $(z', w', \xi')$. Let us consider three cases.

a) $x',y'\in \DD$. Then $\alpha_j'\in \TT$ for some $j=1,2.$ Losing no generality assume that $j=1$. Then $z_1' = \alpha_1' x'$, $w_1' = \alpha_1' y'$, and $(z_2', w_2') = (z_1' \phi_1(z_1'), w_1' \phi_1 (w_1'))$ for some $\phi_1\in \mathcal O(\DD, \overline{\DD})$.

If $|z_3'|<|x'|$ and $|w_3'|<|y'|$, then $\rho(\frac{z_3'}{z_1'}, \frac{w_3'}{w_1'})\leq \rho(z_1', w_1')$. If these inequalities are not satisfied, then using the equalities $\rho(\frac{(z_3)_\nu}{x_\nu}, \frac{(w_3)_\nu}{y_\nu})\leq \rho(x_\nu, y_\nu)$ we infer that $z_3'=\omega x'$ and $w_3'=\omega y'$ for some unimodular constants $\omega$. Summing up,
there is $\phi_2\in \OO(\DD, \overline{\DD})$ such that $$(z_3', w_3') = (z_1' \phi_2(z_1'), w_1' \phi_2 (w_1')).$$ Thus, in this case $(z',w',\xi')$ lies in $\overline U_1$; a contradiction.

b) $x'\in \TT$ and $y'\in \DD$. Then $\alpha_1' = \alpha_2' = x'$, whence $|y'|=|w_1'|=|w_2'|$.
Thus here the problem Pick problem $\DD^2\to \DD$ corresponding to the data $(\eta',\xi')$ is degenerate. More precisely, the sub-problem containing $0$ and $(w_1', w_2')$ is extremal (see the proof of Lemma~\ref{lem:phixclosed}). Clearly, $|w_3'|\leq |y'|$. 
Thus the extremal Pick interpolation problem corresponding to $(z', w', \xi')$ is degenerate; a contradiction.

c) $x'\in \TT$ and $y'\in \TT$. Observe that $x'=y'=\alpha_1' = \alpha_2'$. Again, proceeding similarly as in the proof of Lemma~\ref{lem:phixclosed} we find that the extremal Pick problem $\DD^2\to \DD$ corresponding to the data $(\eta',\xi')$ is degenerate, that is the sub-problem composed of $z'$ and $w'$ is extremal. Then $\rho(\frac{(z_3)_\nu}{x_\nu}, \frac{(w_3)_\nu}{y_\nu})\leq \rho(x_\nu, y_\nu) = \rho(\frac{(z_j)_\nu}{x_\nu}, \frac{(w_j)_\nu}{x_\nu})$, $j=1,2$. Letting $\nu\to \infty$ we get $\rho(z'_3, w'_3)\leq \rho(z_j', w_j')$, $j=1,2$. Therefore the problem corresponding to $(z', w', \xi')$ is also degenerate, which again gives a contradiction.

\bigskip

We are left with the case $\eta'\in \Omega(\DD^2)$. If one of the equalities $|z_3'|<|x(\eta')|$, $|w_3'| < |y (\eta')|$ is not satisfied, there is a unimodular constant $\omega$ such that $z_3' = \omega x (\eta')$ and $w_3' = \omega y (\eta')$. 
Put 
\begin{multline}\mathcal J_2:=\bigcup_\sigma \sigma(\{(z,w,\xi):\ \eta=(z_1,z_2,w_1, w_2,\xi)\in \Omega(\DD^2),\\ z_3=\omega x(\eta), w_3=\omega y (\eta) \text{ for some } \omega\in \TT\}).
\end{multline} 
Note that the Hausdorff codimension of $\mathcal J_1$ is equal to $3$.

Suppose that $\eta'\in \Omega(\DD^2)$ and $|z_3'|<|x(\eta')|$, $|w_3'| < |y (\eta')|$. Then the boundary of $U_2'$ near $(z', w',\xi')$ is given by the equation $\rho(\frac{z_3}{x (\eta)}, \frac{w_3}{y (\eta)}) = \rho(x (\eta), y (\eta))$  and thus $U_2'$ is smooth here. 

We shall consider two possibilities depending on a number of permutation in $\Sigma_3$ condition~\eqref{eq:sigma_j} is satisfied for.

I) Suppose first that there is only one permutation for which \eqref{eq:sigma_j} is satisfied (we assume that it is the identity). This is a point of smoothness of $U_2$, as $U_2'$ is smooth there.
Denote the subset of such points in $\partial U_2'$ by $S$ and define \begin{equation}\label{eq:binfty}
b_\infty \Omega:=\bigcup_\sigma \sigma(S).
\end{equation}

II) Suppose that \eqref{eq:sigma_j} is satisfied for another permutation $\sigma$. First, observe that there are $\omega\in \TT$ and $\gamma\in \DD$ such that 
\begin{equation}\label{eq:z3}
(z'_3, w'_3) =  (\omega x(\eta') m_\gamma(x(\eta')), \omega y(\eta') m_\gamma(y(\eta'))).
\end{equation}
Let $\alpha(\eta')$ and $\omega(\eta')$ be as in Remark~\ref{rem:b_1}. Let  $\delta$ be a convex combination of $\alpha_1(\eta')$ and $\alpha_2(\eta')$ such that $\xi'=[x(\eta') m_\delta (x(\eta')): y(\eta') m_\delta (y(\eta'))]$.

Since $(z',w', \xi')$ is in $\partial \sigma(U_2')$, there is a function $F$ that interpolates the problem corresponding to $(z',w',\xi')$ independent on one of variables $z_1$ or $z_2$. Multiplying, if necessary, $F$ by a unimodular constant, we may assume that analytic discs $\lambda \mapsto F(\lambda m_{\alpha_1(\eta')} (\lambda), \omega(\eta') \lambda m_{\alpha_2(\eta')}(\lambda), \omega \lambda m_\gamma(\lambda))$ and $\lambda \mapsto \lambda m_{\delta}(\la)$ coincide at $0$, $x(\eta')$ and $y(\eta')$. Thus 
$$F(\lambda m_{\alpha_1(\eta')} (\lambda), \omega(\eta') \lambda m_{\alpha_2(\eta')}(\lambda), \omega \lambda m_\gamma(\lambda))= \lambda m_{\delta}(\lambda),\quad \la\in \DD.$$
It follows from Lemma~\ref{lem:convexcombination} that $\delta$ is a convex combination of $\gamma$ and $\alpha_1(\eta')$ or of $\gamma$ and $\alpha_2 (\eta')$. Both case imply that $\gamma$ lies on a real line passing through $\alpha_1(\eta')$, $\alpha_2(\eta')$. Therefore, making use of \eqref{eq:z3} we find $\XX\cap \partial U_2'\cap \partial \sigma(U_2')$ is contained in a set $\mathcal J_3$ whose Hausdorff codimension is equal to $2$.
\end{proof}

As an immediate consequence we get:
\begin{corollary}\label{cor:connected}
$\mathbb X\setminus b_1 \Omega$ is a domain.
\end{corollary}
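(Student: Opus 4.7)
The plan is to deduce this as a direct consequence of Lemma~\ref{lem:b_1} combined with the general principle recalled in the Preliminaries: removing from a domain a closed subset of Hausdorff codimension at least two leaves a domain.

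First I would establish that $\XX$ itself is a domain. This is precisely Lemma~\ref{lem:nondeg-connected} applied with $G=\dD_3$, so no new argument is needed. Hence $\XX$ is an open connected subset of $\dD_3\times \PP_1$, which as a smooth real manifold has real dimension $14$.

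Next I would invoke Lemma~\ref{lem:b_1}, which asserts that $b_1\Omega$ is closed in $\XX$ and has Hausdorff codimension $2$ in $\XX$. In particular, $\mathcal H^{13}(b_1\Omega)=0$ and $\XX\setminus b_1\Omega$ is open in $\XX$. It only remains to see that $\XX\setminus b_1\Omega$ is connected. The principle recalled in the Preliminaries (namely, that if $D$ is a domain in a real manifold of dimension $N$ and $A\subset D$ is closed with $\mathcal H^{N-1}(A)=0$, then $D\setminus A$ is again a domain) applies verbatim: $\XX$ plays the role of $D$ with $N=14$, and $b_1\Omega$ plays the role of $A$. Therefore $\XX\setminus b_1\Omega$ is a domain.

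No step is genuinely delicate here; the entire work was done in Lemma~\ref{lem:b_1}, where the codimension-$2$ decomposition of the topological boundary was painstakingly assembled out of the pieces $\mathcal U,\mathcal J_1,\mathcal J_2,\mathcal J_3$. If any subtlety arises, it is only in verifying that the codimension bound really translates into vanishing of $\mathcal H^{N-1}$-measure; this is a standard fact about Hausdorff measure and requires no further input.
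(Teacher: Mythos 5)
Your proposal is correct and is precisely the argument the paper leaves implicit (the corollary is stated as "an immediate consequence" of Lemma~\ref{lem:b_1}): $\XX$ is a domain by Lemma~\ref{lem:nondeg-connected}, $b_1\Omega$ is closed in $\XX$ of real codimension $2$, and the removal principle recalled in the Preliminaries yields connectedness. Your dimension count ($14$, hence $\mathcal H^{13}(b_1\Omega)=0$) is the right reading of that principle here.
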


\begin{lemma}\label{lem:eqbound}
Topological boundaries of $\Phi(X(\DD^3))$ and $\Omega(\DD^3)$ taken with respect to $\mathbb X\setminus b_1 \Omega$ do coincide.
\end{lemma}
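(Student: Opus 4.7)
The plan is to verify the two set-theoretic inclusions separately. The first inclusion, $\partial \Phi(X(\DD^3))\subset \partial \Omega(\DD^3)$ within $\XX\setminus b_1\Omega$, is a soft consequence of Lemmas~\ref{lem:phixopen} and \ref{lem:phixclosed}: since $\Phi(X(\DD^3))$ is open in $\Omega(\DD^3)$ and closed in $\Omega(\DD^3)$, and $\Omega(\DD^3)$ itself is open in $\XX$ (as the complement of the closed set $\overline{U_1\cup U_2}\cup \mathcal A_d$), we obtain $\overline{\Phi(X(\DD^3))}\cap \Omega(\DD^3)=\Phi(X(\DD^3))$, so that $\partial \Phi(X(\DD^3))\subset \XX\setminus \Omega(\DD^3)$; combined with $\overline{\Phi(X(\DD^3))}\subset \overline{\Omega(\DD^3)}$ this forces $\partial \Phi(X(\DD^3))\subset \partial \Omega(\DD^3)$, which restricted to $\XX\setminus b_1\Omega$ yields inclusion into $b_\infty \Omega$.

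For the reverse inclusion $b_\infty \Omega\subset \partial \Phi(X(\DD^3))$, I would fix $p\in b_\infty \Omega$ and construct a sequence in $\Phi(X(\DD^3))$ converging to $p$. After a permutation of variables one may assume $p=(z',w',\xi')\in S$, so that $\eta(p)=(z_1',z_2',w_1',w_2',\xi')\in \Omega(\DD^2)$ and the third components satisfy the boundary $\rho$-equality that defines $\partial U_2'$. Combining Lemma~\ref{main:bidisc} with the real-analytic parametrization of Remark~\ref{rem:b_1} supplies distinct $\alpha_1',\alpha_2'\in \DD$, $\omega_0'\in \TT$, $t'\in(0,1)$, $x',y'\in \DD$ with $\xi'=[x'm_\delta(x'):y'm_\delta(y')]$ for $\delta=t'\alpha_1'+(1-t')\alpha_2'$, while the $\rho$-equality yields $\gamma'\in \DD$ and $\omega'\in \TT$ with $z_3'=\omega' x'm_{\gamma'}(x')$ and $w_3'=\omega' y'm_{\gamma'}(y')$; in particular $\psi_{(\omega_0',\omega'),(\alpha_1',\alpha_2',\gamma')}$ is an analytic disc through $0,z',w'$.

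To produce the sequence I would then choose $\gamma_\nu'\to \gamma'$ with $\alpha_1',\alpha_2',\gamma_\nu'$ not co-linear, and $\eps_\nu\to 0^+$, and set
$$\tilde\alpha_\nu=(\alpha_1',\alpha_2',\gamma_\nu'),\quad t_\nu=\bigl(t'(1-\eps_\nu),(1-t')(1-\eps_\nu),\eps_\nu\bigr)\in \Delta_3^\circ,\quad \tilde\omega=(\omega_0',\omega').$$
A direct computation then gives $\Phi(x',y',\tilde\alpha_\nu,t_\nu,\tilde\omega)\to p$ as $\nu\to\infty$, because the third-component outputs tend to $(z_3',w_3')$ while the target parameter $\delta_\nu=(1-\eps_\nu)\delta+\eps_\nu\gamma_\nu'$ tends to $\delta$.

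The remaining task, which is also the main obstacle, is to verify that each quintuple $(x',y',\tilde\alpha_\nu,t_\nu,\tilde\omega)$ actually lies in $X(\DD^3)=\Phi^{-1}(\Omega(\DD^3))$; equivalently, that the image avoids $\overline{U_1\cup U_2}\cup \mathcal A_d$. Avoidance of $\mathcal A_d$ and of $\overline{U_1}$ is automatic by continuity together with the defining property of $S$; for $\sigma\neq \id$ the image stays outside $\overline{\sigma(U_2')}$ because $p\not\in \overline{\sigma(U_2')}$. The delicate point is to exclude the image from $\overline{U_2'}$ itself, since the construction places the third-coordinate data on the Möbius hypersurface $\rho(z_3/x(\eta),w_3/y(\eta))=\rho(x(\eta),y(\eta))$ defining $\partial U_2'$. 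I expect this to follow from a first-order transversality argument: with $\gamma_\nu'$ pushed off the line through $\alpha_1',\alpha_2'$, a suitable direction for the perturbation $\gamma_\nu'-\gamma'$ makes
$$\rho\Bigl(\frac{z_3^\nu}{x(\eta_\nu)},\frac{w_3^\nu}{y(\eta_\nu)}\Bigr)>\rho(x(\eta_\nu),y(\eta_\nu))$$
strictly for small $\nu>0$, placing the image on the $\Omega$-side of $\partial U_2'$. Verifying this transversality, using the implicit-function expansion of the bidisc inverse parametrization of Remark~\ref{rem:b_1} together with the Möbius identity $\rho(m_{\gamma_\nu'}(x'),m_{\gamma_\nu'}(y'))=\rho(x',y')$, is the technical core of the argument.
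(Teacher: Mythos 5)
Your first inclusion, $\partial \Phi(X(\DD^3))\subset \partial\Omega(\DD^3)$, is correct and is exactly the paper's argument (closedness plus openness of $\Phi(X)$ in the open set $\Omega$). The problem is the reverse inclusion, where your proof is not complete: you yourself isolate as ``the technical core'' the verification that the perturbed quintuples lie in $X(\DD^3)=\Phi^{-1}(\Omega(\DD^3))$, and then only say you \emph{expect} a first-order transversality computation to deliver it. That step is precisely where the content of the lemma sits, so deferring it leaves a genuine gap. Worse, the specific mechanism you propose is suspect: since $z_3^\nu=\omega' x' m_{\gamma_\nu'}(x')$ and $w_3^\nu=\omega' y' m_{\gamma_\nu'}(y')$, the identity $\rho(m_{\gamma_\nu'}(x'),m_{\gamma_\nu'}(y'))=\rho(x',y')$ shows the third-coordinate data satisfies the boundary \emph{equality} identically with respect to the original $x',y'$; any strict inequality $\rho(z_3^\nu/x(\eta_\nu),w_3^\nu/y(\eta_\nu))>\rho(x(\eta_\nu),y(\eta_\nu))$ could therefore only come from the shift of $(x(\eta_\nu),y(\eta_\nu))$ away from $(x',y')$ induced by changing $\xi$ (recall the fibre of the node map over fixed $(z_1,z_2,w_1,w_2)$ is one-dimensional), and you never analyze that shift. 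Note also that perturbing $\gamma'$ at all is unnecessary: points of $b_\infty\Omega$ already have $\alpha_1',\alpha_2',\gamma'$ non-co-linear, because the co-linear configurations were swept into $\mathcal J_3\subset b_1\Omega$ in the proof of Lemma~\ref{lem:b_1}.

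The paper avoids all of this by a much softer route that was available to you. A point of $b_\infty\Omega$ is exactly $\Phi(x',y',\alpha',t',\omega')$ with $\alpha'=(\alpha_1',\alpha_2',\gamma')$ not co-linear, $x'\neq y'$, $x'y'\neq 0$, and $t'\in b\Delta_3$ (the target parameter $\delta$ sits on an edge of the simplex). Now use the characterization of $X(\DD^3)$ given in Section~\ref{sec:main}: any quintuple with these properties and $t\in\Delta_3^\circ$ lies in $X(\DD^3)$, because if its image could be interpolated by a function of at most two of the variables, Lemma~\ref{lem:convexcombination} would force $t\cdot\alpha$ onto an edge of the simplex, contradicting $t\in\Delta_3^\circ$. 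Hence it suffices to keep $x',y',\alpha',\omega'$ (and therefore the nodes) fixed and push $t'$ into $\Delta_3^\circ$; the resulting points lie in $\Phi(X(\DD^3))$ and converge to the given boundary point, with no inequality-checking against $\overline{U_2'}$ required. You should either adopt this argument or actually carry out the computation you postponed; as written, the second inclusion is unproved.
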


\begin{proof}
Since $\Phi(X)$ is closed in $\Omega$ (see Lemma~\ref{lem:phixclosed}) we find that $\partial \Phi(X)$ is contained $\partial \Omega$. 

On the other hand the proof of Lemma~\ref{lem:b_1} (see formula~\eqref{eq:binfty}) contains a precise description of the topological boundary of $\Omega$ with respect to $\XX\setminus b_1\Omega $. Using this we get any point in $\partial\Omega\cap (\XX \setminus b_1\Omega)$ is of the form $(\psi(x), \psi(y),[xm_\delta(x): y m_\delta (y)])$, where $\psi_i(\lambda) = \omega_i m_{\alpha_i}(\lambda)$, $\alpha_i\in \DD$, $\omega_i$, $i=1,2,3$, and $\delta$ lies in the boundary of the simplex generated by $\alpha_1, \alpha_2, \alpha_3$.

Replacing $x$, $y$, $\alpha$, $\delta$ with $\omega x$, $\omega y$, $\omega \alpha$, $\omega\delta$, where $\omega$ is a proper unimodular constant, we may assume that $\omega_1=1$. Making use of the formula for $\Phi:X(\DD^3)\to \Omega(\DD^3)$ we get that $(\psi(x), \psi(y),[xm_\delta(x): y m_\delta (y)])$ lies in $\partial \Phi(X)$.
\end{proof}

\begin{lemma}\label{lem:Omega=Phi}
$\Phi(X(\DD^3))= \Omega(\DD^3)$.
\end{lemma}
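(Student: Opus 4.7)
The strategy is to combine three facts already proved---$\Phi(X)$ is open in $\Omega$ (Lemma~\ref{lem:phixopen}), closed in $\Omega$ (Lemma~\ref{lem:phixclosed}), and has the same topological boundary as $\Omega$ inside $B:=\XX\setminus b_1\Omega$ (Lemma~\ref{lem:eqbound})---with the smoothness information from Lemma~\ref{lem:b_1}. The main subtlety, absent in the bidisc case, is that $\Omega(\DD^3)$ need not be connected, so one cannot finish simply by invoking that a non-empty open-closed subset of a connected set equals it.

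The first step is a local matching claim: for every $q\in\partial_B\Omega$ there is a small open ball $U\subset B$ such that $\Phi(X)\cap U=\Omega\cap U$. By Corollary~\ref{cor:connected}, $B$ is a domain, and since $\partial_B\Omega=b_\infty\Omega$, Lemma~\ref{lem:b_1} guarantees that $\partial_B\Omega\cap U$ is a smooth real hypersurface separating $U$ into two connected components; denote by $U^+$ the one lying in $\Omega$, so that $U^+=\Omega\cap U$. Lemma~\ref{lem:eqbound} tells us that $q$ also belongs to $\partial_B\Phi(X)$, so $\Phi(X)\cap U$ is non-empty; since $\Phi(X)\subset\Omega$, one has $\Phi(X)\cap U\subset U^+$, while its relative boundary in $U$ is contained in $\partial_B\Phi(X)\cap U=\partial_B\Omega\cap U$. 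Hence $\Phi(X)\cap U$ is both open and closed in the connected set $U^+$, forcing $\Phi(X)\cap U=U^+=\Omega\cap U$.

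To globalize, let $C$ be any connected component of $\Omega$. Because $U_1\cup U_2$ is non-empty, $\Omega\subsetneq B$, so $C$ has non-empty boundary in the connected space $B$, with $\partial_B C\subset\partial_B\Omega$. Fix $q\in\partial_B C$ and apply the local step: the connected set $U^+$ is contained in $\Omega$ and contains points of $C$ arbitrarily close to $q$, so the component $C$ must contain all of $U^+$, while the local step also gives $U^+\subset\Phi(X)$. Consequently $\Phi(X)\cap C\neq\emptyset$. Lemmas~\ref{lem:phixopen} and~\ref{lem:phixclosed} show that $\Phi(X)$ is a union of connected components of $\Omega$; since it meets each of them, $\Phi(X)\supset\Omega$, and the reverse inclusion is automatic, so $\Phi(X)=\Omega$. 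The main obstacle is precisely this globalization step: the possible disconnectedness of $\Omega(\DD^3)$ is bypassed by the two-sided-hypersurface picture of $\partial_B\Omega$ supplied by Lemma~\ref{lem:b_1}, whose codimension-two estimate for $b_1\Omega$ is exactly what makes the ambient space $B$ connected enough to link components of $\Omega$ through their common boundary.
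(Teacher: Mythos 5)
Your proof is correct and follows essentially the same route as the paper's: the same ingredients (openness and closedness of $\Phi(X)$ in $\Omega$, coincidence of the boundaries inside $\XX\setminus b_1\Omega$, smoothness of $b_\infty\Omega$, and connectedness of $\XX\setminus b_1\Omega$) are combined through the same local matching of $\Phi(X)$ and $\Omega$ near a smooth common boundary point. The only difference is cosmetic and lies in the globalization: the paper runs a curve from a hypothetical point of $\Omega\setminus\Phi(X)$ to a point of $\Phi(X)$ and derives a contradiction at the first exit from $\Omega$, whereas you show directly that every connected component of $\Omega$ reaches $b_\infty\Omega$ and hence meets $\Phi(X)$, after which open-closedness finishes.
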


\begin{proof}
Assume that the inclusion $\Phi(X)\subset \Omega$ is strict. Fix $x\in \Omega\setminus \Phi(X)$ and any $y\in \Phi(X)$. Thanks to Corollary~\ref{cor:connected} we may join $x$ and $y$ with a curve $\gamma:[0,1]\to \mathbb X\setminus b_1 \Omega$ starting at $x$. Let $t_0= \sup t$, where the supremum is taken over $t>0$ such that $\gamma([0,t))$ is contained in $\Omega$.

\begin{claim}\label{claim:gamma} There is no $s>0$ such that $\gamma([0,s])\subset \Omega$ and $\gamma(s)\in \Phi(X)$.
\end{claim}
\begin{proof}[Proof of claim]
If the claim were not true, $\gamma|_{[0,s]}$ would not intersect the boundary of $\Phi(X)$, by Lemma~\ref{lem:eqbound}. This implies that $\gamma([0,s])\subset \Phi(X)$; a contradiction. \end{proof}

We are coming back to the proof of the lemma. An immediate consequence of Claim~\ref{claim:gamma} is that $t_0\in (0,1)$. Of course $\gamma(t_0)$ lies in the boundary of $\Omega$. Since $\gamma(t_0)$ is a smooth point of $\Omega$ we may change the variables so that in a small neighborhood $U(t_0)$ of the point $\gamma(t_0)$ the boundary $\partial \Omega$ is a graph of a function and $U(t_0)\cap \Omega$ lies above it. It follows from Lemma~\ref{lem:eqbound} that $\Phi(X)\cap U(t_0)$ is a component of $U(t_0)\setminus \partial \Omega$ and since $\Phi(X)\subset \Omega$ we find that $\Phi(X)\cap U(t_0) = \Omega \cap U(t_0)$, providing that $U(t_0)$ is small enough. Consequently, $\gamma(t)\in \Phi(X)$ for $t<t_0$ sufficiently close to $t_0$, contradicting Claim~\ref{claim:gamma}.
\end{proof}

\begin{remark}\label{rem:connected}
Note that the set of triples $(\alpha_1, \alpha_2, \alpha_3)$, where $\alpha_j\in \DD^3$ are pairwise distinct and not co-linear, has two connected components. Clearly, if $\alpha$ belongs to one of them, $-\alpha$ lies in the second one. 

Consequently, $X(\DD^3)$ has two connected components. On the other hand, since $\Phi$ is even with respect to $(x,y, \alpha)$, we find that $\Omega(\DD^3)$ is connected.
\end{remark}

\begin{proof}[Proof of Lemma~\ref{main:tridisc}]
The result is a direct consequence of Lemma~\ref{lem:Omega=Phi}, the definition of $\Omega(\DD^3)$, the formula for $\Phi:X(\DD^3) \to \Omega(\DD^3)$, and Lemma~\ref{main:bidisc} together with Remark~\ref{rem:degreeb}.
\end{proof}

\section{Proof for $\DD^n$, $n\geq 4$}

\begin{remark}\label{rem:dense}
Suppose that the assertion of Theorem~\ref{main} is satisfied for the extremal problem corresponding to $(z_\nu,w_\nu,\xi_\nu)\subset \dD_n\times \PP_1$ for all $\nu \in \NN$. Suppose also $(z_\nu, w_\nu, \xi_\nu)$ converges to $(z_0,w_0,\xi_0)\in \dD_n \times \PP_1$. Then it is simple to see that either the extremal Pick interpolation problem corresponding to $(z_0,w_0,\xi_0)$ is degenerate or it is a left inverse to a $3$-complex geodesic passing through the nodes, i.e. it also satisfies the assertion of Theorem~\ref{main}.
\end{remark}

\begin{proof}[Proof of Lemma~\ref{main:poli}] 
The proof is quite technical. Some methods and arguments used below are similar to the ones involved in the proof of Theorem~\ref{main:tridisc}. In that case we shall sketch them rather then provide an accurate proof. 

It follows from Remark~\ref{rem:dense} that it is enough to show the assertion for a dense subset of $\dD_n\times \PP_1$.

For our convenience we shall remove a slightly bigger set from $\DD^n\times \DD^n \times \PP_1$. Let $\mathcal I=\{(z,w)\in \DD^n\times \DD^n:\ z_i=w_i \text{ or } z_i=0 \text{ or } w_i=0 \text{ for some }i\}$ and let $\dD_n'=\DD^n\times \DD^n\setminus \mathcal I$. Put $z_I=(z_{i_1},\ldots, z_{i_k})$, where $I=\{i_1,\ldots, i_k\},$  $1\leq i_1< \ldots < i_k\leq n$, $k\leq n$. Let $\mathcal B_d$ denote the set of points $(z,w,\xi)$ in $\dD_n' \times \PP_1$ such that for some $I$ the three-point Pick problem corresponding to the data $(z_I, w_I, \xi)$ is degenerate for some non-empty $I\subset \{1,\ldots, n\}$. Note that $\mathcal B_d$ is closed and that it is contained in the union of $\mathcal A_0$, $\mathcal A_d'$ and $\mathcal A_d''$. Thus the argument used in Lemma~\ref{lem:nondeg-connected} (with exactly the same proof) shows that $\XX':=(\dD_n'\times \PP_1)\setminus \mathcal B_d$ is a domain.

Let $b_1\Omega$ be as in Lemma~\ref{lem:b_1}. Put $b_n\Omega=\{(z,w,\xi)\in \XX':\ (z_I, w_I, \xi)\in b_1\Omega \text{ for some } I=\{i_1, i_2, i_3\}\}$. Since $b_n\Omega$ is closed in $\XX'$ and its Hausdorff codimension is equal to $2$ we infer that $\YY:=\XX'\setminus b_n\Omega$ is a domain.

We shall divide $\YY$ onto sets composed of points lying on $3$-extremals and such that the extremal Pick problem corresponding to them is strictly $j$ dimensional, $j=1,2,3$. These sets will be denoted respectively by $\Omega_j$, $j=1,2,3$. To get the assertion we shall show that the closure of $\Omega_1\cup \Omega_2 \cup \Omega_3$ is open in $\YY\setminus \mathcal A$ for some set $\mathcal A$ closed in $\YY$ and whose Hausdorff codimension is equal to $2$. The set $\mathcal A$ will be expressed as the union of $\mathcal A_i$, $i=1,2,3$, constructed in three steps below.

To simplify some expressions we shall introduce additional notation.
Let $$\psi_{\phi}(\lambda) = (\la, \la\phi_1(\la), \ldots, \la \phi_{n-1}(\la)),\quad \lambda\in \DD,$$ where $\phi\in \mathcal O(\DD, \DD^{n-1})$. Moreover, put $\omega_0=1$ and denote
$$\psi_{\alpha,\phi,\omega}(\la) = (\omega_0\la m_{\alpha_1}(\la),\ldots, \omega_{k-1}\la m_{\alpha_k}(\la), \la \phi_1(\la), \ldots, \la \phi_{n-k}(\la)),$$ where $\alpha\in \DD_k$, $\phi\in \mathcal O(\DD, \DD^{n-k})$, $\omega=(\omega_1,\ldots, \omega_{k-1})\in \TT^{k-1}$, $k=2,3$. Here $\DD_2$ (respectively $\DD_3$) denotes the subset of the bidisc (resp. tridisc) composed of $\alpha$ such that $\alpha_1\neq\alpha_2$ (resp. $\alpha_1$, $\alpha_2$, $\alpha_3$ are not co-linear). Let us define
$$
D_1=\{(\psi_{\phi}(x), \psi_{\phi}(y), \xi):\ x,y\in \DD\setminus \{0\},\ x\neq y,\ \phi\in \OO(\DD, \DD^{n-1})\}
$$
and
\begin{multline}
\nonumber
D_k = \{(\psi_{\alpha,\phi, \omega}(x), \psi_{\alpha, \phi, \omega}(y), [x m_{t\cdot \alpha}(x): y m_{t\cdot \alpha}(y)]):\ \alpha\in \DD_k,\\ x,y\in \DD\setminus \{0\},\ x\neq y,\ \omega\in \TT^{k-1}, \phi\in \mathcal O(\DD, \DD^{n-k}), t\in \Delta_k^\circ \},\quad k=2,3.
\end{multline}
Finally, we put $$\Omega_k=\bigcup_\sigma \sigma(D_k),\quad k=1,2,3,$$
where the union is taken over $n$-fold permutations.

To deal with the boundary of $\overline{\Omega_1 \cup \Omega_2 \cup \Omega_3}^\circ$ we shall focus on boundaries of $D_k$, $k=1,2,3$.

1) $D_1$ may be expressed as 
\begin{multline}\nonumber \{(z,w,\xi)\in \dD_n\times \PP_1:\ |z_j|<|z_1|,\ |w_j|<|w_1|,\\ \rho(z_j/z_1, w_j/w_1)< \rho(z_1,w_1),\quad j=2,\ldots,n \}.
\end{multline} 
In $\partial D_1$ we may distinguish special sets $$\{(z,w,\xi):\ z_j = \omega z_1, w_j = \omega w_1 \text{ for some } j=2,\ldots,n, \text{ and }\omega\in \TT\}$$ and 
\begin{multline}\nonumber \{(z,w,\xi):\ |z_j|<|z_1|, |w_j|<|w_1|,\quad j=2,\ldots,n, \text{ and}\\ \rho(z_k/z_1, w_k/w_1)= \rho(z_1, w_1) \text{ for two different indexes } k\}.
\end{multline} 
Denote their union by $\mathcal A_1'$ and put $\mathcal A_1=\bigcup_\sigma \sigma(\mathcal A_1')$, where the union is taken over $n$-fold permutations. Note that $\mathcal A_1$ is closed in $\YY$ and the Hausdorff codimension of $\mathcal A_1$ is equal to $2$.

Take a point $(z',w',\xi')$ in the boundary of $D_1$ with respect to $\YY$ omitting $\mathcal A_1$. We aim at showing that this points lies in $\overline{\Omega_1\cup \Omega_2 \cup \Omega_3}^\circ.$ 

There is $j$ such that $\rho(z_j'/z_1', w'_j/w'_1)= \rho(z'_1,w'_1)$ and 
\begin{equation}\label{eq:p} \rho\left(\frac{z'_k}{z'_1}, \frac{w'_k}{w'_1}\right) < \rho(z'_1,w'_1)
\end{equation}
for other indexes $k\geq 2$, $k\neq j$. Losing no generality we may assume that $j=2$. If $(z,w,\xi)$ is close enough to $(z',w',\xi')$, relations \eqref{eq:p} remain true. Making use of these inequalities and applying Lemma~\ref{main:bidisc} we find that $(z,w,\xi)\in \overline{D_1\cup D_2}^\circ$. 

2) We shall focus on $\partial D_2 \setminus \overline{\Omega_1}$. Note that $D_2$ may be expressed in terms of the inverse of $\Phi:X(\DD^2) \to \Omega(\DD^2)$ (see Remark~\ref{rem:b_1}) in the following way:
\begin{multline}\nonumber
D_2=\{(z,w,\xi):\ \eta=\eta(z,w,\xi)=((z_1,z_2), (w_1,w_2),\xi)\in \Omega(\DD^2),\ |z_j|<|x(\eta)|,\\ |w_j|<|y(\eta)|,\ \rho\left(\frac{z_j}{x(\eta)}, \frac{w_j}{y(\eta)} \right) < \rho(x(\eta), y(\eta)),\ j=3,\ldots,n\}.
\end{multline}

There is a closed set $\mathcal A_2$ in $\YY$ of Hausdorff codimension $2$ such that any point $(z,w,\xi)$ in $\partial D_2 \cap \YY$ satisfying
\begin{itemize}
\item $\eta\in \partial \Omega(\DD^2)$, or
\item $\eta\in \Omega(\DD^2)$ and $|z_j|=|x(\eta)|$ for some $j$, or
\item $\eta\in \Omega(\DD^2)$ and the equalities $\rho(\frac{z_j}{x(\eta)}, \frac{w_j}{y(\eta)}) = \rho(x(\eta), y(\eta))$ are satisfied for at least two distinct indexes $j\geq 3$,
\end{itemize}
lies either in $\overline{\Omega_1}$ or in $\mathcal A_2$.
Again, we may modify it so that $\sigma(\mathcal A_2) = \mathcal A_2$ for any $n$-fold permutation $\sigma$.

Take a point $(z',w',\xi')$ in $\partial D_2\cap \XX$ omitting $\mathcal A_2\cup \overline{\Omega_1}$. Then, up to a permutation of components, the following relations hold: $\eta'=((z_1', z_2'),(w_1', w_2'), \xi') \in \Omega(\DD^2)$, 
$|z'_k|<|x(\eta')|$, $|w_k'|<|y(\eta')|$ for $k\geq 3$ and 
$$
\rho\left(\frac{z'_3}{x(\eta')}, \frac{w'_3}{y(\eta')}\right) = \rho(x(\eta'), y(\eta')), \quad 
\rho\left(\frac{z'_j}{x(\eta')}, \frac{w'_j}{y(\eta')}\right) < \rho(x(\eta'), y(\eta')) \text{ for } j\geq 4.
$$ 
In particular, $D_2$ is smooth in a neighborhood of $(z',w',\xi')$. 

Denote $x'=x(\eta')$, $y'=y(\eta'),$ $t'=(t_1(\eta'), t_2(\eta'), 0),$ and $\alpha_j'=\alpha_j(\eta')$, $j=1,2.$ Let $\alpha_3'$ and $\omega_3'$ be such that $(z_3', w_3')= (\omega_3' x' m_{\alpha_3'}(x'), \omega_3' y' m_{\alpha_3'}(y'))$.

Let $(z,w,\xi)\in \YY$ be close enough to $(z',w',\xi')$ and such that $(z,w,\xi)\not\in \overline D_2$. Since $(z,w,\xi) \not\in b_n\Omega$ we see that $\theta(z',w',\xi')=((z_1',z_2',z_3'), (w_1', w_2', w_3'),\xi')$ is a point of smoothness of $\Omega(\DD^3)$ and that $\theta(z,w,\xi)=((z_1, z_2, z_3), (w_1, w_2, w_3),\xi)\in \Omega(\DD^3)$ (see Lemma~\ref{lem:b_1} for details). In particular, there is $(x,y,\omega, \alpha, t)\in X(\DD^3)$ such that 
$$
\Phi(x,y, \alpha,\omega, t) = \theta(z,w,\xi).
$$
\begin{claim} Up to a sign, $x$ is close to $x(\eta')$ providing that $(z,w,\xi)\not\in \overline{D_2}$ is close enough to $(z',w',\xi')$.
\end{claim}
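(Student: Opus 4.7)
The plan is to show that every subsequential limit of $x_\nu$ equals $\pm x(\eta')$. Starting from any sequence $(z_\nu,w_\nu,\xi_\nu)\to(z',w',\xi')$ outside $\overline{D_2}$, I would extract a convergent subsequence of the tridisc preimages $(x_\nu,y_\nu,\alpha_\nu,\omega_\nu,t_\nu)\in X(\DD^3)$ so as to obtain a limit $(x^\ast,y^\ast,\alpha^\ast,\omega^\ast,t^\ast)\in\overline{X(\DD^3)}$. Continuity of $\Phi$ then yields $\Phi(x^\ast,y^\ast,\alpha^\ast,\omega^\ast,t^\ast)=\theta(z',w',\xi')$; in particular
\[
x^\ast m_{\alpha_1^\ast}(x^\ast)=z_1',\qquad \omega_1^\ast x^\ast m_{\alpha_2^\ast}(x^\ast)=z_2',
\]
together with the analogous identities for $y^\ast$ and the projective equality $\xi'=[x^\ast m_{t^\ast\cdot\alpha^\ast}(x^\ast):y^\ast m_{t^\ast\cdot\alpha^\ast}(y^\ast)]$.

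The central step is to recognise that this limit effectively lives inside the bidisc construction at $\eta'$. Because $(z',w',\xi')\in\overline{D_2}\setminus\overline{\Omega_1}$, a normal-families argument applied to the interpolating functions of points of $D_2$ approaching $(z',w',\xi')$ produces an extremal interpolating function $F$ at $(z',w',\xi')$ which, after the appropriate permutation, depends only on the first two variables. Composing $F$ with the $3$-complex geodesic $\lambda\mapsto(\lambda m_{\alpha_1^\ast}(\lambda),\omega_1^\ast\lambda m_{\alpha_2^\ast}(\lambda),\omega_2^\ast\lambda m_{\alpha_3^\ast}(\lambda))$ gives $\lambda m_{t^\ast\cdot\alpha^\ast}(\lambda)$ on one hand, while Lemma~\ref{lem:convexcombination} applied in $\DD^2$ identifies the same composition with $\lambda m_{s\alpha_1^\ast+(1-s)\alpha_2^\ast}(\lambda)$ for some $s\in[0,1]$. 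Consequently $t^\ast\cdot\alpha^\ast=s\alpha_1^\ast+(1-s)\alpha_2^\ast$.

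It follows that the tuple $(x^\ast,y^\ast,(\alpha_1^\ast,\alpha_2^\ast),(s,1-s),\omega_1^\ast)$ lies in $\overline{X(\DD^2)}$ and is carried by the bidisc map $\Phi_2$ to $\eta'$. Since $\eta'\in\Omega(\DD^2)$ and each degeneracy defining $\partial X(\DD^2)$ (namely $s\in\{0,1\}$, $\alpha_1^\ast=\alpha_2^\ast$, $x^\ast=y^\ast$, $x^\ast y^\ast=0$, or $\{x^\ast,y^\ast\}\cap\TT\neq\emptyset$) would force the image into $\partial\Omega(\DD^2)$ by the analysis already carried out in Lemma~\ref{lem:phixclosed} for the bidisc, the tuple must actually lie in $X(\DD^2)$. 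Hence by the $2$-to-$1$, even-in-$(x,y,\alpha)$ injectivity of $\Phi_2$ (Claim~\ref{claim} for $n=2$) we must have $x^\ast=\pm x(\eta')$; the two signs correspond to distinct points because $x(\eta')\neq 0$. Thus, for $(z,w,\xi)\not\in\overline{D_2}$ sufficiently close to $(z',w',\xi')$, the parameter $x$ is close to $x(\eta')$ or to $-x(\eta')$. The main obstacle I anticipate is justifying that the limiting interpolating function $F$ retains its two-variable dependence; this is exactly where the exclusion $(z',w',\xi')\notin\overline{\Omega_1}$ is crucial, preventing the Montel limit from collapsing to a one-variable interpolant.
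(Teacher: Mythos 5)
Your proposal is correct and follows essentially the same route as the paper: pass to a convergent subsequence of the $X(\DD^3)$-parameters, rule out boundary degeneracies of the limit using the non-degeneracy built into $\YY$, use a two-variable interpolating function together with Lemma~\ref{lem:convexcombination} to rewrite the limit equation as an identity for the bidisc map $\Phi:X(\DD^2)\to\Omega(\DD^2)$, and invoke its $2$-to-$1$ injectivity (Claim~\ref{claim}) to force $x^\ast=\pm x(\eta')$. The only cosmetic difference is that you obtain the two-variable interpolant by a normal-families limit, whereas the paper reads it off directly from $t_3'=0$ and $\eta'\in\Omega(\DD^2)$; both are fine.
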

\begin{proof}[Proof of claim] Assume a contrary. Then there is a sequence $(x_\nu, y_\nu, \alpha_\nu, \omega_\nu, t_\nu)_\nu \subset X(\DD^3)$ such that 
$\Phi(x_\nu, y_\nu, \alpha_\nu, \omega_\nu, t_\nu)\to \theta(z', w', \xi')$ and $x_\nu$ is far away from $x'$ and $-x'$. We may assume that this sequence is convergent in $\overline{X(\DD^3)}$. Let us denote its limit point by $(x^0,y^0,\alpha^0, \omega^0, t^0)$. Note that $\alpha^0_j\not\in \TT$, as otherwise the interpolation problem corresponding to $((z_1',z_2',z_3'), (w_1', w_2', w_3'), \xi')$ would be degenerate and consequently $(z',w',\xi')\in \mathcal B_d$. Similarly, $x^0\neq y^0$, $x^0, y^0\in \DD$ and $\alpha_1^0\neq \alpha_2^0$. In particular, $\Phi$ extends analytically in a neighborhood of $(x^0, y^0, \alpha^0, \omega^0, t^0)$ and 
\begin{equation}\label{eq:Phikoniec}
\Phi(x^0,y^0, \alpha^0, \omega^0, t^0)= \theta(z',w',\xi') = \Phi(x', y', \alpha', \omega', t').
\end{equation}
Let $F$ be a function depending on the first two variables interpolating the problem corresponding to $((z_1', z_2', z_3'), (w_1', w_2', w_3'), \xi')$ (recall that $t'_3=0$). From \eqref{eq:Phikoniec} we deduce that $F(\la m_{\alpha_1^0}(\la), \la m_{\alpha_2^0}(\la)) = \gamma \la m_{t^0\cdot \alpha^0}(\la)$, $\la\in \DD$ for some unimodular constant $\gamma$. Lemma~\ref{lem:convexcombination} implies that $t^0\cdot \alpha^0= s^0\cdot (\alpha_1^0, \alpha_2^0)$ for some $s^0\in \Delta_2^\circ$. We may rewrite~\eqref{eq:Phikoniec} in terms of $\Phi:X(\DD^2)\to \Omega(\DD^2)$ obtaining:
$$\Phi(x^0, y^0, (\alpha^0_1, \alpha_2^0), \omega_1^0, s^0) = \Phi(x', y', (\alpha_1', \alpha_2'), \omega_1', (t_1', t_2')).$$ It follows from Claim~\ref{claim} that either $x^0=x'$ or $x^0=-x'$. This gives a contradiction.
\end{proof}
It follows from the claim that the inequalities $\rho(z_j/x, w_j/y)< \rho(x,y)$ hold for every $j\geq 4$ if $(z,w,\xi)\not\in \overline D_2$ is sufficiently close to $(z',w', \xi')$.
Consequently, $(z,w,\xi)\in D_3$, whence $(z',w',\xi') \in \overline{D_2\cup D_3}^\circ$.

3) We are left with the boundary of $D_3$. Here we may express $D_3$ in terms of the inverse to $\Phi:X(\DD^3)\to \Omega(\DD^3)$ as follows:
\begin{multline}\nonumber
D_3=\{(z,w,\xi):\ \theta=((z_1, z_2, z_3), (w_1, w_2, w_3), \xi)\in \Omega(\DD^3),\\ \rho\left(\frac{z_j}{x(\theta)}, \frac{w_j}{y(\theta) }\right) < \rho(x(\theta), y(\theta)),\ j\geq 4\}.
\end{multline}
Let $(z',w',\xi')\in \YY\cap \partial D_3$. Similarly as before it is elementary to see that either $(z',w',\xi') \in \overline{D_1 \cup D_2}\cup \mathcal A_3$ for some closed set of Hausdorff codimension equal to $2$, or $\theta'=\theta(z',w',\xi)\in \Omega(\DD^3)$ and the equality
\begin{equation}
\label{eq:lst}
\rho\left(\frac{z'_j}{x(\theta')}, \frac{w'_j}{y(\theta') }\right) = \rho(x(\theta'), y(\theta'))
\end{equation}
is satisfied for exactly one $j$. Again we may replace, if necessary, $\mathcal A_3$ with $\bigcup_\sigma \sigma(\mathcal A_3)$.

If the second possibility mentioned above holds, that is when $\theta'\in \Omega(\DD^3)$ and equality \eqref{eq:lst} is satisfied, the boundary of $D_3$ is smooth in a neighborhood of $(z',w',\xi')$. 

Let $\omega\in \TT$ and $\alpha_4 \in \DD$ be such that 
$$
(z_j', w_j') = (\omega x(\theta') m_{\alpha_4}  (x(\theta')), \omega y(\theta') m_{\alpha_4}  (y(\theta'))).
$$

Clearly, $t(\theta')\cdot \alpha(\theta')$ lies in the open simplex generated by $\alpha_1(\theta'), \alpha_2(\theta'), \alpha_3(\theta')$. However, there is another simplex, say $\Delta_1$, containing $t(\theta')\cdot \alpha(\theta')$ that is generated by some three from four points $\alpha_1(\theta'),$ $\alpha_2(\theta'),$ $\alpha_3(\theta'),$ $\alpha_4$.

Thus, for some permutation $\sigma$ that switches $j$-th element with the 4-th one for some $j=1,2,3,$ the point $(z', w', \xi')$ lies in $\sigma(\overline{D_3})$. If $t(\theta')\cdot \alpha(\theta')$ lies in boundary of simplex $\Delta_1$, then the point $(z',w',\xi')$ lies in $\overline{\sigma(D_1)\cup \sigma(D_2)}$. That case is covered by considerations conducted above by 1) and 2). Otherwise $(z',w',\xi')$ is a point of smoothness $\sigma(D_3)$.

Note that in a neighborhood of this point the boundaries of $D_3$ and $\sigma(D_3)$ coincide. Thus to finish the proof we need to show that $D_3$ and $\sigma(D_3)$ are not equal in a small neighborhood of $(z',w',\xi')$. From this we will find immediately that $(z',w', \xi') \in \overline{D_3\cup \sigma(D_3)}^\circ.$

To show that $D_3$ and $\sigma(D_3)$ are not equal we shall prove more, namely that $D_3\cap \sigma(D_3)=\emptyset$. Assume the contrary, and take $(z,w,\xi)\in D_3 \cap \sigma(D_3)$. Then $(z,w,\xi) = (\psi_{\alpha, \phi, \omega}(x), \psi_{\alpha, \phi, \omega}(y))$ for proper $\alpha, \phi, \omega$ and $x,y$. On the other hand, since $(z,w,\xi)$ lies in $\sigma(D_3)$, there exists a function $F$ interpolating the extremal problem corresponding to $(z,w,\xi)$ which is in independent on $j$-th variable, where $j=1,2,3,$ is such that the permutation $\sigma$ switches $j$-th element with $4$.

Clearly $F\circ \psi_{\alpha,\phi, \omega}$ is a Blaschke product of degree $2$, so according to Remark~\ref{rem:degreeb}, $F$ is independent of the $4$-th variable, which means that the extremal problem corresponding to $(z,w,\xi)$ is $2$-dimensional. But this contradicts the fact that $(z,w,\xi)\in D_3.$

\end{proof}

\section{Relations with Geometric Function Theory problems}\label{sec:GTFP}

The results announced here have some interesting relations with Geometric Function Theory problems. We outline them below. All necessary definitions of objects we are dealing with and that are skipped here may be found in \cite{Jar-Pfl 2013}.

The most direct application is the Coman conjecture for the polydisc stating that the Green and Lempert functions with two poles of equal weights are equal for $\DD^n$. To state it we need some definitions:
Let $D$ ba a domain in $\CC^n$ and $\nu:D\to \mathbb R_+$ be a $\nu$-admissible (i.e. its support $\supp \nu$ is finite). Define $$d(\psi)=\sum_{w\in \supp \nu} \inf \{\nu(w) \log|\zeta|:\ \zeta\in \psi^{-1}(w)\}$$ and $$l_D(z, \nu) = \inf\{ d(\psi):\ \psi(0)=z,\ \psi\ \text{is a $\mu$-admissible disk}\}.$$ A function $l_D$ is called the \emph{generalized Lempert function} Moreover, let $$c_D(z, \nu)= \sup\{ \log|f(z)|:\ f\in \mathcal O(D, \mathbb D),\ \nu_{\log |f|}\geq \nu\}.$$ We call $c_D$ the \emph{generalized Carath\'eodory function}.

For a plurisubharmonic function $u$ in a neighborhood of a point $z_0$ in $\mathbb C^n$, the \emph{Lelong number} of $u$ at $z_0$, denoted throughout the paper by $\nu_u(z_0)$, is defined as the supremum of all $\nu$ such that $u(z)\leq \nu \log||z-z_0|| + O(1)$ for $z$ sufficiently close to $z_0$.

Recall the \emph{pluricomplex Green function with several poles} determined by $\nu$, where $\nu:D\to \mathbb R_+$ is admissible, $g_{D}(z,\nu)$ is given by
$$g_D (z,\nu)= \sup\{u<0:\ u\in \mathcal{PSH}_{-}(D),\ \nu_u\geq \nu\}.$$ The inequalities $c_D\leq g_D\leq l_D$ are trivial.

Coman conjectured in \cite{Com 2000} that the equality $g_g\equiv l_D$ holds. The equality was proved to be true for the Euclidean ball in $\CC^n$ and the bidisc in the case of two points with equal weight, i.e. if $\nu=\delta_p + \delta_q$. Nevertheless, it turned out that in general the conjecture is not true.

We shall need the following:
\begin{lemma} Let $\nu= \sum_{j=1}^N \delta_{p_j}$, $p_j\in D$. Then the equality $$c_D(z, \nu) = l_D(z,\nu)$$ holds if and only if a function interpolating the extremal problem
$$\begin{cases}
z\mapsto \sigma\\
p_1\mapsto \tau\\
\ldots\\
p_N\mapsto \tau
\end{cases}$$
is a left inverse to a $3$-complex geodesic passing through the nodes $z,p_1,\ldots, p_N$.
\end{lemma}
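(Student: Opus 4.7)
The plan is to set up a clean dictionary between the Pick problem and the Carath\'eodory/Lempert functionals and then reduce both implications to the equality case of Schwarz's lemma applied to $f\circ\psi$ on the unit disc. After composing with an automorphism of $\DD$ I may assume $\tau=0$: any interpolating $f:D\to\DD$ is then admissible in the definition of $c_D(z,\nu)$ and realises $c_D(z,\nu)\ge\log|f(z)|=\log|\sigma|$, while any holomorphic disc $\psi:\DD\to D$ with $\psi(0)=z$ and $\psi(\zeta_j)=p_j$ is admissible in $l_D(z,\nu)$ and supplies $l_D(z,\nu)\le\sum_j\log|\zeta_j|$. The routine inequality $c_D\le l_D$ comes from the Schwarz estimate $\log|f(z)|\le\sum_j\log|\zeta_j|$ applied to $f\circ\psi$ for any admissible pair, and the whole lemma hinges on the equality case of that estimate.

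For the easy direction, suppose an interpolating $f$ is a left inverse to a $3$-complex geodesic $\psi$ passing through the nodes; reparametrising the disc I may take $\psi(0)=z$ and $\psi(\zeta_j)=p_j$. By definition $b:=f\circ\psi$ is a Blaschke product of degree at most $2$ on $\DD$ with zeros at $\zeta_1,\dots,\zeta_N$, which forces $N\le 2$ and
$$
b=\eta\prod_{j=1}^N m_{\zeta_j}\qquad\text{for some }\eta\in\TT.
$$
Evaluating at $0$ gives $|\sigma|=\prod_j|\zeta_j|$, so the chain
$$
\log|\sigma|\le c_D(z,\nu)\le l_D(z,\nu)\le\sum_j\log|\zeta_j|=\log|\sigma|
$$
collapses to equalities.

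For the converse, assume $c_D(z,\nu)=l_D(z,\nu)$. Montel applied to $\OO(D,\overline{\DD})$ yields an extremal $f$ for $c_D$, and a minimising sequence $\psi_n\in\OO(\DD,D)$ with $\psi_n(0)=z$, $\psi_n(\zeta_j^n)=p_j$ and $\sum_j\log|\zeta_j^n|\to l_D(z,\nu)$ produces, via Montel and the tautness of $D$ (immediate for the polydisc), a limit $\psi\in\OO(\DD,D)$ with $\zeta_j^n\to\zeta_j\in\DD$ and $\psi(\zeta_j)=p_j$; the $|\zeta_j^n|$ cannot tend to $0$ because $p_j\ne z$, and they cannot tend to $1$ because the corresponding term would drop out and break the sharp Schwarz bound. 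The composition $f\circ\psi$ then sends $0$ to $\sigma$ and vanishes at each $\zeta_j$, and the hypothesis $c_D=l_D$ forces $|\sigma|=\prod_j|\zeta_j|$, which is precisely the equality case of Schwarz applied to $(f\circ\psi)/\prod_j m_{\zeta_j}$. Consequently $f\circ\psi$ is itself a Blaschke product of degree exactly $N$; in order for $f$ to qualify as a left inverse to a $3$-complex geodesic this degree must be at most $2$, so $N\le 2$ and $\psi$ is the desired $3$-complex geodesic with left inverse $f$.

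The principal obstacle is the attainment of the Lempert infimum by an honest analytic disc meeting each $p_j$ at an interior point of $\DD$: landing the limiting image inside $D$ (rather than on $\partial D$) uses tautness of the ambient domain, and keeping the preimages $\zeta_j^n$ away from $0$ and from $\partial\DD$ uses the finiteness of $l_D(z,\nu)$ together with $p_j\ne z$. Once this technicality is dispatched, the lemma reduces to the same Schwarz-lemma dichotomy that drives the rest of the paper, and the restriction $N\le 2$ appears naturally from matching the number of zeros of $f\circ\psi$ against the degree bound built into the definition of a $3$-complex geodesic.
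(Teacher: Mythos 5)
The paper gives no argument for this lemma beyond calling it trivial, and the mechanism you use --- the Schwarz--Pick equality case applied to $f\circ\psi$, which yields $c_D\le l_D$ in general and forces $f\circ\psi$ to equal $\eta\prod_j m_{\zeta_j}$ when equality holds --- is certainly the intended one; your reduction to $\tau=0$ and the chain of inequalities in the easy direction are fine. The real problem is your repeated conclusion that $N\le 2$. The lemma is stated for arbitrary $N$ and is used in the paper precisely with $N=3$: the failure of the Coman conjecture for three equal poles gives $c_D<l_D$, and the (contrapositive of the) lemma is then invoked to conclude that the four-point analogue of Theorem~\ref{main} fails. If the lemma really forced $N\le2$, that application would be vacuous. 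The ``$3$'' in ``$3$-complex geodesic'' must be read as ``$(N+1)$-complex geodesic'', so that $f\circ\psi$ is a Blaschke product of degree $N$ with zeros $\zeta_1,\dots,\zeta_N$; read literally the statement is false for $N\ge3$ (take three poles on a single complex geodesic of $\DD^n$, where $c_D=l_D$ certainly holds). In particular your step ``in order for $f$ to qualify as a left inverse to a $3$-complex geodesic this degree must be at most $2$, so $N\le2$'' is a non sequitur: $N$ is part of the data and cannot be derived from the hypothesis $c_D=l_D$. What your computation actually proves is that $\psi$ is an $(N+1)$-complex geodesic with left inverse $f$; you should say that and flag the misprint rather than shrink $N$.

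There is also a gap in the converse direction where you produce the extremal disc. The claim that $|\zeta_j^n|\to1$ is impossible because ``the corresponding term would drop out and break the sharp Schwarz bound'' is not justified: if the $j$-th preimage escapes to $\TT$, the limiting disc $\psi$ still satisfies $d(\psi)\le\lim_n d(\psi_n)=l_D$ (the $j$-th term contributes $0$ in the limit), so $\psi$ is a legitimate minimiser that simply misses $p_j$, and the Schwarz equality then only yields a Blaschke product vanishing at the surviving $\zeta_k$ --- no contradiction appears, but the geodesic you obtain does not pass through all of the nodes, which is exactly what the conclusion requires. Ruling this out needs a genuine argument, or an added hypothesis guaranteeing that the Lempert infimum is attained by a disc through all of the poles; tautness of $D$ alone gives convergence of the discs but does not keep the preimages away from $\TT$.
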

The proof of this fact is quite trivial. However, the point is that the Coman conjecture does not hold either for $N=3$ and the weights equal in the bidisc or in the case of any bounded domain
(see \cite{Tho 2012} and \cite{Wieg}). As a consequence we get that Theorem~\ref{main} cannot be extended to the case of $N$ points, with $N\geq 4$.
This shows that the case of three points is very special. On the other hand we suspect that it is not a rare phenomenon in the sense that similar properties probably hold for a bigger class of domains.

\bigskip

As already mentioned, the results presented here are new even for the bidisc just to mention that their simple consequence is a well known and definitely non-trivial fact (see \cite{A-Y2001} and \cite{Jar-Pfl 2013}) that all holomorphically invariant functions with one variable lying on the royal variety coincide in the so called symmetrized bidisc, denoted here by $\GG_2$ (the equality on $\GG_2\times \GG_2$ may be also concluded, however of that fact requires more than a two-lines argument). This may stated as follows:
\begin{prop}
Let $D$ be a domain in $\CC^n$ and let $F:\DD^n\to D$ be a proper holomorphic mapping of multiplicity $2$. Let $z$ be any point lying in the locus set of $F$. Then $$c_D (z, \cdot)\equiv l_D(z, \cdot).$$
\end{prop}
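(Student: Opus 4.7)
The plan is to lift the two-point Pick problem on $D$ to a three-point problem on $\DD^n$ through the double cover $F$, apply Theorem~\ref{main} there, and then descend the answer. Since $F:\DD^n\to D$ is proper of multiplicity $2$, it is Galois: there is a holomorphic involution $\rho$ of $\DD^n$ with $F\circ\rho=F$, and the locus set of $F$ equals $F(\mathrm{Fix}(\rho))$. Hence for $z$ in the locus the preimage $p:=F^{-1}(z)$ is a single point of $\mathrm{Fix}(\rho)$, and for a generic $w\in D$ we have $F^{-1}(w)=\{w^+,w^-\}$ with $\rho(w^\pm)=w^\mp$ (the case of $w$ in the locus is analogous).

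The two-point extremal problem $z\mapsto\lambda_0,\ w\mapsto\mu_0$ on $D$ lifts via $F$ to the three-point extremal problem
\[
p\mapsto\lambda_0,\qquad w^+\mapsto\mu_0,\qquad w^-\mapsto\mu_0
\]
on $\DD^n$, and $\rho$-invariant bounded holomorphic functions on $\DD^n$ correspond bijectively to bounded holomorphic functions on $D$. By Theorem~\ref{main}, every interpolating function of the lifted problem is a left inverse to some three-complex geodesic $\tilde\phi:\DD\to\DD^n$ passing through $p,w^+,w^-$; the explicit formulas of Lemmas~\ref{main:bidisc}--\ref{main:poli}, together with the $\rho$-symmetry of the node configuration, allow us to select $\tilde\phi$ and a left inverse $G:\DD^n\to\DD$ that are $\rho$-equivariant, i.e.\ $\rho\circ\tilde\phi=\tilde\phi\circ\tau$ for a Möbius involution $\tau\in\Aut(\DD)$ exchanging the two preimages of $w$ and fixing that of $p$, while $G\circ\rho=G$.

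Quotienting by $\tau$ on the source side (via $\zeta\mapsto\zeta^2$ after an automorphism of $\DD$) and by $\rho$ on the target side then yields an analytic disc $\phi:\DD\to D$ through $z$ and $w$ together with a bounded holomorphic function $g:D\to\DD$ such that $g\circ\phi$ is a Blaschke product of degree one; in other words, $\phi$ is a classical complex geodesic in $D$ with left inverse $g$ realizing the two-point Pick extremal. The standard Lempert-type argument (the one-pole case of the lemma preceding the proposition) gives $c_D(z,w)=l_D(z,w)$, and since $w\in D$ was arbitrary the proof is complete. The technical heart of the argument, and its main obstacle, is the $\rho$-equivariant selection in the previous paragraph: Theorem~\ref{main} only asserts the existence of some left inverse, so one has to use the rigidity of the explicit formulas for $F_{\alpha,\omega,t}$ in Section~\ref{leftinverses} — combined, if needed, with a symmetrization in $G$ and $G\circ\rho$ — to argue that among the candidate three-complex geodesics through $\{p,w^+,w^-\}$ a $\rho$-invariant representative is always available.
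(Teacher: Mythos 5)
The paper gives no proof of this proposition (it is explicitly skipped as "beyond the scope"), so I can only assess your argument on its own terms. Your overall strategy is surely the intended one: use the deck involution $\rho$ of the degree-$2$ map $F$, lift the two-point problem on $D$ to the three-point problem $p\mapsto\sigma$, $w^{\pm}\mapsto 0$ on $\DD^n$, apply Theorem~\ref{main}, and descend. The correspondence between functions on $D$ and $\rho$-invariant functions on $\DD^n$, together with the symmetrization $G\mapsto\tfrac12(G+G\circ\rho)$ (which preserves the interpolation data because $\rho$ fixes $p$ and swaps $w^{\pm}$), does show the lifted problem is extremal. But two steps are genuinely missing. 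First, Theorem~\ref{main} requires non-degeneracy of the lifted problem, which you never verify and which can fail: the subproblem $p\mapsto\sigma$, $w^{+}\mapsto 0$ may itself be extremal in $\DD^n$. That case is in fact easier — the extremal two-point problem in $\DD^n$ is solved only by left inverses of classical complex geodesics, so $G=f\circ F$ is then a left inverse to a $2$-geodesic $\hat\phi$ through $p,w^{+}$, and $F\circ\hat\phi$ is already a disc in $D$ through $z$ and $w$ giving $l_D(z,\delta_w)\le c_D(z,\delta_w)$ with no equivariance needed — but it must be separated out.

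Second, the $\rho$-equivariant selection, which you rightly call the heart of the matter, is asserted rather than proved, and the tool you invoke (rigidity of the formulas $F_{\alpha,\omega,t}$) concerns left inverses, not geodesics, so it does not deliver the claim. The mechanism that does work is Claim~\ref{claim}, i.e.\ the injectivity of $\Phi$ up to the sign flip $(x,y,\alpha)\mapsto(-x,-y,-\alpha)$: take $G=f\circ F$ (automatically $\rho$-invariant, so no separate symmetrization of $G$ is needed), note that $\rho\circ\tilde\phi$ is another normal-form $3$-geodesic through the same nodes compatible with the same extremal data and the same left inverse, and conclude that $\rho\circ\tilde\phi=\tilde\phi\circ\tau$ for a M\"obius map $\tau$; since $\rho^2=\id$ and $w^{+}\ne w^{-}$, $\tau$ is a nontrivial involution fixing the parameter of $p$, which is exactly the equivariance you need. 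Even this argument is only available when the lifted data lies in $\Omega(\DD^n)$; on the lower-dimensional strata the geodesic is far from unique, and that case, the case $w$ in the locus set, and the fact that you treat only $\nu=\delta_w$ while the statement reads $c_D(z,\cdot)\equiv l_D(z,\cdot)$, all still require (short) separate arguments. Once $\rho\circ\tilde\phi=\tilde\phi\circ(-\id)$ is established, your quotient via $\zeta\mapsto\zeta^2$ and the concluding Schwarz--Pick computation are correct.
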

The proof of this fact is a simple consequence of our results and is skipped here as it lies beyond the scope of the paper.

There is also another domain for which the equality of holomorphically invariant metrics is non-trivial -- the tetrablock which, in turn, may be expressed as an image of the classical Cartan domain of the second type in $\CC^{2\times 2}$ under a proper holomorphic mapping of multiplicity $2$ (see \cite{AbouWhiYoun} and \cite{Edi-Kos-Zwo} for details). Thus the properties of both special domains as well as behavior or $3$-extremals in the Euclidean unit ball (see \cite{Kos-Zwo 2014} and \cite{War}) and the fact the Coman conjecture remains true there suggest that a counterpart of Theorem~\ref{main} holds in a bigger class of domains containing among others classical Cartan domains (it is worth pointing out that the Coman conjecture was also proved for the unit ball in $\CC^n$ -- see \cite{Com 2000} and \cite{Edi-Zwo 1998}).

A positive answer to that problem, beyond being interesting on its own (as a solution of the three Pick problem is not known for such a class of domains), would probably provide us with non-trivial domains for which the assertion of Lempert's theorem is satisfied (see \cite{Lem 1981}).

\end{document}